\documentclass[11pt]{amsart}

\pdfoutput=1

\usepackage[T1]{fontenc}
\usepackage[a4paper,top=3.6cm,bottom=3.7cm,inner=2.3cm,outer=2.3cm]{geometry}
\usepackage[pdftex]{graphicx,graphics}
\usepackage{color,ifpdf}
\usepackage{amsmath,amssymb,amsfonts,dsfont,enumerate,url,mathrsfs}
\usepackage[pdftex]{hyperref}
 \usepackage{epsfig}

\usepackage{graphicx,tikz} 
\usetikzlibrary{shapes.symbols}
 \usetikzlibrary{shapes.geometric}
 \usetikzlibrary{shapes.callouts}
 \usetikzlibrary{shapes.misc}
 \usetikzlibrary{shapes.multipart}
  \usetikzlibrary{shapes.arrows}



\newtheorem{theorem}{Theorem}[section]
\newtheorem{proposition}[theorem]{Proposition}

\newtheorem{remark}[theorem]{Remark}
\newtheorem{lemma}[theorem]{Lemma}
\newtheorem{definition}[theorem]{Definition}
\newtheorem{corollary}[theorem]{Corollary}

\newcommand\ep{{\varepsilon}}

\newcommand\de{\delta}
\newcommand\zu{[0,1]}

\newcommand\mk{\medskip}
\newcommand\sk{\smallskip}

\newcommand{\R}{\mathbb{R}}
\newcommand{\N}{\mathbb{N}}
\newcommand{\Z}{\mathbb{Z}}

\newcommand \sjeta{\mathcal{S}_j(\eta)}
\newcommand\thetar{\theta_{\varrho}}

\newcommand\supp{\mathrm{supp}}
\newcommand\si{\sigma}
\newcommand\wM{\widetilde \M}
\newcommand\ro{\varrho}
\newcommand\Mr{ \M_\varrho}
\newcommand\wMr{\widetilde \Mr}
%
\newcommand\dimi{\underline \dim}
\newcommand\dims{\overline \dim}

\newcommand{\M}{\mathsf{M}}

\newcommand\reta{{\varrho\eta}}

\newcommand\lf{\lfloor}
\newcommand\rf{\rfloor}

\newcommand{\locloc}{{}}

%
\begin{document}
\title[Multifractal capacities]{Sparse sampling and dilation operations on a Gibbs weighted tree, and multifractal formalism}

\author{Julien Barral \and St\'ephane Seuret}         

\address{Julien Barral,  Laboratoire d'Analyse, G\'eom\'etrie et Applications, CNRS (UMR 7539), Universit\'e Sorbonne Paris Nord,  F-93430 Villetaneuse, France}
\email{barral@math.univ-paris13.fr} 
\address{St\'ephane Seuret,   Univ Paris Est Creteil, Univ Gustave Eiffel, CNRS, LAMA UMR8050, F-94010 Creteil, France}
\email{seuret@u-pec.fr}

%
 
%
%
\begin{abstract}
In this article, starting from a Gibbs capacity, we build a new random capacity by applying two simple operators, the first one introducing some redundancy and the second one  performing a random sampling.  Depending on the values of the two parameters ruling the redundancy and the sampling, the new capacity has very different multifractal behaviors. In particular,  the multifractal spectrum of the capacity may contain two to four phase transitions, and the multifractal formalism may hold only on a strict subset (sometimes, reduced to a single point) of the spectrum's domain.
\end{abstract}

\maketitle

\section{Introduction}

Multifractal analysis of capacities defined on $\R^d$ ($d\ge 1)$ is a natural  generalisation of multifractal analysis of Radon measures. Our purpose in this article is to perform the multifractal analysis of a family of random capacities parametrised by two indices $\varrho$ and $\eta$, $\varrho $ being a redundancy index and $\eta$ a lacunarity index.  These new families of multifractal objects  are  easy to build, and have a versatile multifractal structure, exhibiting many and diverse phase transitions.

\sk 

Let us  now be more specific. We  consider the set $\mathrm{Cap}([0,1]^d)$ of capacities on $[0,1]^d$, i.e. non-decreasing non-negative mappings $\mu:  \mathcal{B}([0,1]^d) \to \R^+$, where  $\mathcal{B}([0,1]^d)$ stands for the Borel subsets of $[0,1]^d$.  One way to produce such capacities is, starting from   a sequence  $c=(c_I)_{I\in\mathcal D} $ of complex numbers indexed by the set $\mathcal D$ of dyadic subcubes of $[0,1]^d$, to impose some hierarchical structure to $c$ as follows.

\begin{definition}
\label{defM}
For every $c=(c_I)_{I\in\mathcal D} $, the capacity   $\M(c) \in \mathrm{Cap}([0,1]^d)$ is defined by 
$$
\M(c):E\in\mathcal B([0,1]^d)\mapsto \sup\{|c_J|:\, J\in\mathcal D,\, J\subset E\}.
$$ 
\end{definition}

For instance, when $c=(c_I)_{I\in\mathcal D}$ is the collection of wavelet coefficients of a  H\"older continuous function $f:[0,1]\to\R$ (see Section \ref{LWS} for details),  the sequence $(\M(c)(3I))_{I\in\mathcal D}$   where  $3I$ is the union of the cube $I$ and all of its neighboring cubes of same generation,  coincides with the so-called wavelet leaders of $f$. Then,  performing the multifractal analysis of the function $f$ amounts to perform the multifractal analysis of the capacity $(\M(c)(3I))_{I\in\mathcal D}$. Actually,  the model of multifractal capacities considered in this paper, of which the model studied by the authors in \cite{BS2020} is a particular case, was motivated by the   lacunary wavelet series introduced by Jaffard in \cite{JAFF_lac}, see Section \ref{LWS}.

\sk 

In \cite{BS2020}, we defined the following random sparse sampling  on capacities.  A probability space $(\Omega, \mathcal{F},\mathbb{P})$ is fixed for the rest of the paper.
For $j\in\N$, denote by $\mathcal D_j$ the set of dyadic cubes of generation $j$  included in $[0,1]^d$.  We denote by $g(I)$ is the generation of a dyadic cube $I\subset [0,1]^d$, i.e. $g(I)=j$ whenever $I\in \mathcal{D}_j$. By definition, $\mathcal{D}=\bigcup_{j\geq 0} \mathcal{D}_j$.

\begin{definition}
\label{defMeta}
Let $\eta\in (0,1)$ and let  $(p_I)_{I\in\mathcal D}$  be a family of independent random variables such that each $p_I$ is a Bernoulli variable of parameter $2^{-g(I)d(1-\eta)}$. For  every $\mu\in \mathrm{Cap}([0,1]^d)$, one introduces  the sequence $(S_\eta\mu)$ defined as 
$$
(S_\eta\mu)= (S_\eta\mu(I))_{I\in\mathcal D}=(p_I\,\mu(I))_{I\in\mathcal D}.
$$
Then, the random capacity $\M_\eta(\mu)$ is defined as
$$
\M_\eta(\mu):=\M(S_\eta\mu).
$$ 
\end{definition}

To get  $\M_\eta(\mu)$,  one first applies a random sparse sampling to the sequence $(S_\eta\mu)$. Observe that as $j\to\infty$, this sampling operation keeps asymptotically about $  2^{jd\eta}$ among the $2^{jd}$ values $\mu(I)$ for  $I\in \mathcal{D}_j$, and it puts the other values to 0.  Then, from this sparse sequence one gets $\M_\eta (\mu)$ by re-imposing a  hierarchical structure via the operator $\M $ defined above.

When   $\mu(E) =\lambda_\gamma(E):=(\mathcal L^d (E))^\gamma$, where $\mathcal L^d $ stands for the Lebesgue measure and $\gamma>0$, the capacity $\M_\eta(\lambda_\gamma)$ is very similar to the wavelet leaders associated to the lacunary wavelet series considered by Jaffard in~\cite{JAFF_lac}.  While $\lambda_\gamma$ is essentially the simplest monofractal object (in a sense to be precised below) fully supported over $[0,1]^d$, the capacity $\M_\eta(\lambda_\gamma)$ is a rich multifractal object. In particular, it obeys the multifractal formalism in the following sense.

\sk

The topological support of    $\mu\in \mathrm{Cap}([0,1]^d)$  is defined as 
$$
\displaystyle \mathrm{supp}(\mu)=\{x\in[0,1]^d: \, \mu(B(x,r))>0,\ \forall \, r>0\}.
$$

\sk 
If  $\supp(\mu)\neq\emptyset$, set 
\begin{equation}
\label{deftau}
\tau_\mu:\,q\in\R\mapsto\liminf_{j\to\infty} \tau_{\mu,j}(q), \ \ \ \mbox{ where } \  \tau_{\mu,j}(q) :=\frac{-1}{j}\log_2\sum_{I\in \mathcal D_j, \mu(I)>0}\mu(3I)^q. 
\end{equation}
An equivalent definition is 
$$
\tau_\mu(q)=\liminf_{r\rightarrow 0^+}\frac{\log \sup
\sum_{i}\mu(B(x_i,r))^q}{\log(r)}
$$
where the supremum is taken over all  families of disjoint closed balls $%
\{B(x_i,r)\}_{i}$  of radii $r$ with centers $x_{i}\in \mbox{supp}(\mu)$, and $\mu(B(x,r)):=\mu(B(x,r)\cap[0,1]^d)$. This function is concave and non-decreasing, and it takes values in $\R\cup\{-\infty\}$. 

The {\em singularity}, or {\em multifractal}, {\em spectrum} of~$\mu$  is then defined as
$$
\sigma_\mu:H\mapsto \dim \underline E_\mu(H), \quad H\in\R,
$$
where 
$$
\underline E_\mu(H)=\left\{x\in \mathrm{supp}(\mu):\, \liminf_{r\to 0^+} \frac{\log \big(\mu(B(x,r))  \big)}{\log(r)}=H\right\},
$$
and where $\dim$ stands for the Hausdorff dimension,  and by convention, $\dim E= -\infty$ if and only if $E$ is empty. The singularity spectrum  provides a fine geometric description of $\mu$ at small scales by giving the Hausdorff dimension of the iso-H\"older sets $\underline E_\mu(H)$. Note that since~$\mu$ is a capacity, the set $\underline E_\mu(H)$ is empty when $H<0$. 
It is always true (see \cite{BRMICHPEY,Olsen,Riedi,LN} for measures and \cite{JLVVOJAK} for capacities), that 
\begin{equation}
\label{formalism-1}
\sigma_\mu(H)\le \tau_\mu^*(H),  \ \  \forall \, H\in \R,
\end{equation}
where for any function $f:\mathbb{R}\to \mathbb{R}\cup\{-\infty\}$, the Legendre transform $f^*$ of $f$ is defined as 
$$
f^*:H\in\mathbb{R}\mapsto \inf\{Hq-\tau_\mu(q):\, q\in\mathbb R\}.
$$
The domain of such a function $f$, that is the set of points where it takes real values, will be denoted by $\mathrm{dom}(f)$. 

The multifractal formalism holds for $\mu$ at $H$ when the   inequality \eqref{formalism-1} turns out to be  an equality. It holds for $\mu$ when  \eqref{formalism-1} is an equality for all $H\in\R$. 

A capacity $\mu$ is monofractal  when $\tau_\mu$ is linear or more generally if $\underline E_\mu(H_0)=\supp(\mu)$ for some $H_0\in\R_+$, and   $\mu$ is called multifractal when  there are at least two distinct non-negative exponents $H$ and $H'$ such that  $\underline E_\mu(H)\neq\emptyset\neq \underline E_\mu(H')$.

Coming back to the capacity  $\lambda_\gamma(E)=(\mathcal L^d (E))^\gamma$, for $\gamma>0$, it is clear that 
\begin{align*}
\sigma_{\lambda_\gamma}(H)
=\begin{cases} \ \ d&\text{if }H=d\gamma\\
-\infty&\text{otherwise}
\end{cases} \ \ \mbox{ and } \ \ 
\tau_{\lambda_\gamma}(q)=d(\gamma q-1), q\in\R.
\end{align*}
Consequently, $\lambda_\gamma$ is monofractal and  it obeys the multifractal formalism. The situation is more involved for    $\M_\eta (\lambda_\gamma)$: it is shown in \cite{BS2020} (see also \cite{JAFF_lac} for a slightly different model) that almost surely, %
\begin{align*}
\sigma_{\M_\eta (\lambda_\gamma)}(H) = 
\begin{cases}\frac{\eta}{\gamma} H&\text{if } H\in [d\gamma,d \gamma/\eta]\\
-\infty&\text{otherwise},
\end{cases}
\ \mbox{ and } \ 
\tau_{\M_\eta (\lambda_\gamma)}(q) =
\begin{cases}\eta^{-1}d(\gamma q-\eta)&\text{if }q<1/\gamma\\
 d(\gamma q-\eta)&\text{if }q\ge1/\gamma
 \end{cases}
\end{align*}
Thus, $\M_\eta (\lambda_\gamma)$ is multifractal, and it  also obeys the multifractal formalism.

\begin{tikzpicture}[xscale=2.5,yscale=2.5]
{\tiny
\draw [->] (0,-0.2) -- (0,1.2) [radius=0.006] node [above] {$\sigma_{\lambda_\gamma}(H)$};
\draw [->] (-0.2,0) -- (2.1,0) node [right] {$H$};
 
\draw [dotted]   (0,1) --  (0.7,1) ;
 
%
%

 \draw[fill,color=blue] (0.7,1) circle [radius=0.03];
 \draw [dashed] (0.7,1) -- (0.7,-0.0);
 \draw [fill] (0.7,-0.0) circle [radius=0.03] node [below]  {$d\gamma$};

 \draw [fill] (-0.1,-0.10)   node [left] {$0$}; 
 \draw [fill] (0,1) circle [radius=0.03] node [left] {$d \ $};

  }
\end{tikzpicture} 
   
\vskip -32.5mm 
%
%
%
%
%

\vskip -7.7mm 
\hskip 70mm     
   \begin{tikzpicture}[xscale=2.5,yscale=2.5]
{\tiny
\draw [->] (0,-0.2) -- (0,1.2) [radius=0.006] node [above] {$\sigma_{\M_\eta(\lambda_\gamma)}(H)$};
\draw [->] (-0.2,0) -- (2.9,0) node [right] {$H$};

%
%

 \draw[fill] (1,0.4) circle [radius=0.03] [dashed] (1,0.4) -- (1,-0.0) [fill] circle [radius=0.03]  
 node [below]  {$d\gamma$};
 \draw[fill] (2.5,1) circle [radius=0.03] [dashed] (2.5,1) -- (2.5,-0.0) [fill] circle [radius=0.03] 
  node [below]  {$d\gamma/\eta$};


\draw [thick,color=blue]    (1,0.4) -- (2.5,1); 
\draw [dotted,color=blue]   (0,0) --  (1,0.4) ;
\draw [dotted]   (0,0.4) --  (1,0.4) ;
\draw [dotted]   (0,1) --  (2.5,1) ;
 
 \draw [fill] (-0.1,-0.10)   node [left] {$0$}; 
 \draw [fill] (0,1) circle [radius=0.03] node [left] {$d\ $}; 
 \draw [fill] (0,.4) circle [radius=0.03] node [left] {$d\eta \ $}; 
} 
\end{tikzpicture} 
\sk 

Let us now introduce some redundancy in the constructions.

\begin{definition}
For $\varrho\in (0,1]$, $j\in \N$ and $I\in\mathcal D_j$, let $I^\varrho$ denote the   unique dyadic cube in $\mathcal D_{\lfloor \varrho j\rfloor}$  of sidelength $2^{-\lfloor \ro j\rfloor} $  containing $I$.

 The $\varrho$-dilation operation on $\mathrm{Cap}([0,1]^d)$ is defined by 
$$
D_\varrho \mu=\M(c_\varrho(\mu)), \text{ where }
c_\varrho(\mu)=(\mu(I^\varrho))_{I\in\mathcal D}.
$$
\end{definition}
Note that $D_1$ is the identity map. 

When $\ro<1$, some redundancy is introduced in $D_\varrho \mu$, since each $I\in \mathcal{D}_j$ shares the same cube $I^\varrho$ with the $2^{j-\lfloor \varrho j\rfloor}$ dyadic subcubes of $I^\varrho$ of generation $j$. In addition, observe that  especially if the measure $\mu$ is doubling, the measures $\mu(I^\varrho) $ and $\mu((I')^\varrho)$ for two neighbor cubes  $I$ and $I'$ are very comparable.

Remark  that the family $(\lambda_\gamma)_{\gamma>0}$ possesses the rather exceptional property that  $D_\varrho \lambda_\gamma$ still obeys the multifractal formalism. Indeed,  $D_\varrho \lambda_{\gamma}\approx  \lambda_{\gamma\varrho}$, where for two capacities $\mu$ and $\mu'$, $\mu\approx\mu'$ means that there exists $C\geq 1$ such that $C^{-1} \mu(I)\le \mu'(I)\le C\mu(I)$ for all $I\in \mathcal D$.   

If $\mu\in\mathrm{Cap}([0,1]^d)$ and $\supp(\mu)\neq\emptyset$,  one checks that 
$$
 \sigma_{D_\varrho \mu}(H)=\sigma_\mu(H/\varrho),\forall \, H\in\R
 $$
  and that if $\supp(\mu)=[0,1]^d$, 
  $$\tau_{D_\varrho \mu}(q)=\varrho \tau_\mu(q)+d(\varrho-1), \ \forall\, q\in\R,
$$
whose Legendre transform equals $\varrho \tau_\mu^*(\cdot/\varrho)+d(1-\varrho)$. In particular, if $\mu$ obeys the multifractal formalism, then $\sigma_{D_\varrho \mu}=\tau_\mu^*(\cdot/\varrho)$, so  that ${D_\varrho \mu}$ obeys the multifractal formalism if and only if  $\tau_\mu^*(\cdot/\varrho)= \tau_{D_\varrho \mu}^*=\varrho \tau_\mu^*(\cdot/\varrho)+d(1-\varrho)$, i.e. $\tau_\mu^*$ takes its values in $\{d,-\infty\}$ if $\varrho<1$. Assuming that $\tau_\mu'(0)$  exists, and setting $\gamma=\tau_\mu'(0)/d$,  this equality implies that necessarily $\tau_\mu=\tau_{\lambda_\gamma}$ and $\sigma_\mu=\sigma_{\lambda_\gamma}$. So, as soon as $\mu$ is multifractal and $\tau_\mu$ is differentiable at 0, the capacity $D_\varrho \mu$ for  $\varrho<1$ cannot obey the multifractal formalism when $\mu$ does. 

\sk 

Let us introduce a notation (that we already used to define $\lambda_\gamma = (\mathcal{L}_d)^\gamma$):  for any $\gamma>0$ and $\mu\in \mathrm{Cap}([0,1]^d)$, the capacity $\mu^\gamma$ is defined by  
\begin{equation}
\label{defpower}
 \mu^\gamma  :E\in\mathcal B([0,1]^d)\mapsto\mu(E)^\gamma.
\end{equation}

Using    the previous observations, one sees that for any $\mu\in \mathrm{Cap}([0,1]^d)$, $\gamma>0$ and $\varrho\in (0,1]$,  that $ (D_\varrho(\mu))^\gamma =D_\varrho  (\mu^\gamma)$, and
$$
\sigma_{D_\varrho  (\mu^\gamma)}=\sigma_\mu(\cdot/\gamma\varrho),\quad \tau_{D_\varrho \mu}(q)=\varrho \tau_\mu(\gamma\cdot)+d(\varrho-1), \quad \text{and} \quad \sigma_{D_\varrho(\mu)}=\sigma_{(\mu^\varrho)}.
$$
It is clear that $\mu^\gamma$ satisfies the multifractal formalism as soon as $\mu$ does.   Also, if $\mu \in\mathrm{Cap}([0,1]^d)$  satisfies the   three properties $(i)$ $\mu$ obeys the multifractal formalism, $(ii)$ $\mu$ is multifractal, and $(iii)$ $\tau_\mu$ is differentiable at 0, then the capacities $D_\varrho  (\mu^{1/\varrho})$,  $0<\varrho<1$,  share the same singularity spectrum as $\mu$ but none of them obeys the multifractal formalism. 

\sk 

In this article, we are interested in combining   $D_\varrho$ and $\M _\eta$. 
\begin{definition}
\label{defMetarho}
For every $\mu \in \mathrm{Cap}([0,1]^d)$, for every $\eta \in (0,1]$ and $0<\ro \leq 1/\eta$,   the capacity $\M_{\ro,\eta}(\mu)$ is defined by
$$
\M_{\ro,\eta}(\mu)=\M_\eta (D_{\varrho\eta} \mu)=\M(S_\eta (D_{\varrho\eta} \mu)).
$$
\end{definition}

 The  two operators are somehow complementary: one one side, $D_{\varrho\eta}$ creates redundancy since $\varrho\eta$ is always less than 1, while on the other side  $\M _\eta$ introduces lacunarity. 
 
Our goal is to study  the multifractal nature of  the elements of $\{M_{\ro,\eta}(\mu): 0<\varrho\le 1/\eta\}$ when $\mu$ is  a \textit{Gibbs capacity}, defined as follows.

Let  $\varphi:\R^d\to \R$ be $\Z^d$-invariant  H\"older continuous potential.  It is standard that 
the sequence of Radon measures 
$$
\nu_n(\mathrm{d}x)= \frac{\displaystyle\exp\left (S_n\varphi(x)\right)}{\int_{[0,1]^d}  \exp\left (S_n\varphi(t)\right)\mathcal{L}^d(\mathrm{d}t)}\, \mathcal{L}^d(\mathrm{d}x), \quad \text{where }S_n\varphi(x)=\sum_{k=0}^{n-1} \varphi(2^n x),
$$
converges vaguely to a measure fully supported on $\R^d$ (see \cite{Bowen}). Denote by  $\nu$  the restriction of this \textit{Gibbs} measure to~$[0,1]^d$. 
\begin{definition}
\label{defgibbscap}
A Gibbs capacity $\mu$ is a capacity $\mu\in\mathrm{Cap}([0,1])^d$ such that  $\mu\approx \nu^\gamma$, where    $\gamma>0$  and $\nu$ is such a Gibbs measure on $\zu^d$ as above. 
\end{definition}

Classical results on Gibbs measures (see \cite{Ruelle,ColletLebPor,Olsen}) give :
\begin{itemize}
\item One has
$$
\tau_{\nu}(q)=\big (qP(\varphi)-P(q\varphi)\big )/\log(2),\quad q\in\R
$$
where $
P( q\varphi)=\lim_{n\to+ \infty}\frac{1}{n}\log \int_{[0,1]^d} 2^n \exp\left (S_n (q\varphi)(x)\right)\mathcal{L}^d(\mathrm{d}x)
\in\R $ is the topological pressure of $ q\varphi $.  In particular $\tau_\nu=\lim_{j\to\infty}\tau_{\nu,j}$.
 
\item $\tau_{\nu}$ is analytic, concave increasing and $\lim_{q\to +\infty} \tau_\nu(q) = +\infty$. 
\item

$\nu$ satisfies the multifractal formalism: $\sigma_{\nu}=\tau_{\nu}^*$.
\end{itemize}
Hence, the same properties are true for  any Gibbs capacity $\mu\approx\nu^\gamma$ with $\gamma>0$.

The family of Gibbs capacities is obviously invariant under the operator $\mu \mapsto \mu^\gamma$. It is known that a Gibbs capacity $\mu\approx\nu^\gamma$ is multifractal if and only if the Gibbs measure $\nu$ is not equivalent to the Lebesgue measure, that is when $\varphi$ is not cohomologous to a constant on  $\R^d/\Z^d$ endowed with the dynamics $\times 2$ (see \cite{Bowen}).

\sk

The situation studied in \cite{BS2020}\footnote{In \cite{BS2020} we considered capacities of the form $\mu(I)=K \nu(I)^\alpha 2^{-\beta j}$ for $I\in \mathcal D_j$, with $K>0$, $\alpha\ge 0$, $\beta\ge 0$ and $\alpha+\beta>0$, which is essentially equivalent to assuming $\mu\approx \nu^\gamma$ upon modifying $\varphi$.} corresponds to the extreme  case $\varrho=1/\eta$, that is  $\M_{\ro,\eta}(\mu)= \M(S_\eta(D_1\mu))=\M _\eta(\mu)$. There, we proved that $\M _\eta (\mu)$ still satisfies the multifractal formalism. In the present  paper, we complete the previous results for a Gibbs capacity $\mu$   as follows:
\begin{itemize}
\item when $\varrho\in [1,1/\eta)$, $\M_{\ro,\eta}(\mu)$ obeys the multifractal formalism (although $D_{\varrho\eta}(\mu)$ does not when $\mu$ is multifractal) - see Theorem \ref{main3}  ;
\item
when $0<\varrho<1$ and  $\mu$ is multifractal,    $\M_{\ro,\eta}$  obeys the multifractal formalism only on a strict subset of $\mathrm{dom}(\sigma_{M_{\ro,\eta}})$  - see Theorems \ref{main} and  \ref{main2}.
\end{itemize}
As a consequence of our study, even if all the capacities $D_\varrho (\mu^{1/\varrho})$, for $0<\varrho<1$, share the same singularity spectrum (by the above observations), the capacities $\M_\eta (D_\varrho    (\mu^{1/\varrho})) $,  $0<\varrho<1$, have distinct singularity spectra, and obey the multifractal formalism if and only if $\varrho\in[\eta,1]$. 
 
In this article, we  will focus on the cases $\varrho=1$ and $0<\varrho<1$, since these are the situations where new interesting phenomena occur. We only give a statement for $1<\varrho\le 1/\eta$ since the result and its proof can be  obtained by adapting the proofs of the case $\varrho=1/\eta$ considered in~\cite{BS2020}.

Before stating precisely our results, it is worth saying a word about  the properties of the $L^q$ and singularity spectra of  $\M_{\ro,\eta}(\mu)$ when $\mu$ is Gibbs. While $\tau_\mu$ is analytic,  it will be  shown that for any $\varrho\in (0,1/\eta]$, $\tau_{\M_{\ro,\eta}(\mu)}$ always presents a first order phase transition (i.e.~a point of non differentiability), and in some cases even a second order phase transition (point of differentiability but not twice differentiability). Moreover,  $\tau_{\M_{\ro,\eta}(\mu)}$ is analytic   outside its phase transition(s) point(s). Also, while $\sigma_{\mu}$ is analytic over the interior of its domain  when $\varrho\in (0,1/\eta]$, this is not the case for $\sigma_{\M_{\ro,\eta}(\mu)}$. Indeed, $\sigma_{\M_{\ro,\eta}(\mu)}$ is still  concave but may contain three to five consecutive analytic parts, and can be non differentiable at some point. 
 
\section{Statement of the main results}

A multifractal Gibbs capacity $\mu$ is fixed, as well as  $\eta\in (0,1)$. Set $H_{\min}=\tau_\mu'(+\infty)$ and $H_{\max}=\tau_\mu'(-\infty)$. 
We start with the case $\varrho=1$, which is both the simplest one and a transition between the case $\varrho\in(1,1/\eta]$,  for which the multifractal behavior of $M_{\ro,\eta}(\mu)$ is similar to that in the case $\varrho=1/\eta$ studied in \cite{BS2020} and the multifractal formalism holds, and the case $\varrho\in(0,1)$, which is more versatile and for which the multifractal formalism holds only partially. 
 
\subsection{\bf The case $\varrho=1$}  This situation corresponds to the case where there is an almost compensation between the redundancy process and the random sampling. Indeed, with this choice of parameters, every $\mu(I)$ with $I\in \mathcal{D}_j$ is replaced by  $\mu(I^\eta)$, but only $\sim 2^{jd\eta}$ of them are kept. Since the surviving cubes are chosen  uniformly in $\zu^d$, intuitively the surviving cubes $I$  after sampling are such that the union of the corresponding cubes $I^\eta$ cover $\zu^d$ (this is not true, but not far from it).

\begin{theorem}
\label{main} Let $\mu$ be a multifractal Gibbs capacity, and $\eta\in (0,1)$. Let $  q_1$ be the unique solution of the equation
$\tau_\mu(q)=0$, and let $  H_1= \tau_\mu'(  q_1)$. Equivalently, $   q_1=\sigma_\mu'(  H_1)$.

 With probability 1,  $\M_{1,\eta}(\mu)$ satisfies the multifractal formalism with 
\begin{equation}
\label{spec-M1}
 \sigma_{\M_{1,\eta}(\mu)}(H) = \begin{cases} \      \eta \sigma_\mu(H/\eta) &  \mbox{ if }  \ \eta H_{\min}\le  H < \eta  {H_1} ,\\ 
\  \displaystyle  \frac{ \sigma_{\mu}(H_1) }{H_1} H & \mbox{ if }  \ \eta {H_1} \leq  H< {H_1},  \\   
    \ \ \ \sigma_\mu (H)  & \mbox{ if }   {H_1}\le H<  H_{\max} . \end{cases}
\end{equation}
and $\sigma_{\M_{1,\eta}(\mu)}(H) =-\infty$ otherwise. Also,  
\begin{equation}
\label{tau-M1}
\tau_{\M_{1,\eta}(\mu)}(q)=\sigma_{\M_{1,\eta}(\mu)}^*(q)=
\begin{cases}
\ \tau_\mu(q)&\text{if }q< {q_1},\\
\eta\tau_\mu(q)&\text{if }q\ge   {q_1}.
\end{cases}
\end{equation}
\end{theorem}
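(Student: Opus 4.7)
The plan is to compute $\tau_{\M_{1,\eta}(\mu)}$ directly from its definition, and then to construct in each regime of $H$ an auxiliary measure witnessing the claimed Hausdorff dimension of $\underline E_{\M_{1,\eta}(\mu)}(H)$. A direct Legendre transform of the two-piece $\tau$ (using $\sigma_\mu(H_1)=q_1 H_1$) recovers the three-piece spectrum, and the general upper bound $\sigma\le\tau^*$ furnishes the matching inequality, giving the multifractal formalism. The structural observation underlying everything is that for $I\in\mathcal D_j$,
\begin{equation*}
\M_{1,\eta}(\mu)(3I)=\max\bigl\{\mu(J^\eta):J\in\mathcal D_k,\ k\ge j,\ J\subset 3I,\ p_J=1\bigr\}
\end{equation*}
realizes two competing mechanisms: a \emph{generic} one, where the first selected descendant lies at level $k\sim j/\eta$ so that $J^\eta\in\mathcal D_{\lfloor\eta k\rfloor}$ has scale $\sim 2^{-j}$ and $\mu(J^\eta)\sim\mu(I)$; and a \emph{boosted} one, active at rare levels where $I$ itself (or a nearby level-$j$ cube) is selected, yielding the much larger value $\mu(I^\eta)$.

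To compute $\tau_{\M_{1,\eta}(\mu)}(q)$ for $q\ge 0$, the inequality $(\max)^q\le\sum$ gives $\sum_I\M_{1,\eta}(\mu)(3I)^q\le C\sum_{k\ge j}\sum_{J\in\mathcal D_k,\,p_J=1}\mu(J^\eta)^q$, whose expectation equals $C\sum_{k\ge j}\sum_{K\in\mathcal D_{\lfloor\eta k\rfloor}}\mu(K)^q\asymp\sum_{k\ge j}2^{-\eta k\tau_\mu(q)}$. For $q>q_1$ (so $\tau_\mu(q)>0$) this geometric sum is dominated by $k=j$, giving order $2^{-\eta j\tau_\mu(q)}$; Chernoff/variance estimates on the independent family $(p_I)$ combined with Borel--Cantelli along a dyadic subsequence promote this to an a.s.\ bound. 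A matching lower bound at $k=j$ (restricting to the boosted mechanism alone) yields $\tau_{\M_{1,\eta}(\mu)}(q)=\eta\tau_\mu(q)$. For $q<q_1$, the generic mechanism dominates: for most $I\in\mathcal D_j$ one has $\M_{1,\eta}(\mu)(3I)\asymp\mu(I)$, so $\sum_I\M_{1,\eta}(\mu)(3I)^q\asymp\sum_I\mu(I)^q\asymp 2^{-j\tau_\mu(q)}$. Negative $q$ is handled by the same generic estimate: the lower bound $\M_{1,\eta}(\mu)(3I)\gtrsim\mu(I)$ translates, after raising to the negative power $q$, into an upper bound on $\M_{1,\eta}(\mu)(3I)^q$ and hence on the full sum.

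For the spectrum lower bound, for $H\in[H_1,H_{\max})$ I use the Gibbs-type measure $\nu_H$ on $\underline E_\mu(H)$ of dimension $\sigma_\mu(H)$: independence of $(p_J)$ ensures that for $\nu_H$-a.e.\ $x$, the first selected descendant in $B(x,2^{-j})$ lies at level $\sim j/\eta$ with $\mu(J^\eta)\sim 2^{-jH}$, giving $\M_{1,\eta}(\mu)$-exponent $H$; ruling out deeper boosts is a Borel--Cantelli argument on $(p_I)$. For $H\in[\eta H_{\min},\eta H_1)$, starting from $\nu_{H/\eta}$ I restrict to the random subset of points whose level-$j$ cube is selected along a prescribed sparse subsequence; following the strategy of \cite{BS2020} this yields a random sub-measure of Hausdorff dimension $\eta\sigma_\mu(H/\eta)$ on which the boosted mechanism produces $\M_{1,\eta}(\mu)$-exponent $H$ at boost levels, while non-boost levels contribute the larger exponent $H/\eta$, so the liminf equals $H$. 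For the linear middle piece $H\in[\eta H_1,H_1)$, I parametrize $H=\eta\alpha H_1$ with $\alpha\in[1,1/\eta)$ and target points $x\in\underline E_\mu(H_1)$ whose ``first selection depth'' $\beta(j):=k_{\min}(j)/j$ satisfies $\liminf_j\beta(j)=\alpha$; the corresponding $\M_{1,\eta}(\mu)$-exponent is exactly $\eta\alpha H_1=H$.

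The main obstacle is constructing the measure witnessing the linear piece's dimension $q_1 H$. The boost event at the intermediate level $\alpha j$ has probability $\asymp 2^{jd(\alpha\eta-1)}$ per point, vanishing for $\alpha<1/\eta$, so a naive restriction of $\nu_{H_1}$ yields a set of dimension strictly smaller than $q_1 H$. One must instead build a sparse random sub-tree interleaving the Gibbs-type structure of $\mu$ at scale $\eta\alpha j$ with the Bernoulli selection at scale $\alpha j$, and evaluate the Hausdorff dimension of its attractor by a Frostman/energy argument adapted from \cite{BS2020}. Controlling $\liminf$ rather than $\limsup$ additionally requires excluding deeper boosts at $\alpha'<\alpha$, again via Borel--Cantelli on $(p_I)$. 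Uniformity in $H$ is achieved by first working with a countable dense set of parameters and extending by the continuity of $\sigma_{\M_{1,\eta}(\mu)}$ on its domain.
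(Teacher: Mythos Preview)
Your computation of $\tau_{\M_{1,\eta}(\mu)}$ is essentially the paper's argument: the expectation bound for $q>0$ plus Borel--Cantelli, and the deterministic lower bound $\M_1(I_w)\gtrsim\mu(I_w)$ for $q<0$. The paper only proves $\tau_{\M_1}\ge T$ and recovers equality \emph{a posteriori} from $\sigma_{\M_1}=T^*$, but your two-sided argument is fine.

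The real divergence is in the spectrum lower bound. The paper does \emph{not} build the auxiliary measures by hand; it invokes the heterogeneous ubiquity theorem of \cite{BSubiquity2} as a black box. Given the covering property \eqref{cover1}, that theorem produces, for any $H'$ with $\sigma_\mu(H')>0$ and any $\delta\ge 1$, a measure $m_{H',\delta}$ of lower Hausdorff dimension $\sigma_\mu(H')/\delta$ carried by the limsup set $\mathcal A_{H',\widetilde\xi,\delta}$ of points $\delta$-well approximated by surviving vertices $w$ with $\mu(I_w^\eta)\sim|I_w^\eta|^{H'}$. All three increasing pieces then fall out by choosing $(H',\delta)$: for $H\in[\eta H_{\min},\eta H_1]$ take $(H/\eta,\,1/\eta)$; for the linear piece $H\in(\eta H_1,H_1]$ take $(H_1,\,H_1/H)$, giving dimension $\sigma_\mu(H_1)/(H_1/H)=q_1H$ directly; for $H\ge H_1$ use the Gibbs-type measures $\mu_H$ together with the fact (from \cite{BS2020}) that $\mu_H$-a.e.\ point has approximation rate $\delta_x=1$. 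In each case one combines $m_{H',\delta}(\underline E^{\le}_{\M_1}(H))>0$ with $m_{H',\delta}(\underline E^{\le}_{\M_1}(H-1/n))=0$ (via the already-established upper bound) to land on the level set $\underline E_{\M_1}(H)$.

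Your hand-built constructions are aiming at the same targets, but the linear piece is exactly where you run into trouble: you correctly observe that restricting $\nu_{H_1}$ to points with a prescribed first-selection depth $\alpha$ loses too much mass, and your proposed fix (an interleaved random sub-tree with a Frostman argument) is precisely what the ubiquity theorem packages. So your sketch is not wrong, but it amounts to re-deriving the content of \cite{BSubiquity2} inside the proof. The paper's route is considerably shorter and treats the three increasing regimes uniformly, whereas yours requires three separate constructions of increasing difficulty and leaves the hardest one only outlined.
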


The continuity of $ \sigma_{\M_{1,\eta}(\mu)}$ can be checked directly on the formula. The intermediate linear phase is the unique interpolation between $H\mapsto    \eta \sigma_\mu(H/\eta) $ and $H\mapsto \si_\mu(H)$ which makes the resulting spectrum concave.

\subsection{The case $0<\varrho<1$} This situation, in which there is a lot of redundancy in the values $\mu(I^\varrho)$, is quite versatile and involves some extra parameters. 
The existence of the  parameters introduced below, as well as  their justification and their properties, will be studied in Section \ref{pfth2}.

\sk

$\bullet$ 
Let  $ \theta_\varrho$ be the mapping defined as
$$
 \theta_\varrho: H\in[H_{\min},\tau_\mu'(0)]\mapsto \frac{\varrho H{\sigma_\mu(H)}}{ {d(1-\varrho)+\varrho\sigma_\mu(H)}}.
 $$
 It  increases continuously from $\frac{\varrho H_{\min}\sigma_\mu(H_{\min})} {d(1-\varrho)+\varrho\sigma_\mu(H_{\min})} $ to $\varrho \tau_\mu'(0)$.   
 
\sk$\bullet$ Define ${q_\ro}$ as the unique $q\in \R$  such that $\tau_\mu(q)=\frac{d(1-\varrho)}{\varrho}$ and set ${H_\ro}=\tau_\mu'({q_\ro})$.

\mk

Let us now define a mapping, that will be the multifractal spectrum of $\M_{\ro,\eta}(\mu)$. 
  
\begin{definition}\label{defDetarho}
 
\begin{enumerate}
\item Suppose that $\sigma_\mu(H_{\min}) > \frac{d(1-\varrho)}{1/\eta-\varrho}$. Then set
\begin{align}
\label{spec1}
D_{\mu,\ro,\eta}(H)
= \begin{cases}
\eta (d(1-\varrho)+\varrho \sigma_\mu(H/\varrho\eta))&\text{if } \varrho \eta H_{\min}\le H<  \varrho \eta {H_\ro}
,\\
  \ \ \ \ \   \  \ \displaystyle  \frac{\sigma_\mu({H_\ro})}{\theta_\ro ({H_\ro})} H&\text{if }   \varrho \eta {H_\ro} \le H< \thetar ({H_\ro}),\\
 \ \ \ \ \  \sigma_\mu(\thetar^{-1}(H))&\text{if }\thetar ({H_\ro}) \le H< \varrho\tau_\mu'(0),\\
  \ \ \ \  \ \ \ \sigma_\mu(H/\varrho)&\text{if } \varrho \tau_\mu'(0)\le H\le \varrho H_{\max},
  \end{cases}
\end{align}
and $D_{\mu,\ro,\eta}(H)=-\infty$ otherwise. 

\medskip

\item Suppose that  $\sigma_\mu(H_{\min})\le  \frac{d(1-\varrho)}{1/\eta-\varrho}$.   Let $H_{\ro,\eta}  \in [H_{\min},\tau_\mu'(0)]$ be the unique solution to    $$\sigma_\mu(H_{\ro,\eta})=\frac{d(1-\varrho)}{1/\eta-\varrho}.$$
This can be equivalently rewritten as  $ \ro\eta H_{\ro,\eta}=\theta_\ro(H_{\ro,\eta})$
\sk
\begin{enumerate}
\item If $H_{\ro,\eta}<  {H_\ro}$, then  set
\begin{align}
\label{spec2}
D_{\mu,\ro,\eta}(H)
= \begin{cases}
  \ \ \ \ \ \ \ \sigma_\mu(H/\varrho\eta)&\text{if }  \varrho \eta H_{\min}\le  H<  \varrho \eta H_{\ro,\eta},\\
\eta (d(1-\varrho)+\varrho \sigma_\mu(H/\varrho\eta))&\text{if } \varrho \eta H_{\ro,\eta} \le H<  \varrho \eta {H_\ro}
,\\
  \ \ \ \ \ \  \ \  \displaystyle\frac{\sigma_\mu({H_\ro})}{\theta_\ro ({H_\ro})}H&\text{if }   \varrho \eta {H_\ro} \le H< \thetar ({H_\ro}),\\
  \ \ \ \ \ \   \sigma_\mu(\thetar^{-1}(H))&\text{if }\thetar ({H_\ro}) \le H< \varrho\tau_\mu'(0),\\
  \ \ \ \ \ \  \ \  \sigma_\mu(H/\varrho)&\text{if } \varrho \tau_\mu'(0)\le H\le \varrho H_{\max},
  \end{cases}
\end{align}
and $D_{\mu,\ro,\eta}(H)=-\infty$ otherwise.

\begin{center}
\begin{figure}

\includegraphics[scale=0.23]{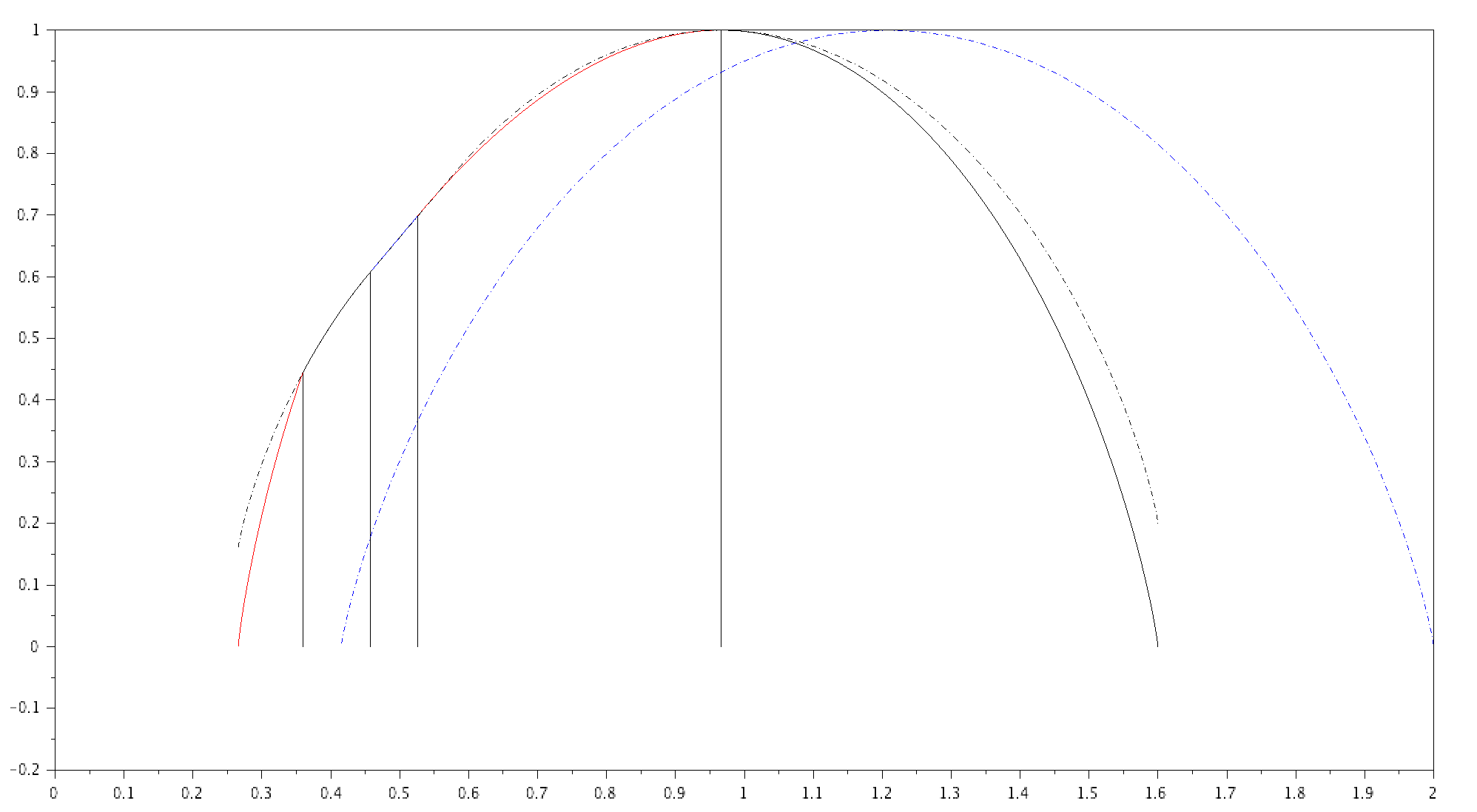}
\includegraphics[scale=0.21]{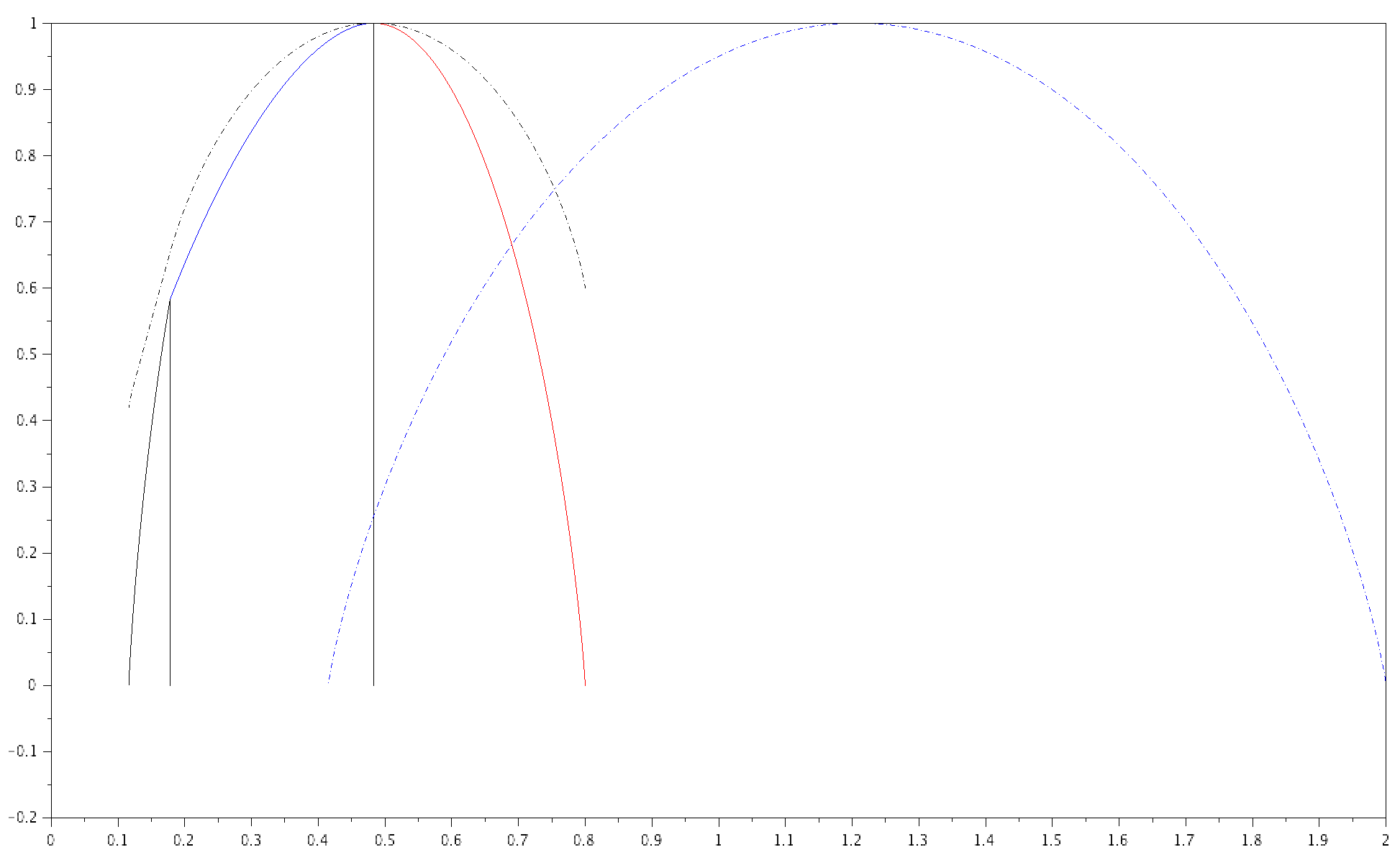}

 \caption{ {\bf Left:}   Spectrum of $\M_{\ro,\eta}(\mu)$ for a given spectrum of $\mu$ when  $\ro=0.8$, $\eta=0.8$, case {\bf 2a}: 5 phases. {\bf Right:}  Spectrum when  $\ro=0.7$, $\eta=0.4$, case {\bf 2b}: 3 phases. Original spectrum in dashed blue, Legendre spectrum in dashed black.}
 \end{figure}
 \end{center}

\item If $H_{\ro,\eta}\ge  {H_\ro}$, then  set
\begin{align}
\label{spec3}
D_{\mu,\ro,\eta}(H)= \begin{cases}
 \  \sigma_\mu(H/\varrho\eta)&\text{if }  \varrho \eta H_{\min}\le H<  \varrho\eta H_{\ro,\eta},\\
 \sigma_\mu(\thetar^{-1}(H))&\text{if } \varrho\eta H_{\ro,\eta}\le H< \varrho\tau_\mu'(0), \\
\ \sigma_\mu(H/\varrho)&\text{if } \varrho \tau_\mu'(0)\le H\le \varrho H_{\max} , \end{cases}
\end{align}
and $D_{\mu,\ro,\eta}(H)=-\infty$ otherwise. 

\end{enumerate}
\end{enumerate}
\end{definition}

Observe that when $H_{\ro,\eta}=H_\ro$, then $\ro\eta H_{\ro,\eta} = \ro\eta H_\ro=\theta_\ro(H_\ro)$, and the second and third intervals in \eqref{spec2} merge to a single point. These intervals do not matter in \eqref{spec3} when  $H_{\ro,\eta}\ge  {H_\ro}$.

The formulas are not   easy to handle with, but they are driven by the model. In particular, in each case there are different phases. The continuity of  $D_{\mu,\ro,\eta}$ can be checked  on the formulas, and its other properties  (differentiability, concavity) are studied  in Section \ref{pfth2} and  gathered in the next proposition. One can already notice that in \eqref{spec1} and \eqref{spec2} there is a linear part in the spectrum.

\begin{proposition}\label{regularityDetarho}For all $\varrho\in(0,1)$, the function $D_{\mu,\ro,\eta}$ is concave, continuous, and  analytic on every open   interval    given by the different formulas in Definition~\ref{defDetarho}. Moreover, $D_{\mu,\ro,\eta}$ is differentiable, except at $\varrho\eta H_{\ro,\eta}$ in cases (2a) and (2b) of Definition~\ref{defDetarho}. 
\end{proposition}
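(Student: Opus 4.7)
The plan is to reduce the four assertions of the proposition to explicit algebraic identities about $\thetar$, together with the analyticity and strict concavity of the Gibbs spectrum $\sigma_\mu$ on its open domain (both standard). In a preliminary step I would establish that $\thetar$ is an analytic, strictly increasing bijection of $[H_{\min},\tau_\mu'(0)]$ onto its image, by computing $\thetar'$ in closed form and checking its positivity on this interval; then $\thetar^{-1}$ is analytic too, and each piece in \eqref{spec1}--\eqref{spec3} is analytic as a composition of $\sigma_\mu$, $\thetar^{-1}$ and affine maps. I would also record three identities, all obtained by substituting the Legendre-duality relations $\sigma_\mu(H_\ro)=q_\ro H_\ro-d(1-\varrho)/\varrho$, $\sigma_\mu'(H_\ro)=q_\ro$ and $\sigma_\mu(\tau_\mu'(0))=d$ into the definition of $\thetar$:
\begin{equation*}
\thetar(\tau_\mu'(0))=\varrho\tau_\mu'(0),\qquad \thetar(H_\ro)=\sigma_\mu(H_\ro)/q_\ro,\qquad \thetar'(H_\ro)=1,
\end{equation*}
together with the reformulation of the defining equation of $H_{\ro,\eta}$ in cases (2a)--(2b) as $\thetar(H_{\ro,\eta})=\varrho\eta H_{\ro,\eta}$.

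Continuity at each junction is then obtained by plugging these identities into the piecewise definition. For differentiability I compute the one-sided slopes via $\sigma_\mu'(\tau_\mu'(q))=q$: at the regular junctions $\varrho\eta H_\ro$ and $\thetar(H_\ro)$ both slopes equal $q_\ro$, and at $\varrho\tau_\mu'(0)$ both equal $0$. At the exceptional point $H=\varrho\eta H_{\ro,\eta}$ the left slope $\sigma_\mu'(H_{\ro,\eta})/(\varrho\eta)$ differs from the right one: in case (2a) the right slope is $\sigma_\mu'(H_{\ro,\eta})$, strictly smaller since $\varrho\eta<1$; in case (2b) the right slope is $\sigma_\mu'(H_{\ro,\eta})/\thetar'(H_{\ro,\eta})$, and a short calculation using $v(H):=d(1-\varrho)+\varrho\sigma_\mu(H)$ and the value $v(H_{\ro,\eta})=d(1-\varrho)/(1-\varrho\eta)$ gives $\thetar'(H_{\ro,\eta})=\varrho\eta+\varrho H_{\ro,\eta}\sigma_\mu'(H_{\ro,\eta})(1-\varrho\eta)^2/[d(1-\varrho)]>\varrho\eta$, so again the two slopes disagree.

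For concavity, since $D_{\mu,\ro,\eta}$ is already continuous and piecewise analytic it suffices to verify that its right derivative is non-increasing across the whole domain. On every piece built from $\sigma_\mu$ composed with an increasing affine map the slope is a positive multiple of $\sigma_\mu'$ of an affine argument, hence decreasing by concavity of $\sigma_\mu$; the middle linear piece has constant slope; and the jump at $\varrho\eta H_{\ro,\eta}$ is downward by the previous paragraph. The only non-routine verification concerns the piece $H\mapsto\sigma_\mu(\thetar^{-1}(H))$ on $[\thetar(H_\ro),\varrho\tau_\mu'(0))$; parametrising it by $h=\thetar^{-1}(H)$, its slope is $\sigma_\mu'(h)/\thetar'(h)$, and after simplifying with the Legendre identity $\sigma_\mu(h)-h\sigma_\mu'(h)=-\tau_\mu(\sigma_\mu'(h))$ the derivative in $h$ is a positive multiple of
\begin{equation*}
v(h)^2\sigma_\mu(h)\sigma_\mu''(h)-2d(1-\varrho)\sigma_\mu'(h)^2\bigl(d(1-\varrho)-\varrho\tau_\mu(\sigma_\mu'(h))\bigr).
\end{equation*}
On $[H_\ro,\tau_\mu'(0)]$ we have $\sigma_\mu'(h)\in[0,q_\ro]$, so $\tau_\mu(\sigma_\mu'(h))\le\tau_\mu(q_\ro)=d(1-\varrho)/\varrho$, which makes the second term $\le 0$; the first term is also $\le 0$ since $\sigma_\mu\ge 0$ and $\sigma_\mu''\le 0$ (strict concavity of $\sigma_\mu$, available because $\mu$ is multifractal). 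This last algebraic check, which requires the explicit forms of $\thetar'$ and $\thetar''$ combined with Legendre duality, is the main technical obstacle; once it is granted, concatenating the pieces yields the concavity of $D_{\mu,\ro,\eta}$.
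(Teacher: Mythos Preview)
Your proposal is correct and covers the same ground as the paper's proof. The paper also dismisses continuity, analyticity, and differentiability at the junctions as routine checks (which you carry out explicitly), and identifies the only substantive issue as the concavity of the piece $H\mapsto\sigma_\mu(\thetar^{-1}(H))$ on $[\thetar(H_\ro),\varrho\tau_\mu'(0)]$.

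The one genuine difference lies in how that key concavity step is handled. You parametrise by $h=\thetar^{-1}(H)$, differentiate the slope $\sigma_\mu'(h)/\thetar'(h)$, and exploit $\sigma_\mu''\le 0$ together with the sign of $d(1-\varrho)-\varrho\tau_\mu(\sigma_\mu'(h))$ on $[H_\ro,\tau_\mu'(0)]$. The paper instead passes to the inverse map $F=\thetar\circ\sigma_\mu^{-1}$, which has the cleaner closed form $F(u)=\varrho u g(u)/(d(1-\varrho)+\varrho u)$ with $g=\sigma_\mu^{-1}$, and shows $F''>0$ using $g''\ge 0$ (convexity of $\sigma_\mu^{-1}$) and the sign of $g'(u)(d(1-\varrho)+\varrho u)-\varrho g(u)$. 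These two computations are Legendre-dual to one another: your sign condition $\tau_\mu(\sigma_\mu'(h))\le d(1-\varrho)/\varrho$ is exactly the paper's condition $h(u)\ge 0$ rewritten on the $h$-side. The paper's route buys slightly lighter algebra (the substitution $u=\sigma_\mu(h)$ rationalises $\thetar$), while yours avoids introducing the inverse $\sigma_\mu^{-1}$ and stays closer to the original variables; both rely on the same structural fact that the critical sign flips precisely at $H_\ro$.
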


Our main theorem is the following.

\begin{theorem}\label{main2} Let $\mu$ be a multifractal Gibbs capacity on $\zu^d$, and $\eta\in (0,1)$.   For all $\varrho\in (0,1)$, with probability 1:
\begin{enumerate}
\item $\sigma_{\M_{\ro,\eta}(\mu)}=D_{\mu,\ro,\eta}$. 
\item One has  
$$
\tau_{\M_{\ro,\eta}(\mu)}(q)=
\begin{cases}
d(\varrho-1)+\varrho\tau_\mu(q)&\text{if }q<{q_\ro},\\
\eta(d(\varrho-1)+\varrho\tau_\mu(q))&\text{if }q\ge {q_\ro}.
\end{cases}
$$
\item
Set 
$$I_{\ro,\eta}= \begin{cases}  [\varrho \eta H_{\min}, \theta_\varrho(H_\varrho)]\cup\{\varrho\tau_\mu'(0)\} & \mbox{ if } \sigma_\mu(H_{\min})>  \frac{d(1-\varrho)}{1/\eta-\varrho} \\
[\varrho \eta{H_{\ro,\eta}}, \theta_\varrho(H_\varrho)]\cup\{\varrho\tau_\mu'(0)\} &\mbox{ if }\sigma_\mu(H_{\min})\le  \frac{d(1-\varrho)}{1/\eta-\varrho} \mbox{ and }H_{\varrho,\eta}<{H_\ro},\\
 \{\varrho \eta{H_{\ro,\eta}}\}\cup\{\varrho\tau_\mu'(0)\} & \mbox{ if } \sigma_\mu(H_{\min})\le  \frac{d(1-\varrho)}{1/\eta-\varrho}  \mbox{ and }H_{\varrho,\eta}={H_\ro}\\
\{\varrho\tau_\mu'(0)\} & \mbox{ if } \sigma_\mu(H_{\min})\le  \frac{d(1-\varrho)}{1/\eta-\varrho}  \mbox{ and }H_{\varrho,\eta}>{H_\ro} .\end{cases}
 $$
The multifractal formalism holds for $\M_{\ro,\eta}(\mu)$ over $I_{\ro,\eta}$ and it fails over $\mathrm{dom}(\sigma_{\M_{\ro,\eta}(\mu)})\setminus I_{\varrho,\eta}$. 
\end{enumerate}
\end{theorem}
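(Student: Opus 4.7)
The analysis rests on the identity
\[
\M_{\varrho,\eta}(\mu)(3I) \;=\; \sup_{j'\ge g(I)}\; \sup\{\mu(J^{\varrho\eta}) \,:\, J\in \mathcal D_{j'},\ J\subset 3I,\ p_J = 1\},
\]
which reduces the problem to counting, for each generation $j'\ge j$, the surviving cubes $J\in\mathcal D_{j'}$ whose redundant parent $J^{\varrho\eta}$ has a prescribed $\mu$-Hölder behavior. For fixed $j'$ and $H'\in[H_{\min},H_{\max}]$, multifractal analysis of the Gibbs capacity $\mu$ produces $\approx 2^{\varrho\eta j'\sigma_\mu(H')}$ coarse cubes $K\in\mathcal D_{\lfloor \varrho\eta j'\rfloor}$ with $\mu(K)\approx 2^{-\varrho\eta j'H'}$; each such $K$ has $2^{j'd(1-\varrho\eta)}$ subcubes of generation $j'$, each surviving independently with probability $2^{-j'd(1-\eta)}$, so Chernoff-type concentration provides $\approx 2^{j'd\eta(1-\varrho)}$ surviving $J\subset K$ with high probability.

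I would first use this to compute $\tau_{\M_{\varrho,\eta}(\mu)}$. Summing $\mu(J^{\varrho\eta})^q$ over surviving $J\subset 3I$, then over $I\in\mathcal D_j$, and isolating the dominant pair $(j',H')$ in the resulting double sum, leads to $\tau_{\M_{\varrho,\eta}(\mu),j}(q) \approx \max\bigl(d(\varrho-1)+\varrho\tau_\mu(q),\ \eta(d(\varrho-1)+\varrho\tau_\mu(q))\bigr)$, the first piece coming from $j'\sim j$ and the second from $j'\to\infty$; these coincide at $q_\varrho$ and yield item (2) after a Borel--Cantelli upgrade to an almost-sure statement, in the spirit of the arguments developed in \cite{BS2020}.

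Next, I would identify for each exponent $H$ in the claimed domain a geometric mechanism realising it and compute the corresponding Hausdorff dimension. The first phase of \eqref{spec1} comes from points lying in infinitely many surviving $J\in\mathcal D_j$ whose parent $J^{\varrho\eta}$ has Gibbs exponent $H/(\varrho\eta)$; the cardinality estimate above yields exactly the dimension $\eta(d(1-\varrho)+\varrho\sigma_\mu(H/\varrho\eta))$, which matches $\tau^*(H)$. The last phase corresponds to points whose enclosing coarse cube at generation $\lfloor\varrho\eta j\rfloor$ has $\mu$-exponent $H/\varrho$ and whose nearest surviving cube of generation $j$ sits at the typical spacing $\sim 2^{-j\eta}$, producing dimension $\sigma_\mu(H/\varrho)$, strictly below $\tau^*(H)$ except at $H=\varrho\tau_\mu'(0)$. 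The nonlinear intermediate phase $\sigma_\mu\circ\theta_\varrho^{-1}$ arises from a critical balance between the $\mu$-exponent at the coarse cube and the effective scale at which the nearest surviving cube is met, precisely encoded in the equation $H=\theta_\varrho(H')$; and the remaining piece is a linear interpolation that agrees with $\tau^*$. Upper bounds on dimensions follow from the cardinality estimates and standard covering arguments. The lower bounds on the two Legendre phases are obtained from the $q$-tilted Gibbs auxiliary measure restricted to surviving cubes, whereas on the non-Legendre phases I would build ad hoc random measures, obtained by selecting at each generation a surviving descendant of a coarse cube in a suitably Gibbs-weighted fashion, and conclude via the mass distribution principle. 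Item (3) then follows from a direct comparison between the explicit formulas for $\sigma$ and $\tau^*$.

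The main obstacle will be the lower bound on the dimension on the non-Legendre phases, in particular $\sigma_\mu\circ\theta_\varrho^{-1}$: since the formalism fails there, no off-the-shelf tilted Gibbs measure has the correct local dimension, and one must construct random measures that simultaneously encode the selection of a typical surviving cube at each generation and the correct Gibbs weighting on its coarse parent, and then carefully analyse their local dimensions. Bookkeeping the cases (1), (2a), (2b) of Definition~\ref{defDetarho} and the transitions governed by the thresholds $H_\varrho$ and $H_{\varrho,\eta}$ (where phases can merge or collapse into the single points that make up $I_{\varrho,\eta}$) is an additional source of technical complication.
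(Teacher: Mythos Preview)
Your overall strategy is sound and identifies the right geometric mechanisms for each phase, but the technical execution diverges from the paper's in an important way.

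The paper does not build the lower-bound measures from scratch. Instead it organises the analysis around the limsup sets
\[
\mathcal{A}_{\varrho,H',\widetilde\xi,\delta}=\bigcap_J\bigcup_{j\ge J}\bigcup_{\substack{w\in\mathcal S_j(\eta)\\ \mu(I_w^{\varrho\eta})\approx |I_w^{\varrho\eta}|^{H'}}} B(x_w,\ell_w^\delta),
\]
and invokes the heterogeneous ubiquity theory of \cite{BSubiquity1,BSubiquity2} as a black box: Theorem~2.2 there gives the upper bound $\dim\mathcal{A}_{\varrho,H',\widetilde\epsilon,\delta}\le m_\varrho(H',\delta)+o(1)$, and Theorem~2.7 provides a measure $\nu_{\varrho,H',\delta}$ of lower Hausdorff dimension $m_\varrho(H',\delta)$ carried by $\mathcal{A}_{\varrho,H',\widetilde\xi,\delta}$. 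The key structural step (Proposition~\ref{setestimates}) is the inclusion $\underline E_{\M_\varrho}^{\le}(H)\subset\bigcap_\varepsilon\bigcup_{(H',\delta):\varrho H'/\delta\le H+\varepsilon}\mathcal{A}_{\varrho,H',\widetilde\varepsilon,\delta}$, after which everything reduces to the optimisation $\widetilde D_{\mu,\varrho,\eta}(H)=\max\{m_\varrho(H',\delta):\varrho H'/\delta\le H,\ 1\le\delta\le 1/\eta\}$, shown to equal $D_{\mu,\varrho,\eta}$ (Proposition~\ref{pro5.1}). For the decreasing part $H\ge\varrho\tau_\mu'(0)$, the paper uses a separate ingredient (Proposition~\ref{proplast}, from \cite{BS2020}): for the tilted Gibbs measure $\mu_{H/\varrho}$, almost every point has approximation rate $\delta_x=1$, forcing $\underline\dim(\M_\varrho,x)=\varrho\dim(\mu,x)$.

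Your proposal to ``build ad hoc random measures by selecting a surviving descendant of a coarse cube in a Gibbs-weighted fashion'' is essentially what the heterogeneous ubiquity construction does, so it would work, but you would be reproving a substantial piece of \cite{BSubiquity2} inline. The paper's modular route is shorter precisely because it externalises this. Conversely, your description of the Legendre-phase lower bounds via ``$q$-tilted Gibbs measure restricted to surviving cubes'' is not what the paper does on the increasing part (it uses the ubiquity measures uniformly there); the tilted Gibbs measure $\mu_H$ is used only on the decreasing part, and crucially combined with the $\delta_x=1$ statement, which you do not mention and which is not obvious. Your computation of $\tau_{\M_\varrho}$ and the comparison for item~(3) match the paper's approach.
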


So, we are able to compute in each case the multifractal spectrum and the $L^q$-spectrum, but these two quantities do not form a Legendre pair (i.e. they are not Legendre transform one  of each other). A very careful analysis must be done to find a sharper upper bound for $\si_{\M_{\ro,\eta}(\mu)}$ than the one provided by the multifractal formalism. 

One may wonder if alternatively the multifractal formalism developed by Olsen in~\cite{Olsen} does hold for $\M_{\ro,\eta}(\mu)$. However this formalism is tailored to study the Hausdorff dimension of the level sets of the local dimension, while (for all $\rho$), these sets $E_{\M_{\ro,\eta}(\mu)}(H)$ (see Definition~\ref{def3.1}) are empty for $H$ in a non-trivial subinterval of $\mathrm{dom}(\sigma_{\M_{\ro,\eta}(\mu)})$.

\subsection{The case $1< \varrho \le 1/\eta$} 

In this case, there is less redundancy.
For $\varrho\in (1,1/\eta]$, let us introduce some parameters:
\begin{itemize}
\sk\item 
Let $q_\varrho$ be the unique solution of $\varrho-1+\varrho\tau_\mu(q)=0$, and $H_\varrho=\tau_\mu'(q_\varrho)$.

\sk \item
 Let  $\widetilde H_\varrho =\min\{H\geq 0: \sigma_\mu(H) \geq d(1-1/\varrho)\}$.   
  \sk\item
Finally,   $  \widehat H_\varrho  =- \frac{\tau_\mu(q_\varrho) }{ q_\varrho} $.
\end{itemize} 
 
\begin{theorem}\label{main3} 
Let $\mu$ be a multifractal Gibbs capacity, and $\eta\in (0,1)$. For all $\varrho\in (1,1/\eta]$, with probability 1,  $\M_{\ro,\eta}$  satisfies the multifractal formalism with
$$
\sigma_{\M_{\varrho,\eta}(\mu)}(H)=
\begin{cases}
\eta(1-\varrho +\varrho\sigma_\mu(H/\varrho\eta)) &\text{if }  \ \ \sk \varrho\eta \widetilde H_\varrho  \le H < \varrho\eta  {H_\varrho} ,\\ 
 \ \ \  \ \  \ \  \ \  q_\varrho  H&\text{if }  \displaystyle  \ \ \varrho\eta {H_\varrho} \le  H <{H_\varrho} +  \widehat H_\varrho ,\\
\ \ \ \ \ \  \  \sigma_\mu \big (H-  \widehat H_\varrho \big)&\text{if }  \ \   {H_\varrho}+ \widehat H_\varrho \le  H \leq   \tau_\mu'(\infty)+  \widehat H_\varrho
\end{cases}
$$
and $\sigma_{\M_{\ro,\eta}(\mu)}(H) =-\infty$ otherwise. In particular, 

$$
 \tau_{\M_{\ro,\eta}(\mu)}(q)=\sigma_{\M_{\ro,\eta}(\mu)}^*(q)=
\begin{cases}
\ \tau_\mu(q)- \frac{\tau_\mu(q_\varrho)}{ q_\varrho} \cdot  q&\text{if } \ \ \sk\sk  q\le  q_\varrho,\\\sk\sk
\ \eta(\varrho-1+\varrho\tau_\mu(q))&\text{if } \ \  q_\varrho <q< \widetilde q_\varrho,  \\ \sk\sk
 \ \eta\varrho \tau_\mu'(\widetilde q_\varrho) \cdot  q&\text{if  \ \  $\widetilde q_\varrho<+\infty$ and  $q\ge \widetilde q_\varrho$}.
\end{cases}
$$
   \end{theorem}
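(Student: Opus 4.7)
I would follow closely the strategy used in \cite{BS2020} for the extremal case $\varrho=1/\eta$, the only new element being the interplay between the dilation $D_{\varrho\eta}$ and the random sampling $S_\eta$. For $I\in\mathcal D_j$ one has
$$
\M_{\varrho,\eta}(\mu)(3I)=\max\{\mu(J^{\varrho\eta}):J\subset 3I,\,p_J=1\}.
$$
Since each cube of $3I$ has about $2^{(n-j)d}$ descendants at generation $n$, each kept with probability $2^{-nd(1-\eta)}$, the smallest scale at which the maximum is typically attained is $n\asymp j/\eta$, at which the dilated cube $J^{\varrho\eta}$ lies at generation $\varrho j$. A cube $K\in\mathcal D_{\varrho j}$ of $\mu$-exponent $H$ has a surviving descendant $J\in\mathcal D_{j/\eta}$ only if $\sigma_\mu(H)\ge d(1-1/\varrho)$, which explains both the admissible range $H\in[H_\varrho^{(1)},H_{\max}]$ and the appearance of the geometric quantities $H_\varrho^{(2)},H_\varrho^{(3)}$ in the statement.

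\textbf{Step 1 (counting estimates).} Combining Chernoff bounds for the Bernoulli sampling with the large deviations principle for the Gibbs measure $\mu$, I would establish that almost surely, for every $H$ in a countable dense subset of $[H_{\min},H_{\max}]$ and all $j$ large enough,
$$
\#\{I\in\mathcal D_j:\M_{\varrho,\eta}(\mu)(3I)\asymp 2^{-\varrho j H}\}\asymp 2^{j\eta(\varrho\sigma_\mu(H)-d(\varrho-1))}
$$
(with the convention that an $H$ for which the right-hand side is $<1$ gives an empty set), together with a parallel estimate for the ``sparse'' population of $I\in\mathcal D_j$ with $p_I=1$, for which $\M_{\varrho,\eta}(\mu)(3I)\ge \mu(I^{\varrho\eta})\asymp 2^{-\varrho\eta j H'}$. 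The spatial correlations generated by $D_{\varrho\eta}$ (cubes $J,J'$ sharing the same ancestor $J^{\varrho\eta}=(J')^{\varrho\eta}$ inherit the same value $\mu(K)$) are absorbed by reducing each event to one indexed by the ancestor cube $K$ and then applying Borel--Cantelli uniformly in $H$ and $j$.

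\textbf{Step 2 (the $L^q$-spectrum and the upper bound $\sigma\le\tau^*$).} Decomposing the partition function by leader scale and exponent and inserting the Step~1 estimates, optimizing over $H$ at $n=j/\eta$ yields the middle phase $\varrho\eta\tau_\mu(q)+d(\varrho-1)\eta$. The sparse population at $n=j$ contributes the small-$q$ affine phase $\tau_\mu(q)+H_\varrho^{(3)}q$, the correction $H_\varrho^{(3)}q$ being precisely the shift making the tangent to the graph of $\sigma_\mu$ pass through $(0,d(1-1/\varrho))$, i.e.\ the defining property of $H_\varrho^{(2)}$; and the large-$q$ phase $H_\varrho^{(2)}q$ arises as the boundary effect coming from the minimal admissible exponent $H=H_\varrho^{(1)}$. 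The general inequality \eqref{formalism-1} then reduces the upper bound for $\sigma_{\M_{\varrho,\eta}(\mu)}$ to a direct piecewise Legendre transform matching the three-piece spectrum in the statement.

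\textbf{Step 3 (lower bound on $\sigma$ and main obstacle).} For each $H\in\mathrm{dom}(\sigma_{\M_{\varrho,\eta}(\mu)})$, I would build an auxiliary measure $\mu_H$ carried by $\underline E_{\M_{\varrho,\eta}(\mu)}(H)$ with $\dim\mu_H=\sigma_{\M_{\varrho,\eta}(\mu)}(H)$. For the strictly concave external phases this measure is a Gibbs measure at the relevant scale ($\varrho j$ for the bulk, $\varrho\eta j$ for the sparse population) conditioned on the survival event; for the linear middle phase, a Bernoulli mixture of the two endpoint constructions fills in the segment, as is standard for a linear part of a concave spectrum. The main technical obstacle, as in \cite{BS2020}, is to run Step~1 and Step~3 on a single event of full probability, which requires Borel--Cantelli estimates simultaneous over a dense family of exponents and over both the bulk and sparse populations: this is where the sole genuine difference with the case $\varrho=1/\eta$ of \cite{BS2020} (namely $\varrho\eta<1$, hence non-trivial dilation) must be handled carefully.
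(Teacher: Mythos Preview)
The paper does not prove Theorem~\ref{main3}; it explicitly states that the result follows by adapting the arguments of \cite{BS2020} (the case $\varrho=1/\eta$) and chooses to focus on $\varrho\le 1$. So there is no ``paper's own proof'' to compare against beyond the methodology visible in the $\varrho=1$ and $\varrho<1$ cases.

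Your overall strategy is in the right spirit, but your Step~3 differs from the paper's methodology and contains a genuine gap. For the lower bound in the cases it does treat, the paper does \emph{not} use conditioned Gibbs measures or ``Bernoulli mixtures''. Instead, the linear middle phase is obtained via \emph{heterogeneous ubiquity} (Theorem~\ref{thubi} / Theorem~\ref{ubirho}): one fixes the pivot exponent (here $H_\varrho^{(2)}$) and varies the approximation rate $\delta$ continuously across an interval, producing for each $\delta$ a measure $m_{H_\varrho^{(2)},\delta}$ of the correct dimension supported on $\underline E^{\le}_{\M_{\varrho,\eta}(\mu)}(H)$; one then peels off the strict sublevel sets using the already-established upper bound. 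Your ``Bernoulli mixture of the two endpoint constructions'' does not obviously produce a measure of the intermediate dimension supported on $\underline E_{\M_{\varrho,\eta}(\mu)}(H)$ for $H$ strictly between the endpoints---a convex combination of two measures has lower Hausdorff dimension equal to the \emph{minimum} of the two, and it is unclear how the mixture forces the lower local dimension of $\M_{\varrho,\eta}(\mu)$ to take a prescribed intermediate value. If you have in mind a scale-alternating Cantor-type construction (switching between the two regimes along a sequence of generations with controlled frequencies), that can be made to work, but it is substantially more delicate than you suggest and is not ``standard'' in this setting.

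For the two concave external phases your idea is closer to the paper's: the rightmost phase $\sigma_\mu(H-H_\varrho^{(3)})$ comes from the full-measure set $\{\delta_x=1\}$ via Proposition~\ref{proplast} (not a conditioned Gibbs measure), and the leftmost phase uses ubiquity at the extremal rate $\delta=1/\eta$. I would recommend recasting Step~3 in terms of the approximation rate $\delta_x$ and the limsup sets $\mathcal A_{\varrho,H,\widetilde\xi,\delta}$, exactly as in Section~4 of the paper; the adaptation from $\varrho=1/\eta$ to general $\varrho\in(1,1/\eta]$ is then essentially bookkeeping.
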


There are two phase transitions in   $\sigma_{\M_{\ro,\eta}(\mu)}$, and one or two in $ \tau_{\M_{\ro,\eta}(\mu)}$.

\begin{center}
\begin{figure}
\includegraphics[scale=0.21]{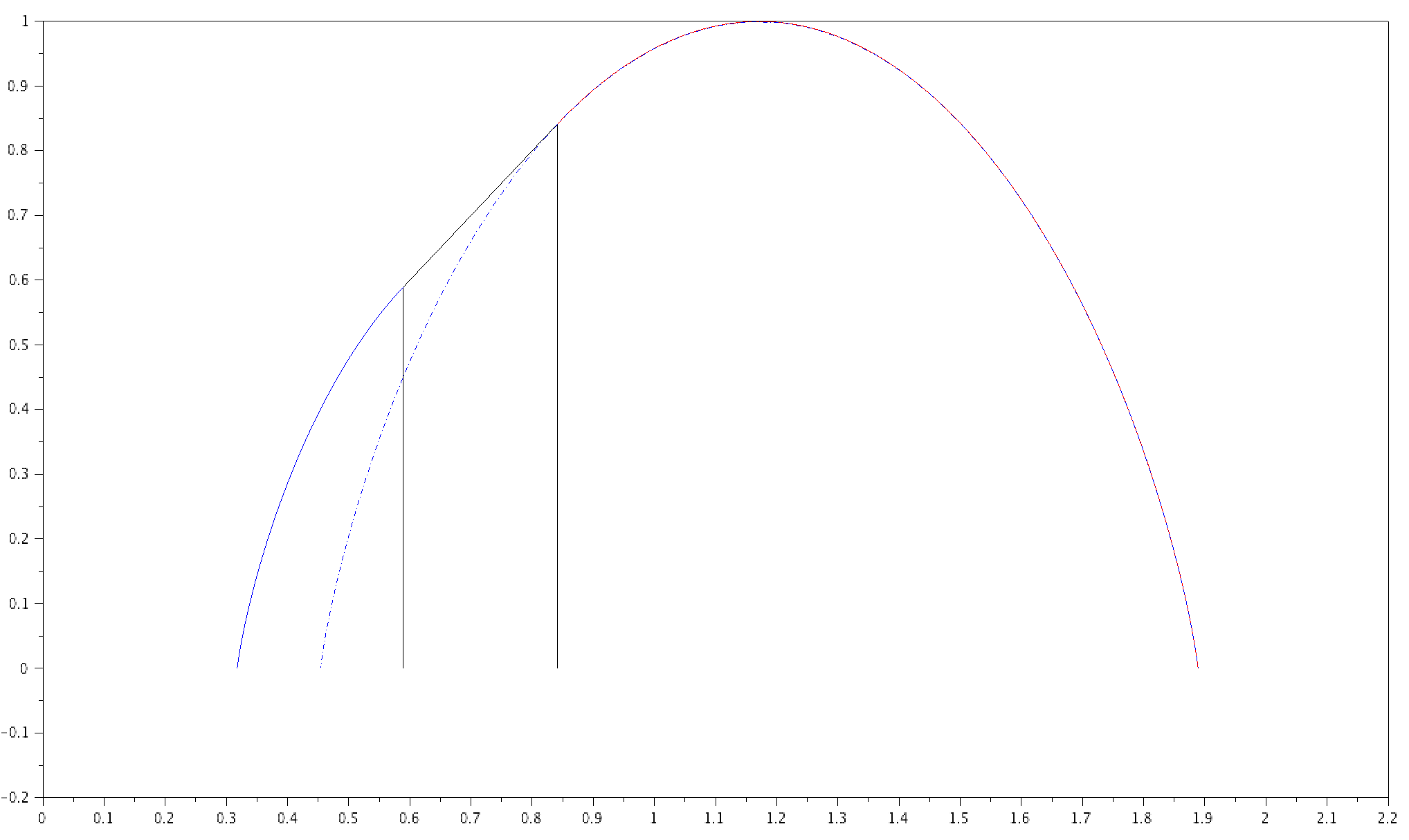}
\includegraphics[scale=0.22]{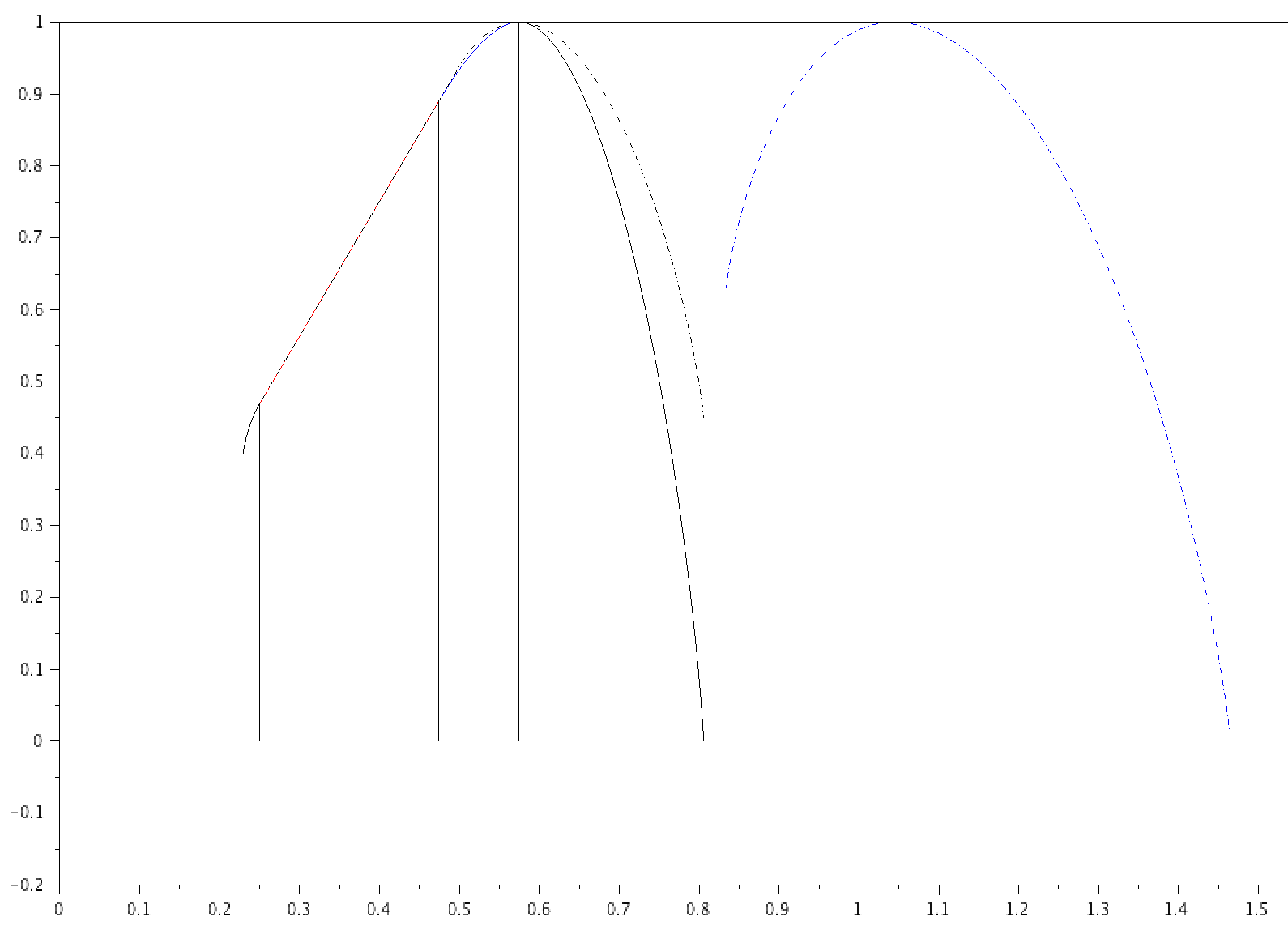}
  \caption{{\bf Left:} Spectrum of $\M_{1,\eta}(\mu)$ for a given spectrum of $\mu$ when $\eta=0.7$: 3 phases. {\bf Right:} Spectrum of $\M_{\ro,\eta}(\mu)$ when $\ro=0.55$, $\eta=0.5$,  case 1: 4 phases. Original spectrum in dashed blue, Legendre spectrum in dashed black (for $\rho<1$ only).}
 \end{figure}
 \end{center}

This situation for $\varrho\in(1,1/\eta)$ is very similar to the the case $\varrho=1/\eta$ studied in \cite{BS2020}, and with some work, one can adapt easily the arguments of \cite{BS2020}. Note that in \cite{BS2020}, when $\varrho=1/\eta$, we define $H_\varrho$ equivalently as  the unique real number such that the tangent to the graph of  $\sigma_\mu$ at the point $( {H_\varrho},\sigma_\mu( {H_\varrho}))$ passes through $(0,d(1-1/\varrho))$, and $\widehat H_\varrho$ as $- \frac{\tau_\mu(\sigma_\mu'(H_\varrho) )}{ \sigma_\mu'(H_\varrho)}$. These equivalent  formulations do hold when $\varrho\in (1,1/\eta)$. 

We choose not to give the proof of Theorem \ref{main3}, and to focus on Theorems \ref{main} and \ref{main2}, since  these are two situations where new phenomena appear, in particular where the multifractal formalism does not hold any more.

Let also note that the graph of the multifractal spectrum of $\M_{\varrho,\eta}(\mu)$  depends continuously (in the Hausdorff distance metric) on $\eta\in (0,1)$ and  $\varrho\in (0,1/\eta]$.

\subsection{Lacunary wavelet series}\label{LWS}

Let us quickly explain the connexion with lacunary wavelet series. In \cite{JAFF_lac}, Jaffard studied real  random processes  called lacunary wavelet series, essentially equivalent  to the following model when $d=1$: for $\gamma>0$ and $\eta<1$,let 
$$L_{\gamma,\eta}(x) = \sum_{j\geq 1} \sum_{k=0}^{2^j-1} 2^{-j\gamma}X_{j,k} \psi(2^jx-k), \ \ \ x\in\R,$$
where $\psi$ is a wavelet (see \cite{Meyer_operateur} for precise definitions), and the $X_{j,k} $ are independent random variables such that each $X_{j,k}$ is a Bernoulli variable with parameter $2^{-j\eta}$. Recalling Definition \ref{defM}, it turns out \cite{JaMe,JAFFARDPSPUM} that analyzing the pointwise regularity and the multifractal properties  of $\mu$ is equivalent to performing  the multifractal analysis the capacity $\M( c_w)$, where the sequence $c_w$ is the wavelet coefficients of $L_{\gamma,\eta}$: $c_w=(2^{-j\gamma}X_{j,k})_{j\geq 1, k\in \{0,...,2^j-1\}}$.

Hence our model can be seen as a generalisation of the $L_{\gamma,\eta}$ process, where  for $I\in \mathcal{D}_j$ the coefficient $2^{-j\gamma} $ (which can be interpreted as $(\mathcal{L}^1(I))^\gamma)$  or $\mathcal{L}^1(I^\gamma)$) is replaced by $\nu(I)^\gamma$ for a Gibbs measure $\mu$. In other words, instead of working from the uniform Lebesgue measure, we start  with a multifractal  measure  and obtain a random process with a much richer structure.

Let us mention \cite{Esser-Vedel} where some variations of Jaffard’s model are considered to construct random functions supported on Cantor sets, and whose restrictions to their support have a piecewise linear non-decreasing multifractal spectrum and do not obey the multifractal formalism. Other interesting multifractal random wavelets series are considered in \cite{AubryJaffard,BSFmu,durand2008treeindexedchain}.

\subsection{Final remark on $\tau_{\Mr}$ and the large deviations spectra of  $\M_\ro$}\label{LD}

\begin{definition} 
\label{defsmu}
Let $\mu\in  \mathrm{Cap}([0,1]^d) $ with $\supp(\mu)\neq\emptyset$.
For $H\in\R$, $j\in \N$ and $\ep>0$, let
$$\mathcal{E}_\mu(j,H\pm \ep) =
  \left\{ w\in \Sigma_j: \frac{\log_2 \mu(I_w)}{ {-j}} \in
[H-\ep,H+\ep] \right\}.$$
Then, the lower and upper large deviations spectra of $\mu$ are respectively defined as
\begin{eqnarray*} 
\underline {f}_\mu(H)&= &  \lim_{\ep\to 0}\liminf_{j\to+\infty} \frac{\log_2 \#\mathcal{E}_\mu(j,H\pm\ep) }{j}  \\
\mbox{ and } \ \  \   \overline {f}_\mu(H)  & =  & \lim_{\ep\to 0}\limsup_{j\to + \infty} \frac{\log_2 \#\mathcal{E}_\mu(j,H\pm\ep)}{j}.
\end{eqnarray*}
 \end{definition}

 Let now $\mu$ be a multifractal Gibbs capacity. The equality $\sigma_\mu(H)= \tau_\mu^*(H)$ can be strengthened into $\dim E_\mu(H)=\tau_\mu^*(H)$ for all $H\in\R$ \cite{ColletLebPor,Pesin} (see Definition~\ref{def3.1} for the definition of $E_\mu(H)$); together with the standard inequalities $\dim E_\mu(H)\le \underline{f}_\mu(H)\le \overline{f}_\mu(H)\le \tau_\mu^*(H)$ (valid for any capacity \cite{Olsen,LN,JLVVOJAK}), this yields  $\underline f_\mu=\overline f_\mu=\tau_\mu^*$ \cite{Rand,Olsen,FanFeng}. Alternatively, this comes from the facts that $\tau_\mu$ is differentiable and $\tau_\mu=\lim_{j\to\infty}\tau_{\mu,j}$, and the Gartner-Ellis theorem~\cite{DemboZeitouni}. It is not hard to deduce from the studies conducted in this paper and its companion~\cite{BS2020} that for all $\varrho\in (0,1/\eta]$, with probability 1, one has $\tau_{\M_{\varrho,\eta}(\mu)}=\lim_{j\to\infty}\tau_{\M_{\varrho,\eta}(\mu),j}$ and $\underline f_{\M_{\varrho,\eta}(\mu)}=\overline f_{\M_{\varrho,\eta}(\mu) }=\tau_{\M_{\varrho,\eta}(\mu)}^*$.
\subsection{Organisation of the article}

In Section 3 we recall additional properties associated with the multifractal formalism of Gibbs capacities and measures, as well as some properties of the random sampling process that were already established in \cite{BS2020}. In Section 4, the case $\rho=1$ is treated (Theorem \ref{main}). Section 5 contains the proof of the redundant case $\rho\in (0,1)$, Theorem \ref{main2}. 
\section{More on the multifractal formalism, Gibbs capacities, and the random sampling}

\subsection{Some notations} For every $j\geq 1$, let $\Sigma_j=(\{0,1\}^d)^j$. Let  $\Sigma^*=\bigcup_{j\ge 0} \Sigma_j$ and $ \Sigma=(\{0,1\}^d)^\N$  be respectively the sets of finite and infinite words over the alphabet $\{0,1\}^d$. By convention $(\{0,1\}^d)^0$ contains the empty word denoted by $\epsilon$. The set $\Sigma^*$ acts on $\Sigma^* \cup\Sigma$ by concatenation, that is if $u\in \Sigma^*$ and $v\in \Sigma^* \cup\Sigma$, the concatenation $u\cdot v$ of $u$ and $v$ is obtained by adding the prefix $u$ to the word $v$. If $u\in\Sigma^*$, it defines the cylinder $[u]$ consisting of all infinite words with common prefix $u$.  Let $\mathcal C=\{[u]:u\in \Sigma^*\}$.  The length of an element $u$ of $\Sigma^* \cup\Sigma$, that is its number of letters, will be denoted by $|u|$. If $0\le j\le |u|$, $u_{|j}$ stands for the prefix of $u$ of length $j$. The set $\Sigma$ is endowed with the standard ultrametric distance $\mathrm{d}:(x,y)\mapsto 2^{-|x\land y|}$, where $x\land y$ is the longest common prefix of $x$ and $y$. 

The coding map 
$
\pi: t=(t_n^{(1)},\ldots,t_n^{(d)})_{n\ge 1}\in \Sigma\mapsto \left (\sum_{n=1}^\infty t^{(i)}_n2^{-n}\right)_{1\le i\le d}\in[0,1]^d
$
maps each cylinder $[w]\in \mathcal C$ onto the closed nontrivial dyadic cube  
$$
I_w=\prod_{i=1}^d\left[\sum_{k=1}^{|w|}w^{(i)}_k2^{-k}, 2^{-|w|}+\sum_{k=1}^{|w|}w_n^{(i)}2^{-k}\right].
$$
We define 
\begin{equation}
\label{defxw}
x_w= \left(\sum_{k=1}^j w^{(i)}_k 2^{-k} \right)_{1\le i\le d}.
\end{equation}
 If $x=(x^{(1)}, x^{(2)},\ldots,x^{(d)})\in [0,1]^d$ has no dyadic component, then $x$ is encoded by a unique $w=(w^{(1)},w^{(2)},\ldots,w^{(d)}) \in \Sigma$, and $I_j(x)$ stands for $I_{w_{|j}}$. When $x^{(i)}$ is dyadic,   $w^{(i)}$ is chosen as the largest element of $\{0,1\}^{\N_+}$ in lexicographical order which encodes $x^{(i)}$. In both cases, $w_{|j}$  is also denoted $x_{|j}$.

\subsection{Additional properties associated with multifractal formalism and Gibbs capacities}
\begin{definition}\label{def3.1}
Let  $\mu  \in \mathrm{Cap}([0,1]^d) $ with full support.  
For $x\in [0,1]^d$, the lower and upper  local dimensions of $\mu$ at $x$ are respectively defined as 
$$
{\underline \dim_\locloc}(\mu,x)=\liminf_{r \to 0^+}\frac{\log \mu(B(x,r))}{ \log r}  \ \text{ and } \  {\overline \dim_\locloc}(\mu,x)=\limsup_{r\to 0^+}\frac{\log \mu(B(x,r))}{ \log r }.
$$

When ${\underline \dim}(\mu,x) = {\overline \dim_\locloc}(\mu,x)$,  their common value is denoted by $ {\dim_\locloc}(\mu,x)$.

For $H\in\R$, set
\begin{equation*}
\begin{split}
\underline E_\mu(H)&=\left \{x\in[0,1]^d: {\underline \dim_\locloc}(\mu,x)=H\right \},\\
 \overline E_\mu(H)&=\left \{x\in[0,1]^d: {\overline \dim_\locloc}(\mu,x)=H\right \},\\
  E_\mu(h)&= \underline E_\mu(H)\cap   \overline E_\mu(H).
  \end{split}
 \end{equation*}
\end{definition}

Observe that for every $x\in \zu^d$, 
\begin{equation}
\label{equiv-dim}
{\underline \dim_\locloc}(\mu,x)=\liminf_{j \to +\infty}\frac{\log \mu\left (\bigcup_{I\in \mathcal{N}(I_j(x))}I \right) }{ -j}=\liminf_{j \to +\infty}\frac{\log  \max \Big(\mu(I) :  I \in \mathcal{N}(I_j(x) ) \Big) }{ -j},
\end{equation}
where  $I_j(x)$ stands for the unique dyadic cube of generation $j$ that contains $x$, and where the notation $\mathcal{N}(I)$  for $I\in \mathcal{D}_j$ stands for the family of dyadic cubes neighbor to $I$ of same generation:
\begin{equation}
\label{defNj}
\mbox{for } I\in \mathcal{D}_j, \ \  \mathcal{N}(I) =\{  I' \in \mathcal{D}_j: \ \overline{I} \cap \overline{I'}  \neq \emptyset \}
 \end{equation}
contains $I_j(x)$ and its $3^d-1$ neighboring dyadic cubes at generation $j$. By abuse of notations, we will sometimes write   $w\in \mathcal{N}(I)$ for $I\in \mathcal{D}_j$ to describe all the dyadic cubes $I_w \in \mathcal{D}_j$ belonging to~$\mathcal{N}(I)$.
  
\begin{definition}\label{defnewsets}
For any fully supported capacity $\mu\in \mathrm{Cap}([0,1]^d)$, define  the level sets 
\begin{eqnarray*}
\underline{E}^{\leq}_\mu(H) &=\{x \in [0,1]^d:\, {\dimi_\locloc}(\mu,x) \leq H\}  \\
\text{and }  \ \  \overline{E}^{\geq}_\mu(H) &=\{x \in [0,1]^d:\, {\dims_\locloc}(\mu,x) \geq H \}.
\end{eqnarray*}
\end{definition}
The multifractal formalism also claims  that 
\begin{align}
\label{upcap1}
&\dim \underline{E}^{\leq}_\mu(H)\le \tau_\mu^*(H) \text{ if } H\le \tau_\mu'(0^-),\\
\label{upcap2}
\text{and } &\dim \overline{E}^{\geq}_\mu(H)\le \tau_\mu^*(H) \text{ if } H\ge \tau_\mu'(0^+).
\end{align}

Let now $\mu$ be  a multifractal Gibbs capacity $\mu$. One always has:
{\begin{itemize}
\sk\item
$\tau_\mu$ is a limit (not only a liminf), i.e. 
\begin{equation}
\label{lim-tau} 
\tau_\mu(q) =  \lim_{j\to\infty} \tau_{\mu,j}(q), \ \ \ \mbox{ for every }q\in \R. 
\end{equation}
 \item $\tau_\mu$ is strictly concave, analytic (due to the properties recalled in the previous section), and $\sigma_\mu$ is strictly concave,  and real analytic   over $(H_{\min}, H_{\max})$. 
\item $\mu$ is quasi-multiplicative : There exists a constant $C\geq 1$ such that for every finite words $w,w'\in \Sigma^*$,
\begin{equation}
\label{quasib}
C^{-1} \mu(I_w) \mu(I_{w'}) \leq \mu(I_{ww'})\leq C \mu(I_w) \mu(I_{w'}) .
\end{equation}

\item
There are two exponents $0< \alpha_m\leq \alpha_M <+\infty$   such that for every finite word  $w \in \Sigma^*$,
\begin{equation}
\label{uniform}
 2^{-|w|\alpha_M}\leq \mu(I_{w})\leq 2^{-|w|\alpha_m} .
\end{equation}

\item $\mu$ is doubling.
\end{itemize}}
These properties will be used repeatedly in the following.

\subsection{Basic properties of the distribution of the surviving vertices}

All these properties are proved in \cite{BS2020}. If $w\in\Sigma^*$, then $p_{I_w}$ is simply denoted $p_w$.

 \begin{definition}\label{def11}
If $w\in\Sigma^*$ and $p_w=1$, say that  $w$ is a surviving vertex (or a survivor).

  For every $j\geq 1$, denote by $\mathcal{S}_j(\eta)$   the (random) set of  surviving vertices   in $ \Sigma_j$:
    $$ \mathcal{S}_j(\eta):= \big\{ w\in \Sigma_j:  p_w =1\}.$$
\end{definition}

Recall that  $x_w$, defined by \eqref{defxw}, is the dyadic point corresponding to the projection of the finite word $w\in \Sigma_j$ to $\zu^d$. The first question concerns the distribution of the  points $x_w$,  for $w\in \sjeta$.

 \begin{definition}
 For every $j\geq 1$, and every finite word $W\in \Sigma^*$, one sets
 $$  \mathcal{S}_j(\eta,W) = \{w\in  \mathcal{S}_j(\eta): I_w \subset I_W\}.$$    
 \end{definition}

The set $ \mathcal{S}_j(\eta,W) $ describes the  surviving vertices at generation $j$ included in $I_W$.
 Obviously, for every $J\leq j$, 
 $$ \mathcal{S}_j(\eta) =\bigcup _{W\in \Sigma_J}   \mathcal{S}_j(\eta,W) .$$

 \begin{lemma}\cite{BS2020}
 \label{lem1}
There exists a positive sequence $(\ep_j)_{j\geq 1}$ converging to 0 such that, with probability 1,    for  every   $j$ large enough,  for every $W\in \Sigma_{\lfloor j(\eta-\ep_j)\rfloor}$, one has   $\mathcal{S}_j(\eta,W) \neq \emptyset$.  
 \end{lemma}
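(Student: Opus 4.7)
The plan is a direct application of the first Borel--Cantelli lemma to the bad event that some cube at generation slightly below $j\eta$ contains no survivor of generation $j$. For a sequence $\ep_j\to 0^+$ to be chosen at the end, set $J_j:=\lfloor j(\eta-\ep_j)\rfloor$ and
$$
B_j:=\bigl\{\exists\, W\in \Sigma_{J_j}:\ \mathcal{S}_j(\eta,W)=\emptyset\bigr\}.
$$
I will show that one can arrange $\sum_j \mathbb{P}(B_j)<\infty$.

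For a fixed $W\in\Sigma_{J_j}$, the set $\{w\in\Sigma_j:\, I_w\subset I_W\}$ has cardinality $2^{(j-J_j)d}$, and the corresponding Bernoulli variables $(p_w)$ are independent, each with parameter $2^{-jd(1-\eta)}$. Using the standard inequality $(1-x)^n\le e^{-nx}$ together with $j-J_j\ge j(1-\eta)+j\ep_j$, I obtain
$$
\mathbb{P}(\mathcal{S}_j(\eta,W)=\emptyset)=\bigl(1-2^{-jd(1-\eta)}\bigr)^{2^{(j-J_j)d}}\le \exp\bigl(-2^{(j-J_j)d-jd(1-\eta)}\bigr)\le \exp\bigl(-2^{jd\ep_j}\bigr).
$$
Since $\#\Sigma_{J_j}\le 2^{jd}$, a union bound yields
$$
\mathbb{P}(B_j)\le 2^{jd}\exp\bigl(-2^{jd\ep_j}\bigr).
$$

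It remains to pick $\ep_j\to 0^+$ so that the right-hand side is summable. Taking, for instance, $\ep_j:=\frac{2\log_2 j}{jd}$ for $j\ge 2$ gives $2^{jd\ep_j}=j^2$, so $\mathbb{P}(B_j)\le 2^{jd}e^{-j^2}$ is summable, and Borel--Cantelli ensures that $B_j$ occurs for only finitely many $j$ almost surely, which is exactly the statement. There is no real obstacle here: the content of the proof is simply that, for $J_j$ chosen slightly below $j\eta$, the expected number $2^{(j-J_j)d-jd(1-\eta)}=2^{jd\ep_j}$ of surviving descendants of a given cube $I_W$ is polynomially large in $j$, which comfortably dominates the $2^{jd}$ loss incurred by the union bound as soon as $\ep_j$ tends to $0$ slower than $(\log j)/j$; the choice of $\ep_j$ is essentially the only quantitative input.
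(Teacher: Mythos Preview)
Your argument is correct: the independence of the Bernoulli variables, the bound $(1-x)^n\le e^{-nx}$, and the union bound over $\Sigma_{J_j}$ combine exactly as you wrote, and the choice $\ep_j=\frac{2\log_2 j}{jd}$ makes $\sum_j \mathbb{P}(B_j)<\infty$, so Borel--Cantelli concludes. The paper does not reprove this lemma here but simply cites \cite{BS2020}; your first--moment/Borel--Cantelli computation is the standard way to establish such a covering property and is in the same spirit as the original.
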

 
 So, every cylinder of generation $\lf j(\eta-\ep_j)\rf$ contains a surviving vertex $w$ of generation $j$.
 
{\bf The sequence $(\ep_j)_{j\geq 1}$ is now fixed.}

 Lemma \ref{lem1} has the following consequence: Almost surely,  the set of points belonging to an infinite number of balls of the form $B(x_w, 2^{-\lfloor |w|(\eta-\ep_{|w|})\rfloor})$ with $p_w=1$  is exactly  the whole cube $[0,1]^d$, i.e.
 \begin{equation}
 \label{cover1}
 [0,1]^d = \limsup_{j\to +\infty} \ \bigcup_{w\in \mathcal{S}_j(\eta)} B(x_w, 2^{-\lfloor |w|(\eta-\ep_{|w|})\rfloor}).
 \end{equation}
  
 Next we obtain an upper bound for the cardinality of $\mathcal{S}_j(\eta,W)$ when $W \in \Sigma_{\lfloor \eta j \rfloor}$.
 
 \begin{lemma}
 \label{lem2}
  With probability one,  for  every large  $j$ and  every $W \in \Sigma_{\lfloor \eta j \rfloor}$,   $ \#  \mathcal{S}_j(\eta,W) \leq j$.
 \end{lemma}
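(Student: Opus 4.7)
Fix $j\ge 1$ and $W\in \Sigma_{\lfloor \eta j\rfloor}$. The cylinder $I_W$ contains exactly $N:=2^{d(j-\lfloor \eta j\rfloor)}$ dyadic cubes of generation $j$, and by definition
$$
\#\mathcal{S}_j(\eta,W)=\sum_{\substack{w\in \Sigma_j\\ I_w\subset I_W}}p_w,
$$
where the $p_w$ are independent Bernoulli variables of common parameter $q:=2^{-jd(1-\eta)}$. Hence $\#\mathcal{S}_j(\eta,W)$ follows a binomial law $\mathrm{Bin}(N,q)$, with expectation
$$
Nq=2^{d(j-\lfloor \eta j\rfloor)}\cdot 2^{-jd(1-\eta)}=2^{d(j\eta-\lfloor \eta j\rfloor)}\le 2^d.
$$
So $\#\mathcal{S}_j(\eta,W)$ has a bounded mean of order $O(1)$, whereas we want to bound the probability that it exceeds $j$; this is a large deviation event, and the strategy is a Chernoff-type tail bound combined with a union bound over $W$ and Borel--Cantelli.

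The plan is to estimate $\mathbb{P}(\#\mathcal{S}_j(\eta,W)\ge j)$ via the elementary inequality
$$
\mathbb{P}(\mathrm{Bin}(N,q)\ge j)\le \sum_{k=j}^{N}\binom{N}{k}q^k\le \sum_{k=j}^{N}\frac{(Nq)^k}{k!}\le \sum_{k=j}^{\infty}\left(\frac{2^d e}{k}\right)^k,
$$
using $\binom{N}{k}\le N^k/k!$ and $k!\ge (k/e)^k$. For $j$ large enough (larger than $2^d e$, say), the sequence $(2^de/k)^k$ is decreasing and super-geometrically small, so this tail is bounded by $C(2^d e/j)^j$ for a constant $C$ independent of $W$.

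Next, I would apply the union bound over the $2^{d\lfloor \eta j\rfloor}\le 2^{dj}$ cylinders $W\in \Sigma_{\lfloor\eta j\rfloor}$:
$$
\mathbb{P}\bigl(\exists\, W\in \Sigma_{\lfloor \eta j\rfloor}:\ \#\mathcal{S}_j(\eta,W)> j\bigr)\le 2^{dj}\cdot C\left(\frac{2^de}{j}\right)^j.
$$
Taking logarithms, the exponent is $j\bigl(d\log 2+\log(2^d e)-\log j\bigr)$, which tends to $-\infty$ as $j\to\infty$; in particular the right-hand side is summable in $j$. The Borel--Cantelli lemma then yields that almost surely, for all large enough $j$ and all $W\in \Sigma_{\lfloor \eta j\rfloor}$, one has $\#\mathcal{S}_j(\eta,W)\le j$, as required.

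I do not anticipate a genuine obstacle here: the only point needing a little care is to check that the Chernoff tail decays fast enough (super-exponentially in $j$) to survive the union bound over the $2^{d\lfloor\eta j\rfloor}$ cylinders $W$. This works because the mean of $\mathrm{Bin}(N,q)$ stays $O(1)$ while the threshold $j$ grows, producing a factorial-type decay $(\mathrm{const}/j)^j$ that comfortably dominates the $2^{dj}$ factor from the union bound.
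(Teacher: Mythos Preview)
Your proof is correct. The paper itself does not prove this lemma but cites the companion paper \cite{BS2020}; your argument---a Chernoff-type tail bound on a binomial with bounded mean, followed by a union bound over the $2^{d\lfloor\eta j\rfloor}$ cylinders and Borel--Cantelli---is the standard route and is undoubtedly what is done there as well.
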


As a conclusion, one keeps in mind the intuition that every cylinder $W\in \Sigma_{\lf \eta j  \rf}$ contains at least one, but not much more than one surviving vertex $w\in \mathcal{S}_j(\eta)$.

 \section{Proof of Theorem~\ref{main}}

Recalling Definitions \ref{defMetarho}, one has
 $$
\M_{1,\eta}(\mu)=\M_\eta (D_{\eta} \mu)=\M(S_\eta (D_{\eta} \mu)).
$$
 
 By Definitions \ref{defM} and \ref{defMeta}, each $\M_{1,\eta}(\mu) (I)$, for $I\in \mathcal{D}_j$, can be written as
 $$\M_{1,\eta}(\mu) (I) = \sup_{I'\subset I, \ p_{I'}=1} \mu((I')^\eta).$$
 For sake of simplicity,  $\M_{1,\eta}(\mu)$ is denoted by $\M_1$ in this section.

 \subsection{{\bf Upper and lower bound for the weights of $\M_1$ on dyadic cubes}}

By some abuse of notations, we sometimes identify a dyadic cube $I\subset \zu^d$ of generation $j$ with the corresponding word $w\in \Sigma_j$.
 
\begin{lemma}
\label{lem3}
With probability 1,  there exists a positive sequence $(\widetilde \ep_j)_{j\geq 1}$ such that for every $w\in \Sigma^*$,  
\begin{equation}
\label{majmin0}
 \mu(I_w) 2^{-|w|\widetilde \ep_{|w|} } \leq    \M_1(I_w)    \leq  \     \mu(I^\eta_w).
\end{equation}
\end{lemma}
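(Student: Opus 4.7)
I would split the proof into the upper and lower bounds, which rely on very different ideas.

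\textbf{Upper bound.} This is purely a set-inclusion argument based on the dyadic grid. Using the characterization $\M_1(I_w)=\sup\{\mu((I')^\eta):\, I'\subset I_w,\, p_{I'}=1\}$ recalled just before the statement, it suffices to show that $(I')^\eta\subseteq I_w^\eta$ for every such $I'$, which will give $\mu((I')^\eta)\le \mu(I_w^\eta)$ by monotonicity and hence the bound after taking the supremum. If $|I'|=j'\ge |w|$, then $(I')^\eta$ is the dyadic cube of generation $\lfloor\eta j'\rfloor\ge \lfloor\eta|w|\rfloor=|I_w^\eta|$ that contains $I'$; since $(I')^\eta$ intersects $I_w^\eta$ (they both contain $I'\subset I_w\subset I_w^\eta$) and has side-length at most that of $I_w^\eta$, the standard dyadic dichotomy forces $(I')^\eta\subseteq I_w^\eta$.

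\textbf{Lower bound.} The idea is to use Lemma \ref{lem1} to produce, for each sufficiently deep cube $I_w$, a surviving descendant $v^*\subseteq I_w$ whose generation $j'(|w|)$ is only marginally larger than $|w|/\eta$, so that $(I_{v^*})^\eta$ is only marginally deeper than $I_w$ and $\mu((I_{v^*})^\eta)$ can be compared to $\mu(I_w)$ through the quasi-multiplicativity and uniform bounds \eqref{quasib}--\eqref{uniform}. Concretely, for each large integer $n$ let $j'(n)$ be the smallest integer with $\lfloor j'(\eta-\ep_{j'})\rfloor\ge n$. Since $\ep_j\to 0$, one checks that $j'(n)\sim n/\eta$ and that the overshoot $|u|:=\lfloor\eta j'(n)\rfloor-n$ satisfies $|u|/n\to 0$ as $n\to\infty$.

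On the probability-one event provided by Lemma \ref{lem1}, for $n$ larger than a random threshold $N_0(\omega)$ the cylinder $I_w$ (of length $n$) contains at least one sub-cylinder $I_W$ of length $\lfloor j'(n)(\eta-\ep_{j'(n)})\rfloor\ge n$, to which we apply Lemma \ref{lem1} and obtain a surviving vertex $v^*\in\Sigma_{j'(n)}$ with $I_{v^*}\subseteq I_W\subseteq I_w$. Hence $\M_1(I_w)\ge \mu((I_{v^*})^\eta)$. Writing the dyadic cube $(I_{v^*})^\eta$ as $I_{w\cdot u}$ with $|u|=\lfloor\eta j'(n)\rfloor-n$ (as above), \eqref{quasib} and \eqref{uniform} yield
$$\mu((I_{v^*})^\eta)=\mu(I_{w\cdot u})\ge C^{-1}\mu(I_w)\mu(I_u)\ge C^{-1}\mu(I_w)\,2^{-|u|\alpha_M}.$$
Setting $\widetilde\ep_n=(|u|\alpha_M+\log_2 C)/n$ for $n\ge N_0$ and an arbitrary finite value for $n<N_0$ (so that the inequality is trivially satisfied in the exceptional range since the sequence may blow up on a finite set), this is exactly $\M_1(I_w)\ge \mu(I_w)\,2^{-n\widetilde\ep_n}$, and $|u|/n\to 0$ gives $\widetilde\ep_n\to 0$.

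\textbf{Main obstacle.} The only slightly delicate point is the calibration of $j'(n)$: we need simultaneously $\lfloor j'(n)(\eta-\ep_{j'(n)})\rfloor\ge n$ (so Lemma \ref{lem1} can be applied inside $I_w$) and $\lfloor\eta j'(n)\rfloor-n=o(n)$ (so that $\widetilde\ep_n\to 0$). The defining inequality is self-referential, but since $\ep_j\to 0$ it is harmless; the main care to take is that the \emph{deterministic} sequence $\widetilde\ep_n$ must not depend on the random threshold $N_0(\omega)$ — this is why we simply inflate $\widetilde\ep_n$ on a finite initial range to absorb the random starting generation.
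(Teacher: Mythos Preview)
Your proposal is correct and follows essentially the same route as the paper's proof: the upper bound by the inclusion $(I')^\eta\subset I_w^\eta$, and the lower bound by using Lemma~\ref{lem1} to find a survivor $v^*$ of generation roughly $|w|/\eta$ inside $I_w$, then comparing $\mu((I_{v^*})^\eta)$ to $\mu(I_w)$ via \eqref{quasib}--\eqref{uniform}. Your calibration of $j'(n)$ as the minimal integer with $\lfloor j'(\eta-\ep_{j'})\rfloor\ge n$ is in fact slightly cleaner than the paper's direct choice $j'=\lfloor j/(\eta-\ep_j)\rfloor$; note also that since the statement places the ``there exists'' inside the ``with probability~1'', the sequence $(\widetilde\ep_j)$ is allowed to be random, so your handling of the finite initial range is fine (your closing remark about keeping $\widetilde\ep_n$ deterministic is therefore unnecessary).
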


\begin{proof}
Fix $w \in \Sigma_j$. In order to compute $\M_1(I_w)$, one needs to find the largest value $\mu((I')^\eta)$  for $I'\subset I_w$ and  $p_{I'}=1$. 
 
\sk

The upper bound in \eqref{majmin0} simply follows from the hierarchical structure of the initial Gibbs capacity $\mu$. Indeed, the greatest value  that may appear in the computation of $\M_1(I_w)$ is $\mu(I_w^\eta)$. 

\mk

Further, we estimate the lower bound. The covering lemma \ref{lem1} ensures that when $j=|w|$ is large enough, there is at least one surviving vertex $w'\in \mathcal{S}_{\lfloor j/(\eta-\ep_j)\rfloor }(\eta)$ such that $I_{w'}  \subset I_w$. Hence 
$$\M_1(I_w) \geq \mu(I_{w'}^\eta) .$$
The cube $I_{w'}^\eta$ corresponds to a cube $I_{\widetilde w}$, where $\widetilde w$ is the prefix of the word $w'$ of length $\big \lfloor \eta |w'| \big \rfloor= \big \lfloor\eta \lfloor j/(\eta-\ep_j)\rfloor     \big \rfloor $, which satisfies for large $j$ (i.e. for $\ep_j$ small) 
$$j(1+\ep_j/\eta)-\eta-1  \leq \big \lfloor \eta |w'| \big \rfloor  \leq j(1+2\ep_j/\eta).$$
Since $I_{w'} \subset I_w$,    $w$ is a prefix of the word corresponding to the cube  $I_{w'}^\eta$, so we can write using the quasi-Bernoulli property \eqref{quasib} of $\mu$ that 
$$          
 \frac 1 C \mu(I_w) \mu(J) \leq \mu(I_{w'}^\eta ) \leq C \mu(I_w) \mu(J) 
 $$
for some dyadic cube $J$ of generation between $j\ep_j/\eta-\eta-1$    and $ j 2\ep_j/\eta$.

Using the uniform bounds  \eqref{uniform} for $\mu$ to bound $\mu(J)$, recalling $j=|w|$, we  have
$$      \frac 1 C \mu(I_w)  2^{-   (|w| \ep_{|w|} \/\eta-\eta-1) \alpha_M }  \leq \mu(I_{w'}^\eta ) \leq C \mu(I_w)  2^{-  (|w| \ep_{|w|} \/\eta-\eta-1)\alpha_m}. $$
  This can be rewritten, for some positive sequence $(\widetilde \ep_j)_{j\geq 1}$ converging to zero, as
 \begin{equation}
 \label{voila}
   2^{-|w| \widetilde\ep_{|w|}} \leq  \frac{\mu(I_{w'}^\eta)}{\mu(I_w) }  \leq 2^{|w| \widetilde\ep_{|w|} },
 \end{equation}
 hence the result.
 \end{proof}

 \subsection{Local dimensions of $\M_1$, $\mu$, and approximation rates}

The value of the local dimensions of $\M_1$ at a point $x\in \zu^d$ depends on how close to the "surviving" wavelet coefficients $x$ are. Hence, we introduce  an approximation rate and some   following approximating sets.

 \subsubsection{{\bf Approximation rate and local dimensions of $\M_1$}}

Let us  denote, for every $w\in \Sigma^*$, 
 \begin{equation}
 \label{deflw}
 \ell_w=2^{-\lfloor |w|(\eta-\ep_{|w|})\rfloor }.
 \end{equation}

 Recall that by \eqref{cover1},  almost surely, one has the covering property 
$$
 \zu^d = \limsup_{j\to +\infty} \ \bigcup_{k\in \mathcal{S}_j(\eta)} B(x_w, \ell_w).$$

\begin{definition}
\label{defAdelta}
For every $\delta\geq 1$, we define
$$ \mathcal{A}_\delta = \limsup_{j\to +\infty} \ \bigcup_{k\in \mathcal{S}_j(\eta)} B \big(x_w, (\ell_w)^ \delta\big).$$
\end{definition}

The set $\mathcal{A}_\delta$ contains those points $x$ which are approximated at a rate $\delta\geq 1$ by those dyadic numbers $x_w$ such that $p_w=1$.  Obviously, $\mathcal{A}_1=[0,1]^d$ almost surely.
 
\begin{definition}
\label{def-approx}
For every $x\in \zu^d$, the approximation rate of $x$ is the real number
$$\delta_x=\sup\{\delta\geq 1: x\in   \mathcal{A}_\delta \}.$$
\end{definition}

A remarkable property is that, at the points $x$ which are badly approximated by the surviving dyadic numbers, the local dimension of $\M_1$ is easily expressed in terms of the local dimension of $\mu$.     

\begin{proposition}
\label{prop2}
Almost surely, for every $x\in \zu^d$, if $\delta_x=1$, then  $\dimi(\M_1,x)=\dimi (\mu,x)$.
\end{proposition}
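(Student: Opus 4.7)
The plan is to use the dyadic characterization \eqref{equiv-dim} of $\dimi(\cdot,x)$ and compare $\M_1$ to $\mu$ on the neighborhood $\mathcal{N}(I_j(x))$ via Lemma~\ref{lem3}. I would work on the (almost sure) intersection of the events given by Lemmas~\ref{lem1} and~\ref{lem3}, on which both the covering estimate \eqref{cover1} and the two-sided bound \eqref{majmin0} hold for every dyadic cube. The easy half $\dimi(\M_1,x)\le \dimi(\mu,x)$ does not use the hypothesis $\delta_x=1$: maximizing the lower bound $\M_1(I)\ge \mu(I)\,2^{-j\widetilde\ep_j}$ over $I\in\mathcal{N}(I_j(x))$, taking $-\log/j$ and passing to the $\liminf$ gives the inequality through \eqref{equiv-dim} and $\widetilde\ep_j\to 0$.

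The reverse inequality is the real point, since the crude upper bound $\M_1(I_w)\le \mu(I_w^\eta)$ from \eqref{majmin0} is too loose ($\mu(I_w^\eta)$ may be much larger than $\mu(I_w)$). To sharpen it I would first establish the following deterministic claim: for every $\alpha\in (0,1/\eta-1)$ there exists $j_\alpha(x)$ such that, for every $j\ge j_\alpha(x)$ and every surviving $w$ with $I_w\subset I$ for some $I\in\mathcal{N}(I_j(x))$, one has $|w|>(j-O(1))/((1+\alpha)\eta)$. Otherwise, since $\ell_w\ge 2^{-\eta|w|}$ (up to the subexponential factor $2^{|w|\ep_{|w|}}$), such a $w$ would satisfy $\ell_w^{1+\alpha}\ge 3\sqrt{d}\cdot 2^{-j}\ge |x-x_w|$; letting $j\to\infty$ would produce infinitely many surviving $w$ (with $|w|\ge j\to\infty$) for which $x\in B(x_w,\ell_w^{1+\alpha})$, forcing $x\in \mathcal{A}_{1+\alpha}$ and contradicting $\delta_x=1$.

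Granted the claim, set $g_w=\lfloor \eta|w|\rfloor$; then $j-g_w\le j\alpha+O(1)$. If $g_w\ge j$ then monotonicity gives $\mu(I_w^\eta)\le \mu(I)$; if $g_w<j$, the dyadic cube $I_w^\eta$ contains $I_w\subset I$ and has generation smaller than $j$, so it must be the dyadic ancestor of $I$ of generation $g_w$. Applying the quasi-Bernoulli property \eqref{quasib} to the decomposition of $I$ as $I_w^\eta$ concatenated with a word of length $j-g_w$, and using the uniform bound \eqref{uniform}, one obtains $\mu(I_w^\eta)\le C\mu(I)\,2^{(j-g_w)\alpha_M}\le C'\mu(I)\,2^{j\alpha\alpha_M}$. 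Taking the supremum over surviving $w\subset I$ in $\M_1(I)=\sup\{\mu((I')^\eta):I'\subset I,\,p_{I'}=1\}$ and then the maximum over $I\in\mathcal{N}(I_j(x))$ gives $\max_{I\in\mathcal{N}(I_j(x))}\M_1(I)\le C'\,2^{j\alpha\alpha_M}\max_{I\in\mathcal{N}(I_j(x))}\mu(I)$; passing to $-\log/j$, taking the $\liminf$ via \eqref{equiv-dim} and sending $\alpha\to 0^+$ yields $\dimi(\M_1,x)\ge \dimi(\mu,x)$.

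The hard part is really the generation lower bound asserted in the claim: the whole reason the proposition holds is that $\delta_x=1$ must be leveraged to rule out shallow surviving descendants of $\mathcal{N}(I_j(x))$, so that the dilation $I\mapsto I^\eta$ cannot produce cubes $I_w^\eta$ substantially larger than $I_j(x)$ when computing $\M_1$. Once this Diophantine-type input is secured, the Gibbs doubling and quasi-multiplicativity properties handle the rest in a routine fashion.
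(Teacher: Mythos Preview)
Your proof is correct and follows essentially the same strategy as the paper's. Both arguments obtain $\dimi(\M_1,x)\le \dimi(\mu,x)$ directly from the lower bound in \eqref{majmin0}, and for the reverse inequality both exploit $\delta_x=1$ to force any surviving vertex $w$ contributing to $\wM_1(I_j(x))$ to have generation at least $j/((1+\alpha)\eta)$ (the paper writes this as $j/(\eta+\ep)$), so that $I_w^\eta$ has generation within $O(\alpha j)$ of $j$, after which the quasi-Bernoulli and uniform bounds \eqref{quasib}--\eqref{uniform} (equivalently, the doubling property used in the paper) give $\mu(I_w^\eta)\le C\mu(I)2^{O(\alpha j)}$; letting $\alpha\to 0$ concludes. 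Your explicit case split $g_w\ge j$ versus $g_w<j$ is slightly cleaner than the paper's implicit restriction $|w'|\le j/(\eta-\ep_j)$, but the content is the same.
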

\begin{proof}

Let $x \in \zu^d$  be such that $\delta_x=1$. Recall that $I_j(x) \in \mathcal{D}_j$ is the unique dyadic cube of generation $j$ that contains $x$.
Our goal is to estimate $\M_1(B(x,2^{-j}))$. Observe that by \eqref{equiv-dim}, it is equivalent to estimate 
\begin{equation}
\label{defleaderM}
\wM_1(I_j(x))= \max\Big(\M_1(I): I\in \mathcal{N}(I_j(x))) \Big),
\end{equation}
 where  $\mathcal{N}(I)$ was defined in \eqref{defNj}.  

Since $\delta_x=1$, for every $\ep>0$, $x$ does no belong to $\mathcal{A}_{1+\ep}$. In other words, there exists an integer $J_{\ep,x}$ such that for any $j\geq J_{\ep,x}$,  $x$ does not belong to $\bigcup_{w\in \mathcal {S}_j(\eta)} B(x_w, (\ell_w)^{1+\ep} )$.

Let us use this to estimate from above the value of  $\wM_1(I_w)$,  where  $w\in \Sigma_j$  is  the unique word such that $I_j(x)=I_w$. To do so, it is enough to consider the dyadic cubes $I_{w'}$ such that:
\begin{itemize}
\item
$|w'|\geq |w|=j$,
\item
$w' \in \mathcal{S}_{j'}(\eta)$,
\item
$x_{w'} \in \bigcup_{ I\in \mathcal{N}(I_j(x))}I$.
\end{itemize}
As noticed in the proof of   Lemma \ref{lem3},   it is  enough to consider the words $w'$ such that $j \leq |w'| \leq  j/(\eta-\ep_j)$. Moreover, for $j$ large enough, it is not possible that $w'\in \mathcal{S}_{j'}(\eta)$ satisfies $j'\leq j/(\eta+\ep)$ and $|x_{w'} - x| \leq 2\cdot 2^{-j}.$ This would imply that
$$|x_{w'} - x| \leq 2\cdot 2^{-j} \leq 2\cdot 2^{-j' (\eta+\ep)} < (\ell_{w'})^{1+\ep},$$ 
contradicting $\delta_x=1$. Hence, the value of $\M_1(I_w)$ is reached for some word $w'$ of length satisfying $j/(\eta+\ep) \leq j' \leq  j/(\eta-\ep_j)$. Using the same kind of estimates as in Lemma \ref{lem3} and the doubling property of $\mu$, the corresponding value $ \mu(I_{w'}) $ can   be bounded by
\begin{equation}
\label{adapt1}
\mu(I_w) 2^{-j\widetilde \ep_j}  \leq  \mu(I_{w'}) \leq    \mu(I_w) 2^{j\widetilde \ep_j} ,
\end{equation}
for some other sequence $\widetilde \ep_j$ independent of $x$ (we keep the same notation $(\widetilde \ep_j)$  for the new sequence, to keep them simple). Hence $\wM_1(I_w)$ satisfies
\begin{eqnarray}
\label{majmin2}
\max \left(   \mu(I) : {I\in \mathcal{N}(I_j(x))}\right ) \, 2^{-j\widetilde \ep_j} 
 \leq \wM_1(I_w)  \leq   \max \left(   \mu(I) : {I\in \mathcal{N}(I_j(x))}\right ) \, 2^{j\widetilde \ep_j}.
\end{eqnarray}
Taking the liminf in \eqref{majmin2}, we conclude the proof of Proposition \ref{prop2}.
\end{proof}

 \subsubsection{{\bf Some approximating sets; heterogeneous ubiquity}}
 \label{sec-approx1}
 
In order to better understand the relation between the local behavior of $\mu$ and the approximation rate, we introduce some approximating sets.

\begin{definition}
Let $H\geq 0$, $\de\geq 1$ and let $\widetilde \xi:=( \xi_j)_{j\geq 1}$ be a  positive sequence. A word  $w\in \Sigma^*$ is said to satisfy property $\mathcal{P}(H,\widetilde \xi)$  whenever
  \begin{equation}
  \label{doublesided-0}
   |I_w^\eta|^{H+\xi_{|w|}}   \leq    \mu( I_w^\eta) \leq  |I_w^\eta| ^{H-\xi_{|w|}} .
   \end{equation}
Set 
$$ \mathcal{A}_{H,\widetilde\xi,\delta} = \bigcap_{J \geq 1} \  \bigcup_{j \geq J} \ \bigcup_{w\in \mathcal{S}_{j}(\eta) :  \mbox{{\tiny \, $w$ satisfies $\mathcal{P}(H,\widetilde \xi)$}   } }  B \Big (x_w, (\ell_w)^{\de}  \Big),$$

\end{definition}

The set $ \mathcal{A}_{H,\widetilde\xi,\delta}$ contains those points $x$ which are approximated at a rate $\de$ by dyadic points $x_w$  satisfying the property that the measure $\mu$ has a controlled local H\"older behavior around $x_w$.

\begin{proposition}
\label{majexpo}
If  $\widetilde\xi$ converges to 0 and $x\in\mathcal{A}_{H,\widetilde\xi,\delta}$,  then  
$$\eta H \leq \dimi (\M_1,x) \leq \frac{H }{\min(\delta,1/\eta)}.$$
\end{proposition}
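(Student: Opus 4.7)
The plan exploits the sequence $(w_n)_{n\ge 1}$ of surviving vertices $w_n\in \mathcal{S}_{j_n}(\eta)$, with $j_n=|w_n|\to\infty$, each satisfying $\mathcal{P}(H,\widetilde\xi)$ and $x\in B(x_{w_n},(\ell_{w_n})^\delta)$, furnished by the hypothesis $x\in\mathcal{A}_{H,\widetilde\xi,\delta}$. The two bounds are then treated separately.

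For the upper bound $\dimi(\M_1,x)\le H/\min(\delta,1/\eta)$, the key remark is that since $p_{w_n}=1$, taking $I'=I_{w_n}$ itself in the supremum defining $\M_1(I_{w_n})$ yields $\M_1(I_{w_n})\ge \mu(I_{w_n}^\eta)$, which by the lower bound in $\mathcal{P}(H,\widetilde\xi)$ is at least $(\ell_{w_n})^{H+\xi_{j_n}}$ up to constants. I would then set $r_n:=(\ell_{w_n})^\delta$ when $\delta\le 1/\eta$ and $r_n:=2^{-j_n}$ when $\delta>1/\eta$; in both cases one checks that $|I_{w_n}|=2^{-j_n}\le r_n$ and $|x-x_{w_n}|\le (\ell_{w_n})^\delta\le r_n$, so $I_{w_n}\subset B(x,3r_n)$, hence $\M_1(B(x,3r_n))\ge (\ell_{w_n})^{H+\xi_{j_n}}$. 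Taking logarithms and dividing by $\log(3r_n)$ gives the ratio $(H+\xi_{j_n})/\delta$ in the first case and $\eta(H+\xi_{j_n})=(H+\xi_{j_n})/(1/\eta)$ in the second; letting $n\to\infty$ produces the announced upper bound.

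For the lower bound $\eta H\le \dimi(\M_1,x)$, I would first reduce it to a pointwise control on $\mu$ near $x$. By the uniform upper bound from Lemma~\ref{lem3} and the doubling of the Gibbs measure $\mu$, for any $r\asymp 2^{-j}$ one has
$$
\M_1(B(x,r))\ \le\ \max_{I\in\mathcal{N}(I_j(x))}\M_1(I)\ \le\ \max_{I\in\mathcal{N}(I_j(x))}\mu(I^\eta)\ \le\ C\,\mu(I_j(x)^\eta),
$$
which yields the global inequality $\dimi(\M_1,x)\ge \eta\,\dimi(\mu,x)$. It therefore suffices to establish $\dimi(\mu,x)\ge H$. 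Since $\delta\ge 1$, the approximation $|x-x_{w_n}|\le (\ell_{w_n})^\delta\le \ell_{w_n}\asymp 2^{-j_n\eta}$ forces $x$ and $x_{w_n}$ to share their first $\lfloor j_n\eta\rfloor$ dyadic digits, so $I_{\lfloor j_n\eta\rfloor}(x)=I_{w_n}^\eta$, and the upper bound in $\mathcal{P}(H,\widetilde\xi)$ gives $\mu(I_{\lfloor j_n\eta\rfloor}(x))\le |I_{w_n}^\eta|^{H-\xi_{j_n}}$ at the specific dyadic generations $\lfloor j_n\eta\rfloor$.

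The main obstacle is then to extend this estimate from the (a priori sparse) sequence of approximating generations $\lfloor j_n\eta\rfloor$ to \emph{all} large dyadic generations. For a generic $j$ lying between $\lfloor j_n\eta\rfloor$ and $\lfloor j_{n+1}\eta\rfloor$, the cube $I_j(x)$ is a sub-cube of $I_{\lfloor j_n\eta\rfloor}(x)$, and by the quasi-multiplicativity \eqref{quasib} of $\mu$ one factors $\mu(I_j(x))$ (up to bounded constants) as $\mu(I_{\lfloor j_n\eta\rfloor}(x))$ times the measure of a suffix cylinder, which I would control by combining the base estimate $\mu(I_{\lfloor j_n\eta\rfloor}(x))\le 2^{-j_n\eta(H-\xi_{j_n})}$ with the uniform bound $\mu(I)\le |I|^{\alpha_m}$ applied to the suffix. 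Conducted carefully using the Gibbs regularity and the fact that $\xi_{j_n}\to 0$, this interpolation yields $\mu(I_j(x))\le 2^{-j(H-o(1))}$ uniformly for large $j$, giving $\dimi(\mu,x)\ge H$ and thus $\dimi(\M_1,x)\ge \eta H$.
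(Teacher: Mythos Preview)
Your upper bound argument is correct and matches the paper's: both pick the scale $J_n\approx \min(\delta,1/\eta)\cdot \eta |w_n|$, observe that $I_{w_n}\subset 3I_{J_n}(x)$, and use $p_{w_n}=1$ together with the lower inequality in $\mathcal{P}(H,\widetilde\xi)$ to get $\wM_1(I_{J_n}(x))\ge \mu(I_{w_n}^\eta)\ge 2^{-\lfloor\eta j_n\rfloor(H+\xi_{j_n})}$.

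Your lower bound argument, however, has a genuine gap. The reduction to $\dimi(\mu,x)\ge H$ is a correct consequence of $\M_1(I_w)\le \mu(I_w^\eta)$, but the claim $\dimi(\mu,x)\ge H$ itself is \emph{not} a consequence of $x\in\mathcal{A}_{H,\widetilde\xi,\delta}$, and your interpolation cannot establish it. The membership $x\in\mathcal{A}_{H,\widetilde\xi,\delta}$ is a \emph{limsup} condition: it only controls $\mu(I_{\lfloor j_n\eta\rfloor}(x))$ along an uncontrolled subsequence $(j_n)$, with no bound whatsoever on the ratios $j_{n+1}/j_n$. For $j$ in a gap, your interpolation gives
\[
\mu(I_j(x))\le C\,2^{-\lfloor j_n\eta\rfloor(H-\xi_{j_n})}\,2^{-(j-\lfloor j_n\eta\rfloor)\alpha_m},
\]
and since $\alpha_m\le H_{\min}\le H$ (strictly, when $H>H_{\min}$), this yields only $2^{-j(H-o(1))}$ if $(j-\lfloor j_n\eta\rfloor)/j\to 0$, which you have no way to force. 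In fact the conclusion $\dimi(\mu,x)\ge H$ is false in general: nothing prevents $x$ from also lying in some $\mathcal{A}_{H',\widetilde\xi',1/\eta}$ with $H'<H$, which by the (correct) upper bound would give $\dimi(\M_1,x)\le \eta H'<\eta H$.

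What the paper actually derives ``simply from \eqref{majmin0}'' is only $\dimi(\M_1,x)\ge \eta\,\dimi(\mu,x)\ge \eta H_{\min}$, valid for \emph{every} $x$; this is also the only form of the lower bound ever used later (see the observation opening Section~4.4 and its use in Section~4.4.1 for $\widetilde H=H_{\min}$). So the left inequality in the proposition should be read as $\eta H_{\min}$ rather than $\eta H$, and your attempt to prove the stronger statement was chasing something that is not true in general.
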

\begin{proof}
The left inequality is true for all $x$'s, and follows simply from \eqref{majmin0}.

Let $x\in\mathcal{A}_{H,\widetilde\xi,\delta}$. There exists  an infinite sequence of  words $(w_n)_{n\geq 1}$  such that
\begin{itemize}
\sk
\item
$|x-x_{w_n}| \leq (\ell_{w_n})^\delta=2^{- \delta \lfloor |w_n|(\eta-\ep_{|w_n|})\rfloor}$,
\sk
\item
$p_{w_n}=1$,

\sk
\item
$  2^{-\lfloor \eta |w_n|\rfloor  (H+\xi_{|w_n|} ) } \leq \mu(I_{w_n}^\eta) \leq 2\cdot 2^{-\lfloor \eta |w_n|\rfloor   (H-\xi_{|w_n|})}$.

\end{itemize}

The last property follows from  \eqref{doublesided-0}.

Set $\widetilde \delta = \min(\delta, 1/\eta)$. For every $n\ge 1$, let $J_n=\lfloor   \widetilde\delta \lfloor |w_n|(\eta-\ep_{|w_n|})\rfloor \rfloor  $. We are going to estimate $\wM_1 (I_{J_n}(x)) $ from below. To do so, as noticed in the proofs of Lemma      \ref{prop2} and Proposition \ref{lem3},   it is  enough to consider the words $w'$ such that $J_n \leq |w'| \leq  J_n/(\eta-\ep_{J_n})$. 

In particular, by construction the dyadic cube $I_{w_n}$ satisfies that $I_{w_n}\subset \bigcup_{I\in \mathcal{N}(I_{J_n}(x))} I$ and 
$$J_n \leq\widetilde\delta |w_n|(\eta-\ep_{|w_n|}) \leq  |w_n| \leq     \frac{J_n}{\widetilde \delta (\eta-\ep_{|w_n|})} \leq   \frac{J_n}{ \eta-\ep_{J_n}}  ,$$
 since $1 \leq \widetilde \delta \leq  1/\eta$. So, 
\begin{eqnarray*}
\wM_1 (I_{J_n}(x))  & \geq  & \mu(I_{w_n}^\eta) \geq 2^{-[\eta |w_n|] (H+\xi_{|w_n|} ) }   \geq C 2^{-  (H+\xi_{|w_n|})  \eta  \frac{J_n} {\widetilde \de (\eta-\ep_{|w_n|}) }} \geq C2^{-J_n \cdot    \frac{  (H+\xi_{|w_n|})}{  \widetilde \delta} \cdot \frac{1}{1-\ep_{|w_n|} /\eta}  }.
\end{eqnarray*}
Consequently, 
$$\liminf_{n\to +\infty} \frac{\log \wM_1(I_{J_n}(x))}{\log 2^{-J_n}} \leq \lim_{n\to +\infty }  \frac{ (H+\xi_{|w_n|}) }{  \widetilde \delta} \cdot \frac{1}{1-\ep_{|w_n|} /\eta}  =  \frac{H}{  \widetilde \delta}.$$
\end{proof}

A remarkable property is that the Hausdorff dimension of the sets $\mathcal{A}_{H,\widetilde\xi,\delta}$ can be bounded from  below using the notion of {\em heterogeneous ubiquity} developed in \cite{BSubiquity1,BSubiquity2}. Next theorem corresponds to Theorem 2.7 in \cite{BSubiquity2}, or in a weaker form to Theorem 1.2 in \cite{BSubiquity1}, in the case $\ro=1$.

\begin{theorem}
\label{thubi}
Let $\mu$ be  a Gibbs capacity as in Definition \ref{defgibbscap}. With probability 1, since  \eqref{cover1} holds true, for all  $\de\ge 1$ and  $H\geq 0$  such that $\sigma_\mu(H)> 0$, there exists a sequence $\widetilde \xi$ decreasing to zero and a Borel probability measure $m_{H,\delta}$ such that:
\begin{itemize}
\item
$m_{H,\de}(\mathcal{A}_{H,\widetilde\xi,\delta}) >0$,
\sk
\item
for any set $E$ of Hausdorff dimension less than $\sigma_\mu(H)/\de$, one has $m_{H,\de}(E)=0$.
\end{itemize}
In other words, the lower Hausdorff dimension of $m_{H,\de}$ is larger than or equal to $\sigma_\mu(H)/\de$. In particular, $\dim \mathcal{A}_{H,\widetilde\xi,\delta}\geq \sigma_\mu(H)/\de$.
\end{theorem}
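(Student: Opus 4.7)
Since Theorem~\ref{thubi} is, up to notation, the case $\varrho = 1$ of the heterogeneous ubiquity theorems proved by the authors in~\cite{BSubiquity1, BSubiquity2}, my plan is to derive it by checking that the abstract hypotheses required there are met in the present Gibbs capacity setting. The strategy has two layers: first, constructing an auxiliary Gibbs measure $\mu_H$ of lower dimension $\sigma_\mu(H)$ concentrated on cubes satisfying $\mathcal{P}(H, \widetilde\xi)$ at every scale; second, performing a heterogeneous mass transference that contracts the $1$-ubiquitous covering of $[0,1]^d$ provided by~\eqref{cover1} into the $\delta$-shrunk family whose limsup defines $\mathcal{A}_{H, \widetilde\xi, \delta}$.

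For the auxiliary measure, since $\sigma_\mu(H) > 0$ and $\mu \approx \nu^\gamma$ with $\nu$ a Gibbs measure for the H\"older potential $\varphi$, I would pick $q \in \R$ realising $\sigma_\mu(H) = Hq - \tau_\mu(q)$ and introduce the Gibbs measure $\mu_H$ associated to the potential $q\varphi$, suitably normalised. Standard thermodynamic formalism yields that $\mu_H$ is quasi-Bernoulli and that for $\mu_H$-almost every $x$, $\mu(I_j(x)) = 2^{-jH + o(j)}$ together with $\mu_H(I_j(x)) = 2^{-j \sigma_\mu(H) + o(j)}$. A Borel--Cantelli argument built on the usual exponential concentration for Gibbs measures then provides a sequence $\widetilde\xi$ decreasing to~$0$ such that $\mu_H$-almost every $x$ has $I_{\lfloor \eta j \rfloor}(x)$ satisfying $\mathcal{P}(H, \widetilde\xi)$ for all $j$ large. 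Combined with the covering statement~\eqref{cover1}, this implies that $\mu_H$-almost every $x$ belongs to infinitely many balls $B(x_w, \ell_w)$ with $w \in \mathcal{S}_{|w|}(\eta)$ and $w$ satisfying $\mathcal{P}(H, \widetilde\xi)$; in particular $\mu_H(\mathcal{A}_{H, \widetilde\xi, 1}) = 1$, whence $\dim \mathcal{A}_{H, \widetilde\xi, 1} \geq \sigma_\mu(H)$.

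For the contraction from radius $\ell_w$ to radius $\ell_w^\delta$, I would run a Cantor-like construction along a sparse sequence of generations $(j_n)_{n \geq 1}$. At step $n$, inside each ball selected at step $n-1$ one picks a well-separated subfamily of balls $B(x_w, \ell_w^\delta)$ with $w \in \mathcal{S}_{j_n}(\eta)$ satisfying $\mathcal{P}(H, \widetilde\xi)$, assigns mass proportional to the $\mu_H$-mass of the parent cube at generation $\lfloor \eta j_n \rfloor$, and passes to the limit to obtain $m_{H, \delta}$. A standard scaling computation, comparing the source scale $\ell_w$ to the target scale $\ell_w^\delta$ and using the uniform local scaling of $\mu_H$ with exponent $\sigma_\mu(H)$, then gives $m_{H, \delta}(B(x, r)) \leq C \, r^{\sigma_\mu(H)/\delta - o(1)}$ for $r$ small, so by the mass distribution principle $m_{H,\delta}$ vanishes on any set of Hausdorff dimension less than $\sigma_\mu(H)/\delta$. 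The main obstacle lies in the technical bookkeeping at this third step: guaranteeing that at each generation $n$ enough surviving words $w \in \mathcal{S}_{j_n}(\eta)$ satisfying $\mathcal{P}(H, \widetilde\xi)$ lie inside each previously selected ball (which mixes Lemma~\ref{lem1} with sharp large deviation estimates for $\mu_H$), and that the corresponding balls $B(x_w, \ell_w^\delta)$ can be taken pairwise disjoint so that the inductive mass assignment is consistent across generations. These delicate probabilistic and geometric points are precisely the content of \cite[Thm.~2.7]{BSubiquity2}, from which Theorem~\ref{thubi} follows in the case $\varrho = 1$.
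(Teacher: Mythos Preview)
Your proposal is correct and matches the paper's treatment: Theorem~\ref{thubi} is not proved in the paper but is quoted as the case $\varrho=1$ of \cite[Theorem~2.7]{BSubiquity2} (and \cite[Theorem~1.2]{BSubiquity1}), with Remark~\ref{formequiv} noting that the slight change of condition from~\eqref{doublesided2} to~\eqref{doublesided-0} is absorbed by the doubling property of~$\mu$ and that the endpoints $H\in\{H_{\min},H_{\max}\}$ require a minor adaptation. Your sketch of the underlying construction (auxiliary Gibbs measure $\mu_H$, Cantor-type refinement along surviving vertices, mass distribution principle) is an accurate outline of what those references contain.
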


\begin{remark}
\label{formequiv}
(1) Actually the theorems of \cite{BSubiquity1,BSubiquity2}  are proved when the condition \eqref{doublesided-0} is replaced by 
\begin{equation}
  \label{doublesided2}
(\ell_w)^{H+\xi_{|w|}}   \leq    \mu(  B(x_w, \ell_w)) \leq  (\ell_w) ^{H-\xi_{|w|}} .
   \end{equation}
One easily checks that in the proofs of  \cite{BSubiquity1,BSubiquity2},   \eqref{doublesided2} can be replaced by  \eqref{doublesided-0}. Indeed, recalling \eqref{deflw}, the doubling property  of the Gibbs capacity $\mu$ implies that $ \mu(  B(x_w, \ell_w))$ and $\mu(I_w^\eta)$  are   comparable, since both $B(x_w, \ell_w)$ and $I_w^\eta$  contain $x_w$ and have comparable diameters. 

(2) Theorem  2.7 of \cite{BSubiquity2} deals with $H\in(H_{\min},H_{\max})$, but a simple adaptation of the proof makes it possible to extend the result to $H\in \{H_{\min},H_{\max}\}$.
\end{remark}

\subsection{Upper bound for the spectrum of $\M_1$}
 
To find an upper bound for $\si_{\M_1}$, we use the inequality \eqref{formalism-1}. Recall that  ${q_1}$ is the unique solution of the equation
$\tau_\mu(q)=0$.

\begin{proposition}
\label{propupper}
Almost surely, for every $q\in \R$, we have
\begin{equation}
\label{majtau}
\tau_{\M_1}  (q)  \geq  T(q):=\begin{cases}  \   \tau_{\mu}(q)  & \mbox{ if $q<{q_1}$},\\
\  \eta\tau_{\mu}(q) & \mbox{ if $q\ge {q_1}$}.
\end{cases}
\end{equation}
\end{proposition}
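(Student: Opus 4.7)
The proof reduces to bounding $\sum_{I \in \mathcal{D}_j,\,\M_1(I)>0} \M_1(3I)^q$ from above. Since each $\tau_{\M_1,j}$ is concave (H\"older's inequality) and super-additivity of the $\liminf$ preserves concavity, $\tau_{\M_1}$ is concave, hence continuous on the interior of its domain. It therefore suffices to prove the inequality on a countable dense set of exponents $q$ and extend by continuity, taking the intersection of countably many almost-sure events. For $q \le 0$, I would start from the lower bound $\M_1(3I) \ge \M_1(I) \ge \mu(I) 2^{-j \widetilde \ep_j}$ supplied by Lemma~\ref{lem3}; raising to the non-positive power $q$ reverses it, and together with the Gibbs identity $\sum_I \mu(I)^q = 2^{-j\tau_\mu(q) + o(j)}$ (consequence of \eqref{lim-tau} and doubling) this immediately yields $\tau_{\M_1}(q) \ge \tau_\mu(q) = T(q)$.

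For $q \ge 0$, the starting point is the pointwise bound $\M_1(3I)^q \le \sum_{J \in \mathcal{D},\, J \subset 3I,\, p_J=1} \mu(J^\eta)^q$, valid since $(\sup a_i)^q \le \sum a_i^q$ for $q \ge 0$. Summing over $I$, and observing that each surviving $J$ sits in at most $C$ triples $3I$, gives
$$
\sum_{I \in \mathcal{D}_j} \M_1(3I)^q \;\le\; C \sum_{j' \ge j-1} \sum_{J \in \mathcal{D}_{j'},\, p_J=1} \mu(J^\eta)^q.
$$
Taking expectations and invoking the Gibbs identity, $\mathbb{E}\bigl[\sum_{J \in \mathcal{D}_{j'},\, p_J=1} \mu(J^\eta)^q\bigr] = 2^{-\eta j' \tau_\mu(q) + o(j')}$. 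When $q > q_1$ one has $\tau_\mu(q) > 0$, so the geometric series in $j' \ge j$ is summable and dominated by its first term $2^{-\eta j \tau_\mu(q)+o(j)}$. Markov's inequality at rate $2^{\alpha j}$ together with Borel--Cantelli upgrades the expected bound to an almost-sure bound $\sum \M_1(3I)^q \le 2^{-\eta j \tau_\mu(q) + \alpha j}$ for large $j$; letting $\alpha$ run through a rational sequence tending to $0$ yields $\tau_{\M_1}(q) \ge \eta \tau_\mu(q) = T(q)$.

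When $0 \le q \le q_1$, one has $\tau_\mu(q) \le 0$ and the direct argument fails because the series diverges. I split the supremum defining $\M_1(I)$ by the generation of the surviving descendant $J$: ``deep'' means $g(J) \ge \lfloor j/\eta\rfloor$ (so $J^\eta \subset I$ and hence $\mu(J^\eta) \le \mu(I)$) while ``shallow'' means $j \le g(J) < \lfloor j/\eta\rfloor$ (so $J^\eta$ is a strict ancestor of $I$). Using $\M_1(I)^q \le \mathrm{shallow}(I)^q + \mathrm{deep}(I)^q$ for $q \ge 0$, the deep contribution is bounded by $\sum_I \mu(I)^q \le 2^{-j\tau_\mu(q)+o(j)}$. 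For the shallow part the same first-moment/Markov/Borel--Cantelli machinery applies, but with the inner sum truncated at $j' = \lfloor j/\eta\rfloor$; since $\tau_\mu(q) \le 0$, the truncated geometric series is dominated by its last term, which equals $2^{-j\tau_\mu(q)+o(j)}$ and matches the deep contribution. After passing from $\M_1(I)$ to $\M_1(3I)$ at constant cost (each survivor lies in boundedly many $3I$), adding the two pieces gives $\tau_{\M_1}(q) \ge \tau_\mu(q) = T(q)$.

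The main technical obstacle is precisely the range $0 \le q \le q_1$: the naive upper bound $\M_1(I) \le \mu(I^\eta)$ from Lemma~\ref{lem3} yields only $\tau_{\M_1}(q) \ge \eta \tau_\mu(q) - d(1-\eta)$, strictly weaker than $\tau_\mu(q)$. The deep/shallow split at the threshold $j/\eta$ is what makes the bound tight: it replaces the loose $\mu(I^\eta)$ by the much smaller $\mu(I)$ for descendants of generation $\ge j/\eta$, and simultaneously truncates the otherwise divergent geometric series so that Markov's inequality remains effective.
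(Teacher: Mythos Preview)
Your proof is correct and closely parallels the paper's argument: the treatment of $q<0$ and $q>q_1$ is identical (lower bound from Lemma~\ref{lem3} for $q<0$; first-moment bound followed by Markov/Borel--Cantelli on the survivor sum for $q>q_1$, dominated by the first term of the geometric series). The one substantive difference lies in the range $0\le q<q_1$. The paper justifies truncating the survivor sum at generation $j/(\eta-\ep_j)$ by invoking the covering Lemma~\ref{lem1}: since every $I\in\mathcal D_j$ already contains a survivor of generation at most $j/(\eta-\ep_j)$, the supremum defining $\wM_1(I)$ is achieved within that range and deeper survivors can simply be discarded. You instead split the supremum into ``shallow'' ($j\le g(J)<j/\eta$) and ``deep'' ($g(J)\ge j/\eta$) and bound the deep part \emph{deterministically} by $\mu(I)$ via the inclusion $J^\eta\subset I$. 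Both routes produce the same truncated geometric series $\sum_{j'=j}^{\approx j/\eta}2^{-\eta j'\tau_\mu(q)+o(j')}$, dominated by its last term $\approx 2^{-j\tau_\mu(q)}$. Your version has the mild advantage of not appealing to the covering lemma at this step; the paper's version avoids tracking two separate pieces. One minor technicality: the inclusion $J^\eta\subset I$ can fail by one or two generations at the threshold $g(J)=\lfloor j/\eta\rfloor$, but this is absorbed by the doubling property of~$\mu$.
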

\begin{proof}

Let $j\geq 1$. Recalling \eqref{deftau} and \eqref{defleaderM}, one has to estimate the sum
$ \sum_{w\in \Sigma_j} \wM_1( I_w)^q$.

\sk {\bf $\bullet$  Case   $q<0$:} using the lower bound in \eqref{majmin0},   one has
\begin{eqnarray*}
 \sum_{w\in \Sigma_j} \wM_1(  I_w)^q &  \leq  &    \sum_{w\in \Sigma_j} \max(\M_1(I): I\in \mathcal{N}(I_j(x)))^q \\
 & \leq&   3^{d+1} \sum_{w\in \Sigma_j}   \mu(I_w)^{q} 2^{-q|w|\widetilde \ep_{|w|} }   \leq 3^{d+1} \cdot  2^{- j(\tau_{\mu,j}(q) +q\widetilde\ep_j) }.
 \end{eqnarray*}
Since $\lim_{j\to +\infty}  \widetilde\ep_j =0$, letting $j$ go to infinity yields  $\tau_{\M_1}(q) \geq \tau_{\mu}(q)$.

\sk {\bf $\bullet$  Case $q>0$:}  the upper bound in \eqref{majmin0} is actually  too crude. Using the covering property \ref{lem1}, one knows that for every word $w\in \Sigma_j$, the value of $\wM_1(I_w)$ is reached for one word $w' $ whose length is between $j$ and $j/(\eta-\ep_j)$. Conversely, each word $w'  \in \mathcal{S}_{j'}(\eta)$ with $j  \leq j' \leq j/(\eta-\ep_j)$ may contribute to at most $3^{d}$ values of $\wM_1(I_w)$ for  $|w|=j$. Hence, one has
\begin{eqnarray*}
 \sum_{w\in \Sigma_j} \ \wM_1 (I_w)^q &  \leq  & 3^{d}  \sum_{j'=j}^{\lfloor j/(\eta-\ep_j)\rfloor } \sum_{w\in \mathcal{S}_{j'}(\eta) }  \mu(I_w^\eta)^q  \leq 3^{d}   \sum_{j'=j}^{\lfloor j/(\eta-\ep_j)\rfloor } \sum_{w\in\Sigma_{j'}}   p_w\mu(I^\eta_w) ^{q}
   \end{eqnarray*}
Recalling that when $w'\in \Sigma_{j'}$,  $p_{w'}$ is a Bernoulli variable with parameter $2^{-dj'(1-\eta)}$, one gets
$$
\mathbb{E} \Big ( \sum_{w\in \Sigma_j} \  \wM_1 (I_w)^q\Big )\le 3^{d}   \sum_{j'=j}^{\lfloor j/(\eta-\ep_j)\rfloor } \sum_{w\in\Sigma_{j'}}   2^{-dj'(1-\eta)}\mu(I^\eta_w) ^{q}.
$$
Now, observe that each cube $I_{w'}$, for $w'\in \Sigma_{\lfloor \eta j'\rfloor }$ and $j  \leq j' \leq j/(\eta-\ep_j)$, contains    $   2^{d(j'-\lfloor j'\eta\rfloor) }$  dyadic cubes   of generation $2^{j'}$. In other words, each $w'\in \Sigma_{\lfloor \eta j'\rfloor }$ with  $j  \leq j' \leq j/(\eta-\ep_j)$ appears  $  2^{d(j'-\lfloor j'\eta\rfloor )}$  times in the previous double sum as a cube of the form  $I_{w}^\eta$ with $w\in\Sigma_{j'}$. Thus,
\begin{eqnarray}
\label{malE1}
 \mathbb{E} \Big ( \sum_{w\in \Sigma_j}  \wM_1 (I_w)^q\Big )&  \leq  &      C  \sum_{j'=j}^{\lfloor j/(\eta-\ep_j)\rfloor } \sum_{w\in \Sigma_{[j'\eta]}}    \mu(I_w)^{q}  \leq C  \sum_{j'=j}^{\lfloor j/(\eta-\ep_j)\rfloor } 2^{- {\lfloor j'\eta\rfloor }\tau_{\mu,{[j'\eta]}}(q)  },
 \end{eqnarray}
for some constant $C$ depending only on the dimension $d$. The behavior of the last sum depends on the value of $\tau_{\mu}(q) $. 

\sk

\sk {\bf $\bullet$  Subcase $q>0$,  $\tau_{\mu}(q) <0$:} this coincides with the fact that  $q<{q_1}$. Fix  $s<\tau_{\mu}(q) $ and $\ep = (\tau_\mu(q)-s)/3$.  Recalling \eqref{lim-tau}, for $j'$ sufficiently large, $|\tau_{\mu,[j'\eta]}(q)  - \tau_\mu(q)|\leq \ep$. Then, since $\tau_\mu(q)<0$, one has 
$$
 \mathbb{E} \Big ( \sum_{w\in \Sigma_j}  \wM_1 (I_w)^q\Big )   \leq  C  \sum_{j'=j}^{\lfloor j/(\eta-\ep_j)\rfloor } 2^{- {\lfloor j'\eta\rfloor }( \tau_\mu(q) - \ep)  } \leq C 2^{-  j\eta/(\eta-\ep_j) ( \tau_\mu(q) - \ep)  } .
$$
Thus, 
$$\mathbb{E} \Big (\sum_{j\ge 1} 2^{js} \sum_{w\in \Sigma_j} \ \wM_1(I_w)^q\Big ) \leq  C \sum_{j\ge 1} 2^{j(s-  \eta/(\eta-\ep_j) ( \tau_\mu(q) - \ep) ) }  <\infty,$$ 
since when $j$ becomes large, $s-  \eta/(\eta-\ep_j) ( \tau_\mu(q) - \ep)  \leq  -\ep$. Hence, almost surely,
$$\sum_{j\ge 1} 2^{js} \sum_{w\in \Sigma_j} \ \wM_1(I_w)^q  = \sum_{j\ge 1} 2^{js}  2^{-j\tau_{\M1,j}(q)}<\infty.$$
 Consequently, $\tau_{\M_1}(q) \geq  s$, and letting $s$ tend to $\tau_\mu(q)$ one gets $\tau_{\M_1}(q) \ge \tau_{\mu}(q) $ almost surely. 

\sk

\sk {\bf $\bullet$  Subcase $q>0$,  $\tau_{\mu}(q) >0$:} this corresponds to the values $q> {q_1}$. The   arguments   above can be adapted here. Fix  $s<\tau_{\mu}(q) $,  $\ep = (\tau_\mu(q)-s)/3$ and $j'$ large.  Since $\tau_\mu(q)>0$, \eqref{malE1} gives in this case (since it involves a sum of a geometric sequence)
 $$
 \mathbb{E} \Big ( \sum_{w\in \Sigma_j}  \wM_1 (I_w)^q\Big )     \leq C  2^{-  j\eta ( \tau_\mu(q) - \ep)  } .
$$

 Letting $j$ tend to infinity,  and then $\ep$ to zero, we obtain  $\tau_{\M_1}(q) \geq  \eta\tau_{\mu}(q)$, almost surely.

\sk {\bf $\bullet$  Subcase $q>0$,  $\tau_{\mu}(q) =0$:}  In this case, a simple calculation gives  $\tau_{\M_1}(q) \geq  \tau_{\mu}(q)$.

\sk

Finally, the previous lower bounds hold for each   $q$ almost surely, but $\tau_{\M_1}$ and $\tau_\mu$ being concave functions, they hold   almost surely for all    $q$.
\end{proof}
\begin{corollary}
\label{cor-upper1}
With probability 1, one has $\sigma_{\M_{1}} \leq \tau^*_{\M_{1}}\le T^*$.
\end{corollary}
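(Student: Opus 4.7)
The plan is to chain two elementary facts, with essentially no new work required. The first inequality $\sigma_{\M_1}\le \tau_{\M_1}^*$ is simply the general multifractal formalism upper bound \eqref{formalism-1} recalled in the introduction, which is valid for any capacity with nonempty topological support. Almost surely, $\M_1$ has full support in $[0,1]^d$: this follows from Lemma~\ref{lem3} (which gives $\M_1(I_w)\ge \mu(I_w)2^{-|w|\widetilde\ep_{|w|}}>0$ for all $w$ large enough) together with the positivity on dyadic cubes of the Gibbs capacity $\mu$ (see \eqref{uniform}). Thus the first inequality holds on an event of probability 1.

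For the second inequality $\tau_{\M_1}^*\le T^*$, I would invoke Proposition~\ref{propupper}: with probability 1, $\tau_{\M_1}(q)\ge T(q)$ for every $q\in\R$ (note that in Proposition~\ref{propupper} the inequality is already stated uniformly in $q$ on a single almost sure event, thanks to the concavity argument at the end of its proof). The Legendre transform is order-reversing on functions, so for each $H\in\R$,
$$
\tau_{\M_1}^*(H)=\inf_{q\in\R}\bigl(Hq-\tau_{\M_1}(q)\bigr)\le \inf_{q\in\R}\bigl(Hq-T(q)\bigr)=T^*(H).
$$
Intersecting the two almost sure events (the one supporting Proposition~\ref{propupper} and the one on which $\M_1$ has full support) yields a single event of probability 1 on which both inequalities hold simultaneously, which gives the chain $\sigma_{\M_1}\le \tau_{\M_1}^*\le T^*$ as claimed.

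There is no genuine obstacle here: the corollary is a purely formal consequence of Proposition~\ref{propupper} combined with the universal inequality \eqref{formalism-1}. The real work has already been done upstream, namely the careful expectation estimates in \eqref{malE1} distinguishing the subcases $\tau_\mu(q)<0$, $\tau_\mu(q)>0$, $\tau_\mu(q)=0$ that produce the piecewise definition of $T$. In subsequent sections, the complementary lower bound $\sigma_{\M_1}\ge T^*$ will be the delicate part, relying on the ubiquity Theorem~\ref{thubi} applied to the sets $\mathcal A_{H,\widetilde\xi,\delta}$ and on Proposition~\ref{prop2}; the present corollary only records the easy upper half of the Legendre pair.
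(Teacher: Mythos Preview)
Your proposal is correct and follows essentially the same approach as the paper: invoke the general upper bound \eqref{formalism-1} for $\sigma_{\M_1}\le\tau_{\M_1}^*$, then use Proposition~\ref{propupper} and the order-reversing property of the Legendre transform for $\tau_{\M_1}^*\le T^*$. Your added remark that $\M_1$ has full support (via Lemma~\ref{lem3}) is a welcome justification that the paper leaves implicit.
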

\begin{proof}
This is a direct consequence of the multifractal formalism  \eqref{formalism-1}, i.e. $\sigma_{\M_{1}}\le \tau^*_{\M_{1}}$, together with the inequality $\tau_{\M_{1}}\ge T$ established in the previous Proposition \ref{propupper}, so that $\tau^*_{\M_{1}}\le T^*$.
\end{proof}

Simple calculations show that the Legendre transform   of $T$ coincides with the formula \eqref{spec-M1}, i.e. 
\begin{equation}
\label{form1}
T^*(H) = \begin{cases} \      \eta \sigma_\mu(H/\eta) &  \mbox{ if }  \ \eta H_{\min}\le  H < \eta  {H_1} \\ 
\ \   \ \  {q_1} H & \mbox{ if }  \ \eta {H_1} \leq  H< {H_1}  \\   
    \ \ \ \sigma_\mu (H)  & \mbox{ if }   {H_1}\le H<® H_{\max}\end{cases}
\end{equation}
and $T^*(H)=-\infty$ otherwise. It remains to prove that $ \sigma_{M_{1}(\mu)}\ge T^*$.

 \subsection{Lower bound for the singularity spectrum of $\M_1$}

We distinguish the  three regions given by formula \eqref{spec-M1}
 that we claim to hold for $\si_{\M_1}$. Recall that  ${q_1}$ is the unique solution of the equation
$\tau_\mu(q)=0$,  ${H_1}= \tau_\mu'({q_1})$, and that this implies that $ {q_1}=\sigma_\mu'({H_1})$.

 \sk
 
 Observe first that by \eqref{majmin0}, necessarily $\dimi(\M_1,x) \geq \eta H_{\min} $ for every $x\in \zu^d$.
 
 \subsubsection{{\bf Case $\eta H_{\min} \leq H\leq \eta {H_1}$}}

Observe that we are in the increasing part of the spectrum. Heuristically, the value of the Hausdorff dimension of $\underline E_{\M_1}(H) = \{x:\dimi (\M_1,x)=H\}$ is obtained for the points $x$   having an approximation rate equal to $1/\eta$ (recall Definition \ref{def-approx}).

\mk

Let $\eta H_{\min} \leq H\leq \eta {H_1}$ and  set $\widetilde H= H/\eta$. If $\widetilde H=H_{\min}$ and $\sigma_\mu(H_{\min})=0$, since it is easy to construct a decreasing sequence 
$\widetilde\xi$ such that $\mathcal{A}_{H_{\min},1/\eta,\widetilde\xi}\neq\emptyset$, and for all $x\in \mathcal{A}_{H_{\min},1/\eta,\widetilde\xi}$ one has both $\dimi(\M_1,x) \geq \eta H_{\min}$ by the above observation and $\dimi(\M_1,x) \leq \eta H_{\min}\le H_{\min}$ by Proposition~\ref{majexpo}, we get $\underline E_{\M_1}(\eta H_{\min})\neq\emptyset$ hence $\dim \sigma_{\M_1}(\eta H_{\min})\ge 0=\sigma_\mu(H_{\min})=T^*(\eta H_{\min})$.

If $\widetilde H\neq H_{\min}$ or $\sigma_\mu(H_{\min})>0$,  by Theorem \ref{thubi} applied with $\widetilde H$ and $\de=1/\eta$, there exists a sequence $\widetilde \xi$ and a measure $m_{\widetilde H, 1/\eta}$ with the properties:
 \begin{itemize}
 \item
$ m_{\widetilde H, 1/\eta}(\mathcal{A}_{\widetilde H,1/\eta,\widetilde\xi} )>0$,
\item
$m_{\widetilde H,1/\eta}(E)=0 $  for any  Borel set $E$ such that $\dim E < \sigma_\mu(\widetilde H)/(1/\eta)= \eta \sigma_\mu(\widetilde H)$.
\end{itemize}
 
 Applying Proposition \ref{majexpo}, every point $x \in  \mathcal{A}_{\widetilde H,1/\eta,\widetilde\xi} $ satisfies
  $$\dimi (\M_1,x) \leq \frac{\widetilde H}{\min(1/\eta,1/\eta)} = \eta\widetilde H=H.$$
  
  We deduce (recall Definition~\ref{defnewsets}) that $m_{\widetilde H,1/\eta} \left(\underline{E}^{\le}_{\M_1}(H)\right )>0.$

  Further, note that the inequality $\tau_{\M_1}\ge T$ implies that $\tau_{\M_1}'(0^+)\ge T'(0)$.  Consequently, since $H\le T'(0)=\tau_\mu'(0)$,  
  one deduces from \eqref{upcap1} that for every $n\geq 1$,
  \begin{align*}
  \dim  \underline{E}^{\le}_{\M_1}(H-1/n)  \leq    (\tau_{\M_1} )^*(H-1/n)   \leq    \eta \sigma_\mu\left(\frac{H-1/n}{\eta} \right)
   =  \eta \sigma_\mu\left( \widetilde H  - \frac{1}{n{\eta}} \right)  <  \eta  \sigma_\mu( \widetilde H ).
  \end{align*} 
  Hence $m_{\widetilde H,1/\eta} \left(\underline {E}^{\le}_{\M_1}(H-1/n)\right )=0$. Finally, since 
  \begin{equation}
  \label{decomp}
   \underline {E}^{\le}_{\M_1}(H)=\underline E_{\M_1}(H) \cup \bigcup_{n\geq 1} \underline {E}^{\le}_{\M_1}(H-1/n)),
   \end{equation}
   one obtains $m_{\widetilde H,1/\eta} (\underline E_{\M_1}(H) )>0$, which implies the expected lower bound $\sigma_{\M_1} (H)  \geq \eta  \sigma_\mu( \widetilde H )$.  
  
 \subsubsection{{\bf Case $\eta {H_1}<  h \le  {H_1}$}}

 The proof is similar to that used  for $ \eta H_{\min} \leq H\le \eta {H_1}$, except that here, the value of the Hausdorff dimension of $\underline{E}_{\M_1}(H)$ is obtained for points $x$ which have an approximation rate between $1 $ and  $1/\eta$.

\mk

Let $\eta {H_1}<  H \le   {H_1}$ and $\de=\frac{{H_1}}{H} \in [1,1/\eta)$. One can apply Theorem \ref{thubi} with ${H_1}$ and $\de$. We reproduce   exactly the same arguments  as in the previous subsection. There exists a sequence $\widetilde \xi$ and a measure $m_{H_1, \de}$ with the properties:
 \begin{itemize}
 \item
$m_{H_1, \de}(\mathcal{A}_{H_1,\de,\widetilde\xi} )>0$,
\item
$m_{H_1,\de}(E)=0 $  for any Borel set $E$ such that  
$$\dim E < \frac{\sigma_\mu({H_1})}{\de}  =  \frac{ \sigma_\mu({H_1})}{    {H_1} } H   = {q_1} H,$$
where the last equality comes from the equalities $\tau_\mu({q_1})=0$, ${H_1}=\tau_\mu'({q_1})$ and  $ \sigma_\mu(\tau_\mu'(q))=\tau_\mu^*(\tau_\mu'(q))=q\tau_\mu'(q)-\tau_\mu(q)$ for all $q\in\mathbb R$.
\end{itemize}
 
 \sk
 
 Further, applying Proposition \ref{majexpo}, every point $x \in  \mathcal{A}_{{H_1} ,\de,\widetilde\xi} $ satisfies
  $$ \dimi (\M_1,x) \leq \frac{{H_1}}{\min(\de,1/\eta)} =\frac{{H_1}}{\de}= \frac{{H_1}}{{H_1}/H}= H  .$$
  Consequently, $
  m_{{H_1}, \de} \left(\underline {E}^{\le}_{\M_1}(H)\right )>0$, and by the same argument as above,  
  \begin{eqnarray*}
  \dim \underline {E}^{\le}_{\M_1}(H-1/n)   \leq    (\tau_{\M_1})^*(H-1/n)   \leq   {q_1}(H-1/n),
    \end{eqnarray*} 
  hence $m_{{H_1},\delta} \left(\underline {E}^{\le}_{\M_1}(H-1/n)\right )=0$. Finally, using \eqref{decomp},   necessarily $m_{H_1,\delta} (\underline {E}_{\M_1}(H))>0$, hence $\sigma_{\M_1}(H)  \geq {q_1} H$,   
which again is the expected lower bound.

 \subsubsection{{\bf Case $  H\geq H_{1}$}} We will need the following proposition.  Recall that a Borel probability measure $\nu$ on $\R^d$ is said to be exact dimensional with dimension $D\ge 0$ if $\nu(E_\nu(D))=1$, and that in this case $\nu(E)=0$ for all Borel sets such that $\dim E>D$. 
\begin{proposition}\cite[Theorem 5(1)]{BS2020}\label{proplast} With probability 1, for all $H\in [H_{\min},H_{\max}]$, there exists an exact dimensional Borel probability measure $\mu_H$ of dimension $\sigma_\mu(H)$ such that $$\mu_{H} \left(\{x\in[0,1]^d: \, \dim(\mu,x)=H\} \bigcap\{x\in[0,1]^d:\, \delta_x=1\}\right)  = \mu_{H} \left( \{x\in[0,1]^d:\,   \dim(\mu,x)= H\}\right).$$
\end{proposition}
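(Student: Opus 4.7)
\textbf{Step 1 (construction of $\mu_H$).} I would begin with a classical thermodynamic-formalism construction. For $H\in (H_{\min},H_{\max})$, let $q_H$ be the unique real with $\tau_\mu'(q_H)=H$---well defined and analytic in $H$ since $\tau_\mu$ is strictly concave analytic. Writing $\mu\approx\nu^\gamma$ for a Gibbs measure $\nu$ on $\Sigma$ with H\"older potential $\varphi$, take $\mu_H$ to be the pushforward by $\pi$ of the Gibbs measure associated with the potential $\gamma q_H\varphi$. By the Ruelle--Bowen transfer operator machinery combined with Shannon--McMillan--Breiman applied to $\varphi$, $\mu_H$ is quasi-Bernoulli, exact dimensional of dimension $\sigma_\mu(H)=q_H H-\tau_\mu(q_H)$, and satisfies $\mu_H(E_\mu(H))=1$. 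For $H\in\{H_{\min},H_{\max}\}$, I would take weak limits as $q_H\to\pm\infty$, or construct $\mu_H$ by hand (e.g.\ as a Dirac mass on a point of the right local dimension) when $\sigma_\mu(H_{\min/\max})=0$.

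\textbf{Step 2 (Borel--Cantelli + Fubini).} Fix $\delta>1$ and a deterministic $x\in[0,1]^d$. The number of $w\in\Sigma_j$ with $|x-x_w|\le 2^{-j\eta\delta}$ is at most $C\cdot 2^{jd(1-\eta\delta)_+}$, and each is a survivor independently with probability $2^{-jd(1-\eta)}$, so
\[
\mathbb{P}\Big(x\in\bigcup_{w\in\Sigma_j\cap\mathcal{S}_j(\eta)}B(x_w,\ell_w^\delta)\Big)\le C\big(2^{jd\eta(1-\delta)}+2^{-jd(1-\eta)}\big),
\]
which is summable in $j$ for $\delta>1$; Borel--Cantelli yields $\mathbb{P}(x\in\mathcal{A}_\delta)=0$ for \emph{every} $x$. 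Fubini then gives $\mathbb{E}[\mu_H(\mathcal{A}_\delta)]=\int\mathbb{P}(x\in\mathcal{A}_\delta)\,d\mu_H(x)=0$, so $\mu_H(\mathcal{A}_\delta)=0$ almost surely. A countable union over $\delta\in\{1+1/n:n\ge 1\}$ yields $\mu_H(\{\delta_x>1\})=0$ a.s., which combined with $\mu_H(E_\mu(H))=1$ from Step~1 gives the desired identity for each fixed $H$.

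\textbf{Step 3 (uniformity in $H$).} To upgrade ``a.s.\ for each $H$'' to ``a.s.\ for all $H$'', I would fix a countable dense $\{H_k\}\subset[H_{\min},H_{\max}]$ and work on the full-probability event $\Omega_0=\bigcap_{k,n}\{\mu_{H_k}(\mathcal{A}_{1+1/n})=0\}$. For arbitrary $H$, analyticity of the thermodynamic formalism in $q$ yields weak continuity of $H\mapsto\mu_H$; combined with the monotone decomposition $\mathcal{A}_\delta=\bigcap_J U_J$ into open sets $U_J=\bigcup_{j\ge J}\bigcup_{w\in\mathcal{S}_j(\eta)}B(x_w,\ell_w^\delta)$ and quantitative uniform-in-$k$ bounds $\mathbb{E}[\mu_{H_k}(U_J)]\to 0$ extracted from the Step~2 estimate, one should pass to the limit. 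I expect this step to be the main obstacle: $\mathcal{A}_\delta$ is a $G_\delta$ rather than open or closed, so weak convergence alone does not transparently transfer nullity from a dense set of $H$ to all $H$. A cleaner workaround would be to replace the deterministic Gibbs construction by a \emph{random} Gibbs-type cascade $\mu_H^\omega$ built to deterministically avoid $\ell_w^{1+1/n}$-neighborhoods of all survivors at every scale---feasible because surviving vertices at generation $j$ have density only $2^{-jd(1-\eta)}$, leaving ample room for a cascade of dimension $\sigma_\mu(H)\le d$ to enforce $\delta_x=1$ by construction while preserving exact dimensionality.
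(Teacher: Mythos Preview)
This proposition is cited from \cite{BS2020} and the present paper does not reprove it; the only hint given is the remark immediately following the statement, namely that for $H\in(H_{\min},H_{\max})$ one may take $\mu_H$ to be the deterministic Gibbs measure associated with the potential $q\gamma\varphi$ where $\tau_\mu'(q)=H$. Your Step~1 reproduces exactly this remark, and your Step~2 is a correct and standard Fubini/Borel--Cantelli argument: the bound $\mathbb P\big(x\in\bigcup_{w\in\mathcal S_j(\eta)}B(x_w,\ell_w^\delta)\big)\le C\,2^{jd\eta(1-\delta)}$ (when $\eta\delta<1$; otherwise $C\,2^{-jd(1-\eta)}$) is right, it is uniform in $x$, and summability gives $\mathbb P(x\in\mathcal A_\delta)=0$ for every $x$ and every $\delta>1$, hence $\mu_H(\{\delta_x>1\})=0$ almost surely for each fixed~$H$.

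The genuine gap is Step~3, and you correctly locate it. Your first proposal---passing to a countable dense set of $H$ and invoking weak continuity of $H\mapsto\mu_H$---does not close it, for the reason you yourself give: $\mathcal A_\delta$ is a $G_\delta$ set and weak convergence of measures gives no control on $\mu_H(\mathcal A_\delta)$ from $\mu_{H_k}(\mathcal A_\delta)=0$. The quantitative bound $\mathbb E[\mu_{H_k}(U_J)]\to 0$ you mention is uniform in $k$ (indeed uniform over \emph{all} probability measures, since $\mathbb P(x\in U_J)$ does not depend on the measure), but this only gives $\mu_H(U_J)\to 0$ in $L^1(\mathbb P)$, not almost surely uniformly in $H$; you still need to exchange the a.s.\ limit with the supremum over an uncountable family. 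Your second proposal---replacing the deterministic $\mu_H$ by a random cascade that is built to avoid neighbourhoods of survivors---is in the right spirit and is closer to how such uniform statements are typically obtained, but as written it is a sketch rather than an argument: one must explain why the percolation/avoidance constraint, which removes a positive fraction of mass at every scale, still leaves a measure of the correct exact dimension $\sigma_\mu(H)$, and why this can be done simultaneously for all $H$ on a single full-probability event. Note also that the paper's remark asserts that the \emph{deterministic} Gibbs measure already works, which your cascade approach would not recover.
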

Let us mention that if $\mu\approx\nu^\gamma$ where $\nu$ is a Gibbs measure associated with a H\"older potential $\varphi$, then for $H\in(H_{\min},H_{\max})$, $H=\tau_\mu'(q)$ for some $q$ and $\mu_H$ can be taken equal to the Gibbs measure associated with the potential $q\gamma\varphi$. 

\sk

Now, if $H\ge H_1$, by Proposition \ref{prop2}, any $z\in \{x\in[0,1]^d:\,  \dim(\mu,x)=H\} \bigcap\{x\in[0,1]^d:\, \delta_x=1\}$ satisfies $\dimi(\M_1,z) =  H $.
Hence $\mu_H$ is supported on $\underline E_{\Mr}(H)$, so $\sigma_{\M_1}(H) \geq \sigma_\mu( H)$, which is the expected lower bound.

\section{Proof of Theorem~\ref{main2}} \label{pfth2}

Here, $\ro\in  (0,1)$ and $\M_{\ro,\eta}$ is simply denoted  by $\Mr$. 

Since $\ro <1$,   there is a lot of  redundancy since the dilation parameter over-compensates the sampling.  

Recalling Definition \ref{defMetarho}, each $\Mr (I)$, for $I\in \mathcal{D}_j$, can be written as
 $$\Mr(I) = \sup_{I'\subset I, \ p_{I'}=1} \mu((I')^\reta),$$ 
and let us introduce by analogy with \eqref{defleaderM}
\begin{equation}
\label{defleaderMr}
\wMr(I) = \max(\Mr(I): I\in \mathcal{N}(I_j(x))).
\end{equation}
Computing $\underline \dim(\Mr,x)$ amounts to estimating the behavior of $\wMr(I_j(x))$ when $j$ tends to infinity,.

\subsection{Some  auxiliary notations and properties}
\label{sec5-1}

We omit the subscript $\mu$ in the following definitions, although the quantities depend on $\si_\mu$.
\begin{definition}
For all $H\in [H_{\min},\tau'(0)]$ and $\delta \ge 1$, define 
\begin{align}
\label{defdeltarho} \delta_\varrho(H)& = \frac{d(1-\varrho)+\varrho\sigma_\mu(H)}{\sigma_\mu(H)}\\
\label{defthetarho}
 \theta_\ro(H) & =    \frac{\varrho H}{\delta_\ro (H)}.
 \end{align}
 \end{definition}
 
 Observe that: 
 \begin{itemize}
 \item
The mapping $H\mapsto \delta_\ro(H)$ decreases from $\frac{d(1-\varrho)+\varrho\sigma_\mu(H_{\min})}{\sigma_\mu(H_{\min})}$ to $1$ over  $[H_{\min},\tau_\mu'(0)]$ (in particular, $\delta_\ro(H)\geq 1$).
\item
The mapping $H\mapsto \theta_\ro(H)$  increases continuously from $\frac{\varrho H_{\min}\sigma_\mu(H_{\min})} {d(1-\varrho)+\varrho\sigma_\mu(H_{\min})} $ to $\varrho \tau_\mu'(0)$.
\item
Since $\delta_\ro(H)\geq 1$, $\theta_\ro(H) \leq \ro H$, with equality only if $\delta_\ro(H)=1$, i.e. $H=\tau_\mu'(0)$.
\end{itemize}

Additional parameters are needed (they were defined in the introduction, but we give more details now):
\begin{itemize}
 \sk
 \item 
Let   ${q_\ro}$ be  the unique $q$  such that 
\begin{equation}
\label{deftaurho}
\tau_\mu({q_\ro})=\frac{d(1-\varrho)}{\varrho},
\end{equation}
 and set 
 $${H_\ro}=\tau_\mu'({q_\ro}).$$
  Recall that this ${q_\ro}$ exists since $\tau_\mu$ is an homeomorphism of $\R$.
 
 \noindent 
The mapping 
\begin{equation}
\label{funcro}
H\in[H_{\min},H_{\max}]\mapsto \frac{d(1-\varrho)+\varrho \sigma_\mu(H)}{H}
\end{equation}  is increasing on $[H_{\min},{H_\ro}]$ and   decreasing on $[{H_\ro}, H_{\max}]$. Indeed, the derivative of this mapping   vanishes when $\ro H \si'_\mu(H) - d(1-\ro)-\ro \si_\mu(H)=0$, i.e. $ H \si'_\mu(H) -    \si_\mu(H) = \frac{d(1-\varrho)}{\varrho}$. The fact that $\tau_\mu=\si_\mu^*$ allows to conclude.   So the mapping reaches its maximum at  ${H_\ro}$.

 \sk
 \item 
A direct calculation  shows that 
\begin{equation}
\label{formula-rho}
 \frac{d(1-\varrho)+\varrho \sigma_\mu({H_\ro})}{\varrho {H_\ro}}=\frac{\sigma_\mu({H_\ro})}{\theta_\ro ({H_\ro})}={q_\ro}.
\end{equation}
Indeed, the first equality comes from   \eqref{defdeltarho}  and \eqref{defthetarho}. The second one follows from \eqref{deftaurho} and the fact that, by Legendre transform, $\sigma_\mu({H_\ro}) = {q_\ro} {H_\ro}- \tau_\mu({q_\ro}) $.
 \end{itemize}

 Next, as explained in Definition \ref{defDetarho}, three cases must be distinguished depending on the behavior of the multifractal spectrum $\si_\mu$ of $\mu$.
 
 \begin{itemize}
 \sk
 \item
 
{\bf Case 1.   $\sigma_\mu(H_{\min}) > \frac{d(1-\varrho)}{1/\eta-\varrho}$:} 
For every  $H\in [H_{\min},\tau_\mu'(0)]$,  $\sigma_\mu(H)>\frac{d(1-\varrho)}{1/\eta-\varrho}$, and $\delta_\ro(H) \leq \delta_\ro(H_{\min})= \frac{d(1-\varrho)+\varrho\sigma_\mu(H_{\min})}{\sigma_\mu(H_{\min})}    < 1/\eta$. In particular, $\ro\eta H_\ro < \theta_\ro(H_\ro)$.

 Notice that $\frac{d(1-\varrho)}{1/\eta-\varrho}$  is always smaller than $d$, and that it decreases from $\eta d$ to $0$ as $\varrho$ increases from $0$ to $1$.

As a conclusion, 
\begin{align}
\label{ordre1}
 \varrho \eta H_{\min}<  \varrho \eta {H_\ro} < \thetar ({H_\ro}) <\ro H_\ro <\varrho\tau_\mu'(0)\le \varrho H_{\max},
 \end{align}
 which explains the regions in formula \eqref{spec1}.
 \sk
 \item

{\bf Case 2.    $\sigma_\mu(H_{\min})\le  \frac{d(1-\varrho)}{1/\eta-\varrho}$:}    By continuity of $\si_\mu$ in its increasing part, there exists  a (necessarily unique) $H=H_{\ro,\eta}\in [H_{\min},\tau_\mu'(0)]$ such that 
\begin{equation}
\label{defHmurho}
\sigma_\mu(H_{\ro,\eta})=\frac{d(1-\varrho)}{1/\eta-\varrho}.
\end{equation}
This is equivalently written as 
$$\delta_\ro(H_{\ro,\eta})=1/\eta.$$
Note that $H_{\ro,\eta}=H_\ro$ if and only if $q_\ro\tau_\mu'(q_\ro)=d(1-\varrho)/(1-\ro\eta)$. 
\sk

 \begin{itemize}
 \sk
 \item {\bf Case 2.a. $H_{\ro,\eta}< {H_\ro}$:}  here $\delta_\ro ({H_\ro})< 1/\eta$, so  $\varrho\eta H_{\ro,\eta}< \varrho\eta {H_\ro} <  \theta_\ro(H_\varrho)$, and 
\begin{align}
\label{ordre2}
\varrho \eta H_{\min} <  \varrho \eta H_{\ro,\eta} <  \varrho \eta {H_\ro} < \thetar ({H_\ro}) <\ro H_\ro< \varrho\tau_\mu'(0) 
 \le \varrho H_{\max},
 \end{align}
  which explains the   regions in formula \eqref{spec2}.

\sk\item 
{\bf Case 2.b. $H_{\ro,\eta} \geq {H_\ro}$:} now   $\delta_\ro ({H_\ro})\geq 1/\eta$, so $  \theta_\ro(H_\ro) \le  \varrho\eta {H_\ro} \le  \varrho\eta H_{\ro,\eta}$, and the proof will show that only the value of $\varrho\eta H_{\ro,\eta}$ matters. Hence, the relevant regions in \eqref{spec3} are
\begin{align}
\label{ordre3}
\varrho \eta H_{\min} <  \varrho\eta H_{\ro,\eta} < \varrho\tau_\mu'(0)  \le \varrho H_{\max} .
\end{align}

\end{itemize}\end{itemize}

One has the following simple rewriting of \eqref{defdeltarho}:
\begin{equation}
\label{rewriting}
\displaystyle\frac{d(1-\varrho)+\varrho \sigma_\mu(\theta_\ro^{-1}(H))}{\delta_\ro (\theta_\ro^{-1}(H))}=\sigma_\mu(\theta_\ro^{-1}(H)) .
\end{equation}
All these formulas will be useful when looking for the optimal values of the parameters to compute the Hausdorff dimensions of the level sets $E_{\Mr}(H)$ in the corresponding intervals of values for $H$.

\mk

We conclude this section with the proof of Proposition~\ref{regularityDetarho}.

\begin{proof}[Proof of Proposition~\ref{regularityDetarho}]The regularity properties (continuity, analyticity on the region's interior) of $D_{\mu,\ro,\eta}$ are easily checked. It is also immediate that among the intervals distinguished in the formulas \eqref{spec1}, \eqref{spec2} and \eqref{spec3} providing $D_{\mu,\ro,\eta}$, the only one over which the concavity of $D_{\mu,\ro,\eta}$ is not clear is the interval where $\sigma_\mu(\theta_\ro^{-1}(H))$. Once this concavity property is verified, the global concavity of $D_{\mu,\ro,\eta}$ is immediate for the Cases 1 and $2.b.$, and it follows from the fact that $D_{\mu,\ro,\eta}'(\ro\eta H_{\ro,\eta}^-)=\eta^{-1}D_{\mu,\ro,\eta}'(\ro\eta H_{\ro,\eta}^+)>D_{\mu,\ro,\eta}'(\ro\eta H_{\ro,\eta}^+)$ in Case $2.a$. 

To prove that    $H\mapsto\sigma_\mu(\theta_\ro^{-1}(H))$ is concave over $[H_\varrho,\ro\tau_\mu'(0)]$ (and so over $[H_{\varrho,\eta},\ro\tau_\mu'(0)]$ in Case 2.b.), one rather shows that   its reciprocal map $F=\theta_\ro\circ\sigma_\mu^{-1}$ is convex over $[\sigma_\mu(H_\ro),d]$. Indeed, one sees that $F(u)=\varrho\frac{u\sigma_\mu^{-1}(u)}{d(1-\ro)+\ro u}$, and a calculation shows that 
$$
F''(u)=\frac{\ro}{(d(1-\ro)+\ro u)^3}(G(u)+2d(1-\ro)h(u)),
$$
where, setting $g=\sigma_\mu^{-1}$, $h(u)=g'(u)(d(1-\ro)+\varrho u)-\ro g(u)$, and $G$ is a polynomial function with positive coefficients in the variables $u,g(u),g'(u),g''(u)$, which are all non-negative over $[\sigma_\mu(H_\ro),d]$ (using the concavity of $\si_\mu$). Moreover, setting $H=g(u)$, one has $g'(u)=\si_\mu'(H)^{-1}$. So $h(u)\ge 0$ if and only if $d(1-\ro)\ge \ro (\sigma_\mu'(H)H-\sigma_\mu(H))$, that is $H=\tau_\mu'(q)$ with $\tau_\mu(q)\le d(1-\ro)/\ro)$, i.e. $H\ge H_\ro$. Consequently, $F''$ is positive over $[\sigma_\mu(H_\ro),d]$, hence the desired result. 
\end{proof}

\subsection{Rewriting the formula for $D_{\mu,\ro,\eta}$}

Let us define the mapping (again, we omit the dependence on $\mu$ to keep the notations   as light as possible)
\begin{align}
\label{defmrho}
m_\varrho(H,\delta)& = \min\Big  (\frac{d(1-\varrho)+\varrho\sigma_\mu(H)}{\delta},\sigma_\mu(H) \Big ), \ H\in\R,\, \delta\ge 1.
 \end{align}

The main proposition of this section,  key to compute the multifractal spectrum of $\Mr$, is the following alternative formula for $D_{\mu,\ro,\eta}$. This formula  reflects the intuition that     the fact that $\Mr$ has at $x$ a local dimension equal to $H$,  results from a compromise between the local behavior of $\mu$ at $x$ and the approximation rate of $x$ by the surviving vertices.

\begin{proposition}\label{pro5.1}
If $H\ge 0$, set
\begin{equation}
\label{defdtilde}
\widetilde D_{\mu,\ro,\eta}(H)=\max\Big \{ m_\ro(u,\delta) : 0\leq  \frac{\varrho u}{\delta}\le H,\  \displaystyle 1\le \delta\le 1/\eta\Big \}. 
\end{equation}
One has $ D_{\mu,\ro,\eta}=\widetilde D_{\mu,\ro,\eta}$ over the interval $[0, \varrho \tau_\mu'(0)]$.
\end{proposition}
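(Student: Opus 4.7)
The plan is to solve the maximization in \eqref{defdtilde} explicitly and then match its value piece by piece with the formulas of Definition~\ref{defDetarho}. I fix $H\in[0,\ro\tau_\mu'(0)]$. Since $\sigma_\mu(u)=-\infty$ outside $[H_{\min},H_{\max}]$ and, on the decreasing branch $[\tau_\mu'(0),H_{\max}]$, any admissible $(u,\delta)$ is dominated in $m_\ro$ by $(\tau_\mu'(0),\delta')$ with a smaller $\delta'$, I may restrict to $u\in[H_{\min},\tau_\mu'(0)]$, where $\sigma_\mu$ is increasing. The admissibility constraints $\ro u\le\delta H$ and $1\le\delta\le 1/\eta$ then reduce to $u\le H/(\ro\eta)$ together with $\max(1,\ro u/H)\le\delta\le 1/\eta$.

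I next observe that $m_\ro(u,\delta)=\sigma_\mu(u)$ precisely when $\delta\le\delta_\ro(u)$, so the maximization over $\delta$ splits into two regimes. \emph{Strategy A}, available exactly when $\theta_\ro(u)\le H$, achieves $m_\ro=\sigma_\mu(u)$ with any admissible $\delta\le\delta_\ro(u)$; \emph{Strategy B}, forced when $\theta_\ro(u)>H$, sets $\delta=\ro u/H$ and gives
$$
m_\ro(u,\ro u/H)=\frac{H(d(1-\ro)+\ro\sigma_\mu(u))}{\ro u}=\frac{H}{\ro}f(u),
$$
where $f(u):=(d(1-\ro)+\ro\sigma_\mu(u))/u$. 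Identity \eqref{rewriting} shows that the two strategies agree at $u=\theta_\ro^{-1}(H)$. Since $\sigma_\mu$ is increasing on $[H_{\min},\tau_\mu'(0)]$, Strategy A attains its maximum at $u^A_H:=\min(\theta_\ro^{-1}(H),H/(\ro\eta))$, with value $\sigma_\mu(u^A_H)$. By the unimodality of $f$ with peak at ${H_\ro}$ recalled in Section~\ref{sec5-1}, Strategy B is maximized at $u={H_\ro}$ when this point lies in its feasibility window $(\theta_\ro^{-1}(H),H/(\ro\eta)]$, and at the relevant endpoint otherwise; at $u={H_\ro}$ the identity \eqref{formula-rho} turns the value into the linear expression $\frac{\sigma_\mu({H_\ro})}{\theta_\ro({H_\ro})}H$.

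It then remains to run the case analysis. A direct computation shows that $\theta_\ro^{-1}(H)\le H/(\ro\eta)$ is equivalent to $\sigma_\mu(H/(\ro\eta))\ge d(1-\ro)/(1/\eta-\ro)$, which is precisely the dichotomy separating Case~1 from Case~2 of Definition~\ref{defDetarho}, and within Case~2 the subregion $H\ge\ro\eta H_{\ro,\eta}$ from $H<\ro\eta H_{\ro,\eta}$ (using $\theta_\ro(H_{\ro,\eta})=\ro\eta H_{\ro,\eta}$). On each subinterval of $[0,\ro\tau_\mu'(0)]$ listed in \eqref{spec1}, \eqref{spec2}, or \eqref{spec3}, I locate ${H_\ro}$ relative to the feasibility window $(\theta_\ro^{-1}(H),H/(\ro\eta)]$ and invoke the unimodality of $f$ to decide whether Strategy A or Strategy B dominates, checking that the winner reproduces the prescribed piece of $D_{\mu,\ro,\eta}(H)$. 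The main obstacle is the bookkeeping in Case~2a, where five subintervals appear: Strategy A must be shown to dominate on $[\ro\eta H_{\min},\ro\eta H_{\ro,\eta})$ (with $u^A_H=H/(\ro\eta)$, giving $\sigma_\mu(H/(\ro\eta))$) and on $[\theta_\ro({H_\ro}),\ro\tau_\mu'(0))$ (with $u^A_H=\theta_\ro^{-1}(H)$, giving $\sigma_\mu(\theta_\ro^{-1}(H))$), while Strategy B dominates in between and produces either the concave piece $\eta(d(1-\ro)+\ro\sigma_\mu(H/(\ro\eta)))$ (when ${H_\ro}$ lies outside the feasibility window and the right endpoint is optimal) or the linear piece $\sigma_\mu({H_\ro})H/\theta_\ro({H_\ro})$ (when ${H_\ro}$ lies in its interior).
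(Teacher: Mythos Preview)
Your proposal is correct and follows essentially the same approach as the paper: both arguments restrict to $u\in[H_{\min},\tau_\mu'(0)]$, split the optimization according to whether $\delta$ lies below or above the threshold $\delta_\ro(u)$ (your Strategy A/B is the paper's (i)/(ii)), and then exploit the unimodality of $u\mapsto (d(1-\ro)+\ro\sigma_\mu(u))/u$ with peak at $H_\ro$ together with identities \eqref{formula-rho} and \eqref{rewriting} to run the same case analysis. One minor slip: when you dominate a point on the decreasing branch by $(\tau_\mu'(0),\delta')$, the same $\delta'=\delta$ already works---a strictly smaller $\delta'$ is not needed (and could fail the constraint $\delta'\ge 1$).
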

\begin{proof} 

Observe that when $u<H_{\min}$, $\si_\mu(u)=-\infty$,  so $m_\varrho(u,\delta)=-\infty$.  So it is enough to consider  $u\in [H_{\min},H_{\max}]$.

\sk

{\bf Fact:} When $\delta=\delta_\ro(u)$ and $u\leq \ro\tau_\mu'(0)$ then $\si_\mu(u)=\frac{d(1-\varrho)+\varrho\sigma_\mu(u)}{\delta}$. When $\delta<\delta_\ro(u)$, the minimum in \eqref{defmrho}  with respect to $u$ is reached by $\si_\mu(u)$, while when $\delta\geq \delta_\ro(u)$ it is reached by $\frac{d(1-\varrho)+\varrho\sigma_\mu(u)}{\delta}$.

\mk

 {\bf Case 1. $\sigma_\mu(H_{\min}) > \frac{d(1-\varrho)}{1/\eta-\varrho}$:} 
  Let $H\in [0,\tau_\mu'(0)]$. 

For all $u\in [H_{\min},\tau_\mu'(0)]$,  $\sigma_\mu(u)\geq \sigma_\mu(H_{\min}) > \frac{d(1-\varrho)}{1/\eta-\varrho}$, so 
  $$\delta_\ro(u)  = \ro +\frac{d(1-\ro)}{\si_\mu(u)}<1/\eta .$$
  
(i) When $1\le \delta\le \delta_\ro (u)$ is fixed,  by the fact above, the minimum   in \eqref{defmrho}  is reached  at $\si_\mu(u)$. So,  when looking  for the maximum in \eqref{defdtilde}, one tries to maximize $\si_\mu(u)$ for  $H_{\min} \leq  \frac{\varrho u}{\delta}\le H,\  \displaystyle 1\le \delta\le \delta_\ro(u)$.  
  This maximum is reached  for some $u$  when $\delta=\delta_\ro(u)$, i.e. one  has to maximize $\si_\mu(u)$ for  $H_{\min} \leq  \frac{\varrho u}{\delta_\ro(u)}\le H$.   The maximum is reached when $ \frac{\varrho u}{\delta_\ro(u)} =H$, i.e. when $u=\theta_\ro^{-1}(H)$ (the reciprocal image of $H$ by $\theta_\ro$), and its value is $\si_\mu( \theta_\ro^{-1}(H))$.

  \mk
  
(ii)  Now, when   $ \delta_\ro (u) \leq \delta \leq 1/\eta$, the minimum   in \eqref{defmrho}  is reached  at $\frac{d(1-\varrho)+\varrho\sigma_\mu(u)}{\delta}$. So one has to maximize $\frac{d(1-\varrho)+\varrho\sigma_\mu(u)}{\delta} $ for  $H_{\min} \leq  \frac{\varrho u}{\delta}\le H,\  \displaystyle  \delta_\ro(u) \le \delta\le 1/\eta $.      Given $\delta$, this is maximal when $\varrho u/\delta=H$, and it equals $H\frac{d(1-\varrho)+\varrho\sigma_\mu(u)}{\ro u}$. Since $ \delta_\ro (u) \leq \delta$, this  implies that $\ro u/\delta_\ro(u) \geq H$, i.e. $u\geq \theta_\ro^{-1}(H)$. In addition, since $\delta \leq 1/\eta$, the optimal $u$ satisfies  $ u=H/(\ro\eta)$. As a conclusion,  the maximum in  \eqref{defdtilde} in this range of $\delta$'s  can be written 
$$\max\left\{H\frac{d(1-\varrho)+\varrho\sigma_\mu(u))}{\varrho u}: \theta_\ro ^{-1}(H)\le u\le H/\varrho\eta\right\}.$$ 
Recall that    the mapping  $u\mapsto \frac{d(1-\varrho)+\varrho\sigma_\mu(u))}{ u}$ increases over $[H_{\min},{H_\ro}]$ and decreases over $[{H_\ro},\tau_\mu'(0)]$.  Hence, the value of the above maximum gives birth to three phases, depending on the position of $H_\ro$ with respect to the interval $[\theta_\ro ^{-1}(H), H/\varrho\eta]$ (or by symmetry, on the position of $H$ with respect to the interval $[\ro\eta H_\ro,\theta_\ro(H_\ro)]$).   
 
 More precisely:
 \begin{itemize}
 \item when $ \ro\eta H_{\min} \leq H\le \varrho\eta {H_\ro}$, the maximum  is reached when  $u=H/\varrho\eta$ (the corresponding  $\delta$ is $1/\eta$), and it equals $\eta (d(1-\ro)    +\varrho\sigma_\mu(H/\varrho\eta))$.
 \item when $   \varrho\eta {H_\ro}\le H\le \theta_\ro ({H_\ro}) $, it is reached at $u={H_\ro}$ (with $\delta= \delta_\ro({H_\ro}){H_\ro}/H$), and it equals $H\frac{d(1-\ro) +\varrho \sigma_\mu({H_\ro})}{\varrho {H_\ro}}=H \frac{\sigma_\mu(\theta_\ro ({H_\ro}))}{\theta_\ro(H_\varrho)}$ (here, we used \eqref{formula-rho}).
 \item
  when $ \theta_\ro ({H_\ro})\le H\le \varrho\tau_\mu'(0)$, it is reached at $u=  \theta_\ro ^{-1}(H)$ (with $\delta =\delta_\ro(H_\ro)$), and it equals $H\frac{ d(1-\ro) +\varrho\sigma_\mu(\theta_\ro ^{-1}(H))}{\varrho \theta_\ro ^{-1}(H)}= \sigma_\mu(\theta_\ro ^{-1}(H))$, where the last equality comes from \eqref{defthetarho}.
\end{itemize}
Putting the previous estimates together shows that $\widetilde D_{\mu,\ro,\eta}$ indeed coincides with $D_{\mu,\ro,\eta}$, given by  formula \eqref{spec1}.

\medskip

\noindent
{\bf Case 2.a.    $\sigma_\mu(H_{\min})\le  \frac{d(1-\varrho)}{1/\eta-\varrho}$ and  $H_{\ro,\eta}< {H_\ro}$:}  

\sk

Recall the definition  \eqref{defHmurho} of $H_{\ro,\eta}$, as well as the series of inequalities \eqref{ordre2}.

\mk

(i) When $ \ro\eta H_{\min} \leq H\le \varrho\eta H_{\ro,\eta}$, all those $u\in [H_{\min},\tau_\mu'(0)]$ such that $\varrho u/\delta\le H$ with $1\le \delta\le 1/\eta$ satisfy necessarily  $u\le   H /(\ro\eta) \leq H_{\ro,\eta}$.   Hence the maximum in \eqref{defdtilde} is attained when $u= H/\varrho\eta$ and $\delta=1/\eta$, i.e. for the largest possible $u$, and it equals $\sigma_\mu(H/\varrho\eta)$.

\mk

(ii) When  $H> \varrho\eta H_{\ro,\eta}$, remark that  all the values $\sigma_\mu(u)$ for $\varrho u/\delta\le H$, $1\le \delta\le 1/\eta$ and $u\le H_{\ro,\eta}$  are less than  $\sigma_\mu(H_{\ro,\eta})$, so they will not contribute to find the maximum in \eqref{defdtilde}. So, the discussion reduces to  {\bf Case 1.}, with the same conclusions.

\sk

This shows  that $\widetilde D_{\mu,\ro,\eta}$  and $D_{\mu,\ro,\eta}$ given by formula \eqref{spec2} coincide in this case.

\medskip

\medskip

\noindent
{\bf Case 2.b.    $\sigma_\mu(H_{\min})\le  \frac{d(1-\varrho)}{1/\eta-\varrho}$ and  $H_{\ro,\eta} \geq {H_\ro}$:}  

\sk

Recall  \eqref{ordre3}. The proof is essentially the same as the previous one, except that in this case, only $H_{\ro,\eta}$ matters:

(i) When $ \ro\eta H_{\min} \leq H\le \varrho\eta H_{\ro,\eta}$, by the same argument as above, all those $u\in [H_{\min},\tau_\mu'(0)]$ such that $\varrho u/\delta\le H$ with $1\le \delta\le 1/\eta$ satisfy necessarily  $u\le   H /(\ro\eta) \leq H_{\ro,\eta}$.   Hence the maximum in \eqref{defdtilde} is  $\sigma_\mu(H/\varrho\eta)$.

\mk

(ii) When  $H> \varrho\eta H_{\ro,\eta}$, the same argument as in {\bf Case 2.a (ii)} shows that  the maximal value is $\sigma_\mu(\theta_\ro ^{-1}(H))$.  
 \end{proof}

\subsection{Estimations of the local dimensions}

By analogy with Lemma \ref{lem3} and Proposition \ref{prop2}, the following holds.
\begin{lemma}
\label{lem3bis}
With probability 1:\begin{enumerate}
\item
  there exists a positive sequence $(\widetilde \ep_j)_{j\geq 1}$ such that for every $w\in \Sigma^*$,  
\begin{equation}
\label{majmin0bis}
 \mu(I_w^\ro) 2^{-|w|\widetilde \ep_{|w|} } \leq    \Mr(I_w)    \leq  \     \mu(I^\reta_w).
\end{equation}
\item for every $x\in \zu$, if $\delta_x=1$, then  $\dimi (\Mr,x))= \varrho \dimi (\mu,x)$.
\end{enumerate}
\end{lemma}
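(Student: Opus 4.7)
The plan is to adapt the arguments of Lemma \ref{lem3} and Proposition \ref{prop2}, tracking the effect of the dilation parameter $\varrho$ on each estimate.

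\textbf{(1) Two-sided bounds.} For the upper bound $\Mr(I_w)\le \mu(I_w^{\varrho\eta})$, I would note that any $I'\subset I_w$ with $p_{I'}=1$ has $|I'|\le |I_w|$, so $(I')^{\varrho\eta}$ is the dyadic ancestor of $I'$ at generation $\lfloor\varrho\eta |I'|\rfloor\ge \lfloor\varrho\eta|I_w|\rfloor$, hence contained in $I_w^{\varrho\eta}$; monotonicity of $\mu$ then yields $\mu((I')^{\varrho\eta})\le \mu(I_w^{\varrho\eta})$.

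For the lower bound, Lemma \ref{lem1} will provide, for $j=|w|$ large enough, a surviving vertex $w'\in\mathcal{S}_{\lfloor j/(\eta-\ep_j)\rfloor}(\eta)$ with $I_{w'}\subset I_w$, so that $\Mr(I_w)\ge\mu(I_{w'}^{\varrho\eta})$. The cube $I_{w'}^{\varrho\eta}$ has generation $\lfloor\varrho\eta|w'|\rfloor$, which exceeds $\lfloor\varrho j\rfloor$ by at most $O(j\ep_j/\eta)$, and it is contained in $I_w^{\varrho}$. I would then apply the quasi-Bernoulli property \eqref{quasib} across this inclusion and use the uniform lower bound \eqref{uniform} on the $\mu$-measure of the intermediate dyadic cube (of generation $O(j\ep_j)$) to deduce $\mu(I_w^\varrho)\cdot 2^{-j\widetilde\ep_j}\le\Mr(I_w)$ for a suitable sequence $\widetilde\ep_j\to 0$.

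\textbf{(2) Local dimension when $\delta_x=1$.} I would follow the scheme of Proposition \ref{prop2}, computing $\wMr(I_j(x))$ by maximising $\mu((I')^{\varrho\eta})$ over $I'\subset I\in\mathcal{N}(I_j(x))$ with $p_{I'}=1$. Since $\delta_x=1$, for any fixed $\ep>0$ no surviving $w'$ with $|w'|\le j/(\eta+\ep)$ can contribute for $j$ large (otherwise $x\in\mathcal{A}_{1+\ep}$); combined with Lemma \ref{lem1}, the relevant generations lie in $[j/(\eta+\ep),\,j/(\eta-\ep_j)]$, so $\lfloor\varrho\eta|w'|\rfloor=\lfloor\varrho j\rfloor+O(\ep j)$. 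Using the quasi-Bernoulli and doubling properties of $\mu$ to compare $\mu((I_{w'})^{\varrho\eta})$ to $\mu(I_{\lfloor\varrho j\rfloor}(x))$ (handling both the neighbour shift at generation $\lfloor\varrho j\rfloor$ and the generation gap of $O(\ep j)$), I expect to obtain
\begin{equation*}
\mu(I_{\lfloor\varrho j\rfloor}(x))\cdot 2^{-j\widetilde\ep_j}\le\wMr(I_j(x))\le\mu(I_{\lfloor\varrho j\rfloor}(x))\cdot 2^{j\widetilde\ep_j}
\end{equation*}
for a new sequence $\widetilde\ep_j\to 0$ (letting $\ep\to 0$ last). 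The conclusion then follows from $\lfloor\varrho j\rfloor/j\to\varrho$ together with the fact that the image of $j\mapsto\lfloor\varrho j\rfloor$ is cofinal in $\N$ (since $1/\varrho>1$), which yields
\begin{equation*}
\dimi(\Mr,x)=\liminf_{j\to\infty}\frac{\log\mu(I_{\lfloor\varrho j\rfloor}(x))}{-j}=\varrho\,\dimi(\mu,x).
\end{equation*}

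The hard part will be the book-keeping in step (2): showing uniformly in $j$ that the cube $(I_{w'})^{\varrho\eta}$ can be replaced by $I_{\lfloor\varrho j\rfloor}(x)$ (or an immediate neighbour) with only a sub-exponential penalty. The Gibbs structure of $\mu$ --- quasi-multiplicativity \eqref{quasib} together with the uniform exponents $\alpha_m,\alpha_M$ of \eqref{uniform} --- is precisely what absorbs the discrepancy of $O(\ep j)$ generations into the factor $2^{\pm j\widetilde\ep_j}$, and one must carefully choose the order in which $\ep$ and $j$ go to their limits.
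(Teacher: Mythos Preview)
Your proposal is correct and follows essentially the same route as the paper, which merely says the proof is obtained from those of Lemma~\ref{lem3} and Proposition~\ref{prop2} by replacing $I_w$ by $I_w^\varrho$ in the relevant estimates (in particular in \eqref{adapt1} and \eqref{majmin2}); you have simply written out those adaptations in detail. Your cofinality remark for $j\mapsto\lfloor\varrho j\rfloor$ (to identify the subsequential liminf with the full one) is the one extra observation needed in part (2), and it is justified since $\varrho<1$ forces consecutive values to differ by $0$ or $1$.
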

 
\begin{proof}
The proof is easily adapted from the ones of Lemma \ref{lem3} and Proposition \ref{prop2} by replacing $ I_w $ b y $ I_w^\ro$ everywhere it is necessary to (for instance,  in \eqref{adapt1} and then \eqref{majmin2} in Proposition \ref{prop2}).
\end{proof}
The elements of interest are now  those points in $\zu^d$ which are approximated at rate $\delta$ by dyadic numbers $x_w$ such that $\mu(I_w^{\ro\eta})$ (and not $\mu(I_w)$ as in Section \ref{sec-approx1}) has a specific value. This explains the introduction of the property $ {\mathcal {P}}(\rho, H,\widetilde \xi )$  below.

\begin{definition}
Let $H\geq 0$, $\de\geq 1$ and  $\widetilde \xi=( \xi_j)_{j\geq 1}$ a positive sequence.  The word $w\in \Sigma^*$ is said to satisfy  $\mathcal{P}(\ro,H,\widetilde \xi)$  whenever
  \begin{equation*}
   |I_w^{\varrho\eta}|^{H+\xi_{|w|}}   \leq    \mu( I_w^{\varrho\eta}) \leq  |I_w^{\varrho\eta}| ^{H-\xi_{|w|}}.
   \end{equation*}
Then one sets
 $$ \mathcal{A}_{\ro,H,\widetilde\xi,\delta} = \bigcap_{J \geq 1} \  \bigcup_{j \geq J} \ \bigcup_{w\in \mathcal{S}_{j}(\eta) :  \mbox{ {\small $w$ satisfies $\mathcal{P}(\ro,H,\widetilde \xi )$}   } }  B \Big (x_w, (\ell_w)^{\de}  \Big).$$
\end{definition}

The notations are consistent with those of the previous section: $\mathcal{P}(1,H,\widetilde \xi)=\mathcal{P}(H,\widetilde \xi)$ and $ \mathcal{A}_{1,H,\widetilde\xi,\delta} = \mathcal{A}_{H,\widetilde\xi,\delta} $.
Proposition~\ref{majexpo} extends to the following one.
\begin{proposition}
\label{majexpo'} With probability 1, if   $\widetilde\xi$ is a positive sequence converging to 0 and  $x\in\mathcal{A}_{\ro,H, \widetilde\xi,\delta}$,  then  
$$\ro\eta H \leq \dimi (\Mr,x) \leq \frac{\varrho H}{\min(\delta,1/\eta)}.$$
\end{proposition}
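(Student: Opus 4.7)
The proof parallels that of Proposition \ref{majexpo} (the case $\varrho=1$), with $\mu(I_w^\eta)$ systematically replaced by $\mu(I_w^{\varrho\eta})$ and with the additional factor $\varrho$ carefully tracked in the relevant scales.

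For the left inequality, the plan is to observe that it follows (in fact uniformly in $x\in[0,1]^d$) from the upper bound $\Mr(I_w)\le \mu(I_w^{\varrho\eta})$ provided by Lemma~\ref{lem3bis}(1). Combining this with the uniform Gibbs estimate $\mu(I_w^{\varrho\eta})\le 2^{-\lfloor \varrho\eta |w|\rfloor \alpha_m}$ from \eqref{uniform}, and invoking the doubling property of $\mu$ to handle the maximum over neighbors appearing in $\wMr$ (recall \eqref{defleaderMr}), one gets $\dimi(\Mr,x)\ge \varrho\eta H_{\min}$ for every $x$. Since $\mathcal{A}_{\varrho,H,\widetilde \xi,\delta}$ is only relevant when $H\ge H_{\min}$, this yields in particular the claimed bound.

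For the right inequality, fix $x\in\mathcal{A}_{\varrho,H,\widetilde\xi,\delta}$. By definition there exists an infinite sequence of surviving words $(w_n)_{n\ge 1}$, i.e.\ with $p_{w_n}=1$, satisfying simultaneously
\begin{equation*}
|x-x_{w_n}|\le (\ell_{w_n})^{\delta}=2^{-\delta\lfloor |w_n|(\eta-\ep_{|w_n|})\rfloor}\quad\text{and}\quad \mu(I_{w_n}^{\varrho\eta})\ge 2^{-\lfloor \varrho\eta |w_n|\rfloor(H+\xi_{|w_n|})}.
\end{equation*}
Set $\widetilde\delta=\min(\delta,1/\eta)\in[1,1/\eta]$ and choose the generation $J_n=\lfloor \widetilde\delta \lfloor |w_n|(\eta-\ep_{|w_n|})\rfloor\rfloor$. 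The inequality $\widetilde\delta\le \delta$ forces $|x-x_{w_n}|\le 2^{-J_n}$, so that $I_{J_n}(x_{w_n})$ is a neighbor of $I_{J_n}(x)$; the inequality $\widetilde\delta\le 1/\eta$, together with $\eta-\ep_{|w_n|}<1$, gives $J_n\le |w_n|$, so that $I_{w_n}\subset I_{J_n}(x_{w_n})$. Consequently, $w_n$ appears in the supremum defining $\Mr(I_{J_n}(x_{w_n}))$, which yields
\begin{equation*}
\wMr(I_{J_n}(x))\ge \Mr(I_{J_n}(x_{w_n}))\ge \mu(I_{w_n}^{\varrho\eta})\ge 2^{-\lfloor \varrho\eta |w_n|\rfloor(H+\xi_{|w_n|})}.
\end{equation*}
Taking logarithms, dividing by $-J_n$, and substituting $J_n\sim \widetilde\delta |w_n|(\eta-\ep_{|w_n|})$, the right-hand ratio tends to $\varrho H/\widetilde\delta$ as $n\to\infty$, giving $\dimi(\Mr,x)\le \varrho H/\min(\delta,1/\eta)$.

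The main technical point is the correct calibration of $J_n$, which must simultaneously accommodate the approximation scale $(\ell_{w_n})^{\delta}$ and the geometric containment $I_{w_n}\subset I_{J_n}(x_{w_n})$. The appearance of $\min(\delta,1/\eta)$ in the bound is precisely the signature of which of these two constraints is binding: when $\delta\le 1/\eta$ the approximation rate $\delta$ governs $J_n$, while when $\delta>1/\eta$ the containment constraint caps the effective rate at $1/\eta$ (since otherwise $J_n$ would exceed $|w_n|$).
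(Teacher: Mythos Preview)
Your argument for the upper bound $\dimi(\Mr,x)\le \varrho H/\min(\delta,1/\eta)$ is correct and is exactly the adaptation the paper intends: its own proof merely says to repeat Proposition~\ref{majexpo} with $I_w^{\varrho\eta}$ in place of $I_w^\eta$, and your calibration of $J_n$ together with the estimate along the subsequence reproduce that argument faithfully.

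The lower bound, however, contains a logical slip in its last sentence. From $\Mr(I_w)\le\mu(I_w^{\varrho\eta})\le 2^{-\lfloor\varrho\eta|w|\rfloor H_{\min}}$ you correctly obtain $\dimi(\Mr,x)\ge\varrho\eta H_{\min}$ uniformly in $x$, but the inequality to be proved is $\dimi(\Mr,x)\ge\varrho\eta H$, which is \emph{stronger} whenever $H>H_{\min}$; the remark ``$H\ge H_{\min}$'' points in the wrong direction and does not bridge the gap. In fact the paper's own one-line justification of the left inequality in Proposition~\ref{majexpo} (``follows simply from \eqref{majmin0}'') has the very same imprecision, since \eqref{majmin0} likewise only yields $\eta H_{\min}$. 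This is harmless in practice because the lower bound is only ever invoked in the paper with $H=H_{\min}$ (see the observation just before the treatment of $\underline{E}_{\M_1}(\eta H_{\min})$ in the case $\varrho=1$, and the analogous use of Lemma~\ref{lem3bis}(1) when handling $\sigma_\mu(H')=0$ for $\varrho<1$), where $\varrho\eta H$ and $\varrho\eta H_{\min}$ coincide. So your derivation of $\varrho\eta H_{\min}$ is exactly what the applications require; the stated $\varrho\eta H$ appears to be an overstatement in the paper's formulation rather than something you failed to prove.
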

\begin{proof}
Again, the proof of Proposition \ref{majexpo} can be modified by replacing $I_w$ b y $I_w^\eta$, and using $\mathcal{P}(\ro,H,\widetilde \xi)$ instead of $\mathcal{P}(H,\widetilde \xi)$.
\end{proof}
To compute the multifractal spectrum of $\Mr$, the standard  inequality \eqref{formalism-1} cannot be used, since   the multifractal formalism fails.    We rather use the following properties of the sets $\underline E_{\Mr}(H)$.

\begin{proposition}\label{setestimates}
With probability 1, 
\begin{enumerate}
\item For every $H\ge 0$, writing $\tilde \ep$ for the constant sequence equal to $\ep$, we have 
$$
\underline E_{\Mr}^{\le}(H) \ \subset \  \bigcap_{\varepsilon>0} \  \bigcup_{\substack{H'\ge 0, \,  \delta\in [1,1/\eta], \,  \frac{\varrho H'}{\delta}\le h+\varepsilon}}     {\mathcal{A}}_{\ro, H',\tilde\varepsilon,\de}.
$$ 
\item For every $H\ge 0$, $\underline E_{\Mr}(H)\subset\overline E_\mu^{\ge}(H/\varrho)$.
\end{enumerate}
\end{proposition}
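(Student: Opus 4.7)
For part (2), apply the bound $\Mr(I_w)\ge \mu(I_w^\varrho)\cdot 2^{-|w|\widetilde\ep_{|w|}}$ of Lemma~\ref{lem3bis}(1) to the cube $I_j(x)$ and combine with the trivial $\wMr(I_j(x))\ge \Mr(I_j(x))$ to obtain $\log_2\wMr(I_j(x))/(-j)\le \log_2\mu(I_{\lfloor\varrho j\rfloor}(x))/(-j)+\widetilde\ep_j$. Take the liminf; by the characterization \eqref{equiv-dim} of $\underline\dim(\Mr,x)$ together with the fact that the map $j\mapsto\lfloor\varrho j\rfloor$ hits every sufficiently large integer (its successive increments lie in $\{0,1\}$ since $\varrho\in(0,1)$), this yields $H\le\varrho\,\underline\dim(\mu,x)$. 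Hence $\overline\dim(\mu,x)\ge\underline\dim(\mu,x)\ge H/\varrho$, i.e. $x\in \overline E_\mu^{\ge}(H/\varrho)$. I expect this part to be essentially a direct computation.

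For part (1), fix $x\in \underline E^{\le}_{\Mr}(H)$ and $\varepsilon>0$; the target is a pair $(H',\delta)$ with $\delta\in[1,1/\eta]$, $\varrho H'/\delta\le H+\varepsilon$, and $x\in \mathcal{A}_{\varrho,H',\widetilde\ep,\delta}$, where $\widetilde\ep$ is the constant sequence equal to $\varepsilon$. From $\underline\dim(\Mr,x)\le H$ and \eqref{equiv-dim}, extract $j_n\to\infty$ with $-\log_2\wMr(I_{j_n}(x))/j_n\le H+\varepsilon/3$. Repeating the argument of Lemma~\ref{lem3} (using Lemma~\ref{lem1}), the maximum defining $\wMr(I_{j_n}(x))$ is realised by a surviving word $w_n$ with $j_n\le |w_n|\le j_n/(\eta-\ep_{j_n})$, $I_{w_n}$ contained in a neighbour of $I_{j_n}(x)$ (so $|x-x_{w_n}|\le C\cdot 2^{-j_n}$ for a geometric constant $C$), and value $\mu(I_{w_n}^{\varrho\eta})$. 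Introduce the auxiliary parameters
$$
H_n=\frac{-\log_2\mu(I_{w_n}^{\varrho\eta})}{\lfloor\varrho\eta|w_n|\rfloor},\qquad \delta_n=\frac{j_n-\log_2 C}{\lfloor|w_n|(\eta-\ep_{|w_n|})\rfloor},
$$
so that $H_n\in[\alpha_m,\alpha_M]$ by \eqref{uniform}, $\delta_n\in [1,1/\eta]$ up to $o(1)$ by the range of $|w_n|$, and an explicit computation (using $\lfloor \varrho\eta|w_n|\rfloor\sim \varrho\eta|w_n|$ and $\lfloor|w_n|(\eta-\ep_{|w_n|})\rfloor\sim \eta|w_n|$) gives $\varrho H_n/\delta_n\le H+\varepsilon/3+o(1)$.

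Bolzano--Weierstrass produces a subsequence along which $(H_n,\delta_n)\to(H^*,\delta^*)$ with $\delta^*\in[1,1/\eta]$ and $\varrho H^*/\delta^*\le H+\varepsilon/3$. Choose $\delta=\max(1,\delta^*-\alpha)$ for $\alpha>0$ small enough that $\varrho H^*/\delta\le H+\varepsilon$ still holds. For $n$ large in the subsequence, $\delta_n\ge \delta$ and $|H_n-H^*|<\varepsilon$; since $\ell_{w_n}<1$, the first inequality yields $|x-x_{w_n}|\le \ell_{w_n}^{\delta_n}\le \ell_{w_n}^\delta$, and the second gives $\mu(I_{w_n}^{\varrho\eta})\in [|I_{w_n}^{\varrho\eta}|^{H^*+\varepsilon},|I_{w_n}^{\varrho\eta}|^{H^*-\varepsilon}]$. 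The existence of infinitely many such $w_n$'s places $x$ in $\mathcal{A}_{\varrho,H^*,\widetilde\ep,\delta}$, completing the inclusion. The main obstacle is this final $\varepsilon$-bookkeeping: because $\ell_{w_n}<1$ reverses the monotonicity of the approximation condition in $\delta$, one must pick $\delta$ strictly below $\delta^*$ so that $\delta_n\ge \delta$ holds cofinitely, while simultaneously keeping $\delta\ge 1$ and preserving the strict inequality $\varrho H'/\delta\le H+\varepsilon$ in the limit, and widening the H\"older window from $|H_n-H^*|$ to the fixed tolerance $\varepsilon$; coordinating the initial slack $\varepsilon/3$, the perturbation $\alpha$, and the width $\varepsilon$ of $\mathcal{A}_{\varrho,H^*,\widetilde\ep,\delta}$ is the delicate technical step.
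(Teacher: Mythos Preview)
Your argument for part (1) is essentially the same as the paper's: extract a sequence of generations along which the liminf is (nearly) realised, identify the surviving vertex $w_n$ responsible for $\wMr(I_{j_n}(x))$, define the pair $(H_n,\delta_n)$ from the scaling exponent of $\mu(I_{w_n}^{\varrho\eta})$ and the ratio of generations, pass to a convergent subsequence, and finally shrink $\delta$ slightly to absorb the errors. The paper simply writes $x\in\mathcal{A}_{\varrho,H',\widetilde\ep,\delta-\ep}$ without the explicit $\ep/3$--$\alpha$ bookkeeping you carry out, but the substance is identical.

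For part (2), however, you take a genuinely different route. The paper derives (2) as a corollary of the construction in (1): from the sequence $(w_n)$ and \eqref{minor} one sees, via the doubling property of $\mu$, that $\mu(B(x,r))\asymp \mu(I_{w_n}^{\varrho\eta})$ at the scale $r\sim |I_{w_n}^{\varrho\eta}|$, hence $\overline\dim(\mu,x)\ge H'=H\delta/\varrho\ge H/\varrho$. Your argument instead feeds the deterministic lower bound of Lemma~\ref{lem3bis}(1) directly into the liminf, using the surjectivity of $j\mapsto\lfloor\varrho j\rfloor$ onto large integers (valid since $\varrho\in(0,1)$) and the doubling of $\mu$ to identify $\liminf_j\frac{-\log_2\mu(I_{\lfloor\varrho j\rfloor}(x))}{\lfloor\varrho j\rfloor}$ with $\underline\dim(\mu,x)$. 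This is more elementary and in fact yields the stronger inclusion $\underline E_{\Mr}(H)\subset\underline E_\mu^{\ge}(H/\varrho)$; the paper's route, on the other hand, is free once (1) is in hand and makes the geometric link between the two statements more transparent.
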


The first inclusion will be used to find an upper bound for the multifractal spectrum $\si_{\Mr}$ in the increasing part $H\leq \ro  \tau_
\mu'(0)$, while the second one will be used in the decreasing part $H\geq \ro  \tau_
\mu'(0)$.

\begin{proof}(1) Suppose that $x\in \underline E_{\Mr}(H)$. Recalling \eqref{equiv-dim}, there exists an increasing  sequence $(J_n)_{n\ge 1}$ of positive integers such that 
\begin{equation}
\label{converge1}
\lim_{n\to+\infty}  \frac{\log \wMr(I_{J_n}(x)) }{-J_n\log 2}=H.
\end{equation}
By Lemma~\ref{lem1},   there exists a  sequence of  positive integers $(J'_n)_{n\ge 1}$ satisfying  $J_n\le J'_n$ and $1\leq \limsup_{n\to \infty}J'_n/J_n\le 1/\eta$, such that for each $n$ there exists $w_n \in \mathcal{S}_{J'_n}(\eta)$ and $\wMr(I_{J_n}(x))=\mu(I_{w_n}^{\varrho\eta})$. Let us write $\mu(I_{w_n}^{\varrho\eta})=|I_{w_n}^{\varrho\eta}|^{ H_n}$ for some $H_n \in [\alpha_m,\alpha_M]$ (recall the uniform bound \eqref{uniform} for $\mu$). One has 
$$\frac{\log \wMr(I_{j_n}(x))}{-j_n\log 2} = \frac{\log \mu(I_{w_n}^{\varrho\eta})}{-J_n\log 2}  = \frac{- \lfloor J'_n\ro\eta\rfloor H_n}{-J_n} =\varrho H_n/\delta_n,$$
 with $\delta_n= \frac{ \ro J_n}{ \lfloor J'_n\ro\eta\rfloor } $.  By construction, 
 $$ 1 \leq \liminf _{n\to +\infty} \delta_n \leq \limsup_{n\to+\infty} \delta_n \leq 1/\eta.$$
Up to extraction of a subsequence, one may assume that the two sequences $(H_n)_{n\geq 1}$ and $(\delta_n)_{n\geq 1}$ converge respectively to $H' \in [H_{\min},H_{\max}]$ and $\delta\in [1,1/\eta]$. By \eqref{converge1}, one  has $\varrho H'/\delta=H$. To conclude, we remark that $|x-x_{w_n}|\le 2\cdot 2^{-J_n}  = 2\cdot 2^{-\de_n \lfloor \ro\eta J'_n\rfloor/\ro}$. So, recalling \eqref{deflw},   for every $\epsilon>0$, for every $n$ large enough so that  $|H_n-H'|\leq \ep/2$, $|\delta_n-\delta|\leq \ep/2$ and $|\varrho H_n/\delta-H|\leq \ep/2$:
\begin{itemize}
\item
${w_n}\in \mathcal{S}_{J'_n}(\eta)$, 
\item
$x\in B(x_{w_n}, 2\cdot 2^{-\de_n \lfloor \ro\eta J'_n\rfloor/\ro})\subset  B(x_{w_n},  \ell_{w_n}^{\delta-\epsilon})$ 
 \item
One has 
\begin{equation}
\label{minor}
\mu(I_{w_n}^{\varrho\eta})\in [|I_{w_n}^{\varrho\eta}|^{H'-\epsilon}, |I_{w_n}^{\varrho\eta}|^{H'+\epsilon}].
\end{equation}
 \end{itemize}
 
Finally, $x  $ belongs to the set $  {\mathcal{A}}_{\ro, H',\widetilde\epsilon,\de-\ep}$, hence the result.

\begin{remark}
One has $I_{w_n}\subset  \mathcal{N}(I_{J_n}(x))$. Let us make denote by $j_n$ the largest integer $j$ such that $I_{w_n} \subset  \mathcal{N}(I_{j}(x))$. Clearly $J_n\le j_n\le J'_n$ and $\wMr(I_{j_n}(x)) = \mu(I_{w_n}^{\varrho\eta})$. It follows that $\lim_{n\to\infty}\frac{J_n}{j_n}=1$, for otherwise $\dimi (\M_\ro,x)<H$. 
\end{remark}

\medskip

(2) With the same notations as above, for each $x\in \underline E_{\Mr}(H)$ we   deduce from \eqref{minor}  and the doubling property of $\mu$ that $\overline \dim(\mu,x)\ge H'=H\delta/\varrho\ge H/\varrho$ since $\delta \geq 1$. 
\end{proof}

\subsection{Useful bounds for estimating the Hausdorff dimensions of the sets $  {\mathcal{A}}_{\ro, H,\widetilde\xi,\de}$}

To get upper and lower  bounds for the multifractal spectrum $\si_{\Mr}$, we recall some results obtained in \cite{BSubiquity2}. There,  we studied the Hausdorff dimension of limsup sets of the form
$$\bigcap_{N\geq 1} \  \bigcup_{n\geq N:  \   (r_n)^{\ro H+\xi_{n}}   \leq    \mu( B(t_n, r_n^\ro )) \leq   (r_n)^{\ro H+\xi_{n}}  }  B(t_n, r_n ^\delta),$$
where  $H\geq 0$, $\delta\geq 1$, $(\xi_n)_{n\geq 1}$ is a positive sequence converging to zero, $\mu$ is a Gibbs measure, and where  the sequence of balls  $(B(t_n, r_n))_{n\geq 1}$ satisfies some properties described in the article.
This obviously looks exactly like our present situation with   a proper reformulation: $t_n$ and $r_n$ are replaced by $x_w$ and $\ell_w$, and the scaling condition on $ \mu( B(t_n, r_n^\ro ))$ is replaced by $\mathcal{P}(\rho,H,\widetilde \xi)$ (which does not change the result, as explained in Remark \ref{formequiv}).

In particular, a key property in  \cite{BSubiquity2} is called {\em weak redundancy}: the family of balls $(B(t_n, r_n))_{n\geq 1}$ is weakly redundant when calling $T_N= \{n: 2^{-N}\leq r_n\leq 2^{-N+1}\}$, there exists a sequence $(\alpha_N)_{N\geq1} $ of positive numbers such that
\begin{itemize}
\item
$\lim_{N\to +\infty} \frac{\alpha_N}{N} =0$.
\item
each $T_N$ can be decomposed into $\alpha_N$ families of pairwise disjoint  families $(T_{N,i})_{i=1,..., \alpha_N}$ such that for every $n\neq n'\in T_{N,i}$,  $B(t_n, r_n) \cap B(t_{n'}, r_{n'})=\emptyset$.
\end{itemize}
Weak redundancy of a family of balls implies that the balls of comparable size do not overlap too much (their overlaps are controlled by the sequence $(\alpha_N)_{N\ge 1}$). This is the case for the sequence $(B(x_w,\ell_w))_{w\in \mathcal{S}_j(\eta)}$ by   Lemma~\ref{lem2}. Hence, we can  use  results of \cite{BSubiquity2}, whose statements are recalled  now:

At first, we will need the following slight modification \cite[of Theorem 2.2]{BSubiquity2}, which readily follows from the proof of this theorem and the doubling property of $\mu$. 
\begin{theorem}With probability 1, for every Gibbs capacity $\mu$, every $H\geq 0$, $\delta\geq1$, and $\varepsilon>0$, denoting by $\tilde\varepsilon$ the constant sequence $(\ep)_{n\ge 1}$, 
\begin{equation}
\label{majrho}
\dim\,   {\mathcal{A}}_{\ro, H ,\tilde \ep,\de }\le \min \Big (\frac{d(1-\varrho)+\varrho S_\mu (H ,\varepsilon)}{\delta-\varepsilon}, S_\mu(H ,\varepsilon)\Big ), 
\end{equation}
where $S_\mu(H,\varepsilon)=\sup\{\sigma_\mu(H'): H'\in [H -\varepsilon,H+\varepsilon]\}$. 
\end{theorem}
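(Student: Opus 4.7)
The plan is a covering argument on the limsup set $\mathcal{A}_{\ro,H,\widetilde{\ep},\delta}$ using two complementary covers, each yielding one of the two terms in the minimum: the balls $B(x_w,\ell_w^\delta)$ themselves will give the first bound, and the containing cubes $I_w^{\ro\eta}$ (after invoking the doubling property of $\mu$) will give the second.

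\textbf{First bound.} Fix a generation $j$ and count the admissible centres. The scaling property $\mathcal{P}(\ro,H,\widetilde{\ep})$ depends only on $I_w^{\ro\eta}$, a cube of generation $n=\lfloor j\ro\eta\rfloor$. By the standard multifractal large deviation estimate for the Gibbs capacity $\mu$ (valid since $\tau_\mu$ is a limit and differentiable, so the G\"artner-Ellis theorem yields that the large deviations spectrum equals $\tau_\mu^*=\sigma_\mu$), the number of cubes $V\in\mathcal D_n$ with $\mu(V)\in [|V|^{H+\ep},|V|^{H-\ep}]$ is at most $2^{n(S_\mu(H,\ep)+o(1))}$. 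For each such $V$, Lemma~\ref{lem2} bounds the number of survivors in any generation-$\lfloor j\eta\rfloor$ cube by $j$; since $V$ contains $\asymp 2^{d(\lfloor j\eta\rfloor-n)}$ such subcubes, the number of admissible $w\in\mathcal S_j(\eta)$ with $I_w^{\ro\eta}=V$ is at most $j\cdot 2^{jd\eta(1-\ro)+o(j)}$. Combining, the total count at generation $j$ is bounded by $2^{j\eta(\ro S_\mu(H,\ep)+d(1-\ro))+o(j)}$, while each ball has radius $\ell_w^\delta \asymp 2^{-j\eta\delta}$. Summing (number of balls)$\cdot$(radius)$^s$ over $j\ge J$ gives a convergent series for $s>\frac{d(1-\ro)+\ro S_\mu(H,\ep)}{\delta-\ep}$, hence that dimension bound upon letting $J\to\infty$.

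\textbf{Second bound.} Since $\delta\ge 1>\ro$, for $j$ large enough one has $\ell_w^\delta\le |I_w^{\ro\eta}|$, so $B(x_w,\ell_w^\delta)$ is contained in a uniformly bounded number of cubes at generation $n$ neighboring $I_w^{\ro\eta}$. By the doubling property of the Gibbs capacity $\mu$, these neighbors still satisfy $\mu(\cdot)\asymp|\cdot|^H$ up to an adjustment absorbed in $S_\mu(H,\ep)$. Therefore $\mathcal{A}_{\ro,H,\widetilde{\ep},\delta}$ is contained in the limsup over $n\to\infty$ of those cubes $V\in\mathcal D_n$ satisfying $\mu(V)\in[|V|^{H+\ep},|V|^{H-\ep}]$; the same count $\le 2^{n(S_\mu(H,\ep)+o(1))}$ combined with the diameter $2^{-n}$ yields $\dim\le S_\mu(H,\ep)$ by a direct Hausdorff-measure bound.

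\textbf{Main obstacle.} The argument essentially reproduces the proof of Theorem~2.2 of \cite{BSubiquity2}, and the key technical point — flagged in the excerpt — is the change of scaling condition from $\mu(B(x_w,\ell_w^\ro))\asymp \ell_w^{\ro H}$ in the reference to $\mu(I_w^{\ro\eta})\asymp |I_w^{\ro\eta}|^H$ here. The doubling property of $\mu$ (a consequence of the quasi-Bernoulli relation \eqref{quasib} and the uniform bounds \eqref{uniform}) makes these two conditions equivalent up to multiplicative constants that are absorbed in the $\ep$-fluctuations, and the weak redundancy of $(\mathcal S_j(\eta))_j$ provided by Lemma~\ref{lem2} supplies exactly the control on ball overlaps that the proof in \cite{BSubiquity2} requires. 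With these two ingredients in place, the reference's covering argument adapts verbatim.
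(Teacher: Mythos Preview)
Your proposal is correct and aligns with the paper's own treatment: the paper does not give an independent proof but states the result as a slight modification of \cite[Theorem~2.2]{BSubiquity2} that ``readily follows from the proof of this theorem and the doubling property of~$\mu$,'' and later invokes Lemma~\ref{lem2} (weak redundancy) when applying \eqref{majrho}. Your two-cover argument---counting admissible survivors inside each admissible cube $I_w^{\varrho\eta}$ via Lemma~\ref{lem2} for the first bound, and covering by the cubes $I_w^{\varrho\eta}$ themselves for the second---is precisely the mechanism behind that reference, and your ``Main obstacle'' paragraph correctly isolates the only nontrivial adaptation (passing from the scaling condition on $\mu(B(x_w,\ell_w^\varrho))$ to the one on $\mu(I_w^{\varrho\eta})$ via the doubling property).
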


Regarding the lower bound, since Lemma~\ref{lem1}  implies that with probability 1, not only \eqref{cover1} holds, but for all $j\ge 1$ large enough, for all $w\in \mathcal S_j(\eta)$, there exists $\widetilde S_{j,\varrho}(w)\subset \mathcal S_j(\eta)$ of cardinality at least $5^{-1}\cdot 2^{j(1-\varrho -\varepsilon_j)}$,  such that the balls $B(x_{w'}, \ell_{w'})$, $w'\in \widetilde S_{j,\varrho}(w)$, are included in $B(x_w,\ell^\varrho)$ and are pairwise disjoint,  
the following result holds (recall the formula \eqref{defmrho} for $m_\ro$):
\begin{theorem}[Corollary of Theorem  2.7 of \cite{BSubiquity2}] \label{ubirho}
Let $\mu$ be a Gibbs capacity.  Let $\varrho\in (0,1)$. 

With probability 1, for all $\de>1$, and $H\geq 0$ such that $\sigma_\mu(H)>0$,  there exists a sequence $\widetilde \xi$ decreasing to zero and a Borel probability measure $\nu_{\ro,H,\delta}$ such that:
\begin{itemize}
\item
$\nu_{\ro,H,\delta}( { \mathcal{A}}_{\varrho, H,\widetilde\xi,\de}) >0$,
\sk
\item
for any set $E$ of Hausdorff dimension less than $m_\ro(H,\delta) $,  $\nu_{\ro,H,\delta}(E)=0$.
\end{itemize}
In particular, $\dim {\mathcal{A}}_{\varrho, H,\widetilde\xi,\de}\ge m_\ro(H,\delta)$. 
\end{theorem}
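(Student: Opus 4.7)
The plan is to carry out a heterogeneous ubiquity argument in the spirit of \cite{BSubiquity1,BSubiquity2}: I would build a generalized Cantor set $K\subset \mathcal{A}_{\varrho,H,\widetilde\xi,\delta}$ and a probability measure $\nu_{\varrho,H,\delta}$ supported on $K$ whose lower Hausdorff dimension at $\nu$-a.e.~point is at least $m_\varrho(H,\delta)$; the mass distribution principle then yields both conclusions at once. The construction combines two ingredients: the covering property \eqref{cover1}, which guarantees that every point of $[0,1]^d$ is approximated infinitely often by $x_w$ with $w\in \mathcal{S}_j(\eta)$ at rate $1/\eta$; and the multifractal structure of $\mu$, which via the exact-dimensional measure $\mu_H$ of dimension $\sigma_\mu(H)$ provided by Proposition~\ref{proplast} allows to focus on those $w$ satisfying $\mathcal{P}(\varrho,H,\widetilde\xi)$.

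Concretely, I would fix a very fast growing sequence $(j_n)_{n\ge 1}$ and, inductively, select inside each previously chosen ``parent'' ball $B(x_{w_{n-1}},\ell_{w_{n-1}}^\delta)$ a family $\mathcal{G}_n$ of surviving vertices $w\in \mathcal{S}_{j_n}(\eta)$ satisfying $\mathcal{P}(\varrho,H,\widetilde\xi)$ and whose approximating balls $B(x_w,\ell_w^\delta)$ are pairwise disjoint. Existence of such $\mathcal{G}_n$ with cardinality $\gtrsim 2^{j_n\eta(d(1-\varrho)+\varrho\sigma_\mu(H))-o(j_n)}$ follows from a Borel--Cantelli / concentration estimate exploiting: (a) the Bernoulli nature of the $p_w$'s, so that a typical cube $I_W$ of generation $\lfloor \varrho\eta j_n\rfloor$ contains about $2^{j_n\eta(1-\varrho)d}$ surviving subcubes; (b) the quasi-Bernoulli property \eqref{quasib} of $\mu$, implying that inside the parent cube a proportion $\gtrsim 2^{\lfloor\varrho\eta j_n\rfloor \sigma_\mu(H)-o(j_n)}$ of the cubes $I_w^{\varrho\eta}$ satisfy $\mathcal{P}(\varrho,H,\widetilde\xi)$ with $\widetilde\xi$ slowly decreasing; (c) the weak redundancy encoded in Lemma~\ref{lem2}, which keeps overlaps between the $B(x_w,\ell_w^\delta)$ under control while enforcing disjointness. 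Setting $K=\bigcap_n\bigcup_{w\in\mathcal{G}_n}B(x_w,\ell_w^\delta)$ produces a subset of $\mathcal{A}_{\varrho,H,\widetilde\xi,\delta}$ by construction.

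The measure $\nu_{\varrho,H,\delta}$ is then defined inductively by splitting the mass of each parent ball among its children according to the rule
\[
\nu(B(x_w,\ell_w^\delta))\;=\;\nu(B(x_{w_{n-1}},\ell_{w_{n-1}}^\delta))\cdot \frac{\mu_H(I_w^{\varrho\eta})^{\alpha}}{\sum_{w'\in \mathcal{G}_n(w_{n-1})}\mu_H(I_{w'}^{\varrho\eta})^{\alpha}},
\]
where the free exponent $\alpha\in[0,1]$ interpolates between a uniform distribution ($\alpha=0$, giving contribution $(d(1-\varrho)+\varrho\sigma_\mu(H))/\delta$ to the local dimension) and a $\mu_H$-proportional one ($\alpha=1$, giving contribution $\sigma_\mu(H)$). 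The optimal choice of $\alpha$ produces the local dimension $m_\varrho(H,\delta)=\min\{(d(1-\varrho)+\varrho\sigma_\mu(H))/\delta,\,\sigma_\mu(H)\}$ at every point of~$K$. To transfer this to an arbitrary ball $B(x,r)$ one identifies the unique stage $n$ such that $\ell_{w_n(x)}^\delta\sim r$ and controls $\nu(B(x,r))$ by a geometric sum using both the construction at stage $n+1$ (for very small $r$) and the Frostman behavior of $\mu_H$ (for larger $r$); the doubling property of $\mu$ and of $\mu_H$ makes these two regimes match.

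The hard part, as always in heterogeneous ubiquity, is not the idea but the quantitative matching: one must choose the sequence $\widetilde\xi$ sufficiently slowly (so that $\mathcal{P}(\varrho,H,\widetilde\xi)$ is typical enough at every scale) while still forcing $\widetilde\xi_{j_n}\to 0$, and one must ensure that the cancellations in the Frostman estimate for $\nu$ do not lose more than a sub-exponential factor $2^{o(j_n)}$ at each stage---this is where the strict concavity and analyticity of $\tau_\mu$, the quasi-Bernoulli property, and the sharpness of Lemma~\ref{lem2} are all used simultaneously. Assuming this sharp bookkeeping goes through, the conclusion $\dim \mathcal{A}_{\varrho,H,\widetilde\xi,\delta}\ge m_\varrho(H,\delta)$ follows from the mass distribution principle applied to $\nu_{\varrho,H,\delta}$.
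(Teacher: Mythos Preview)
The paper does not prove this statement: it is quoted verbatim as Theorem~2.7 of \cite{BSubiquity2} and used as a black box (only a remark on the boundary cases $H\in\{H_{\min},H_{\max}\}$ is added). So there is no in-paper proof to compare your proposal against.

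That said, your sketch does follow the architecture of the heterogeneous ubiquity argument in \cite{BSubiquity2}: a nested Cantor construction along a fast-growing sequence $(j_n)$, selecting at each stage surviving vertices $w\in\mathcal S_{j_n}(\eta)$ that satisfy $\mathcal P(\varrho,H,\widetilde\xi)$, and a measure built with the help of the auxiliary exact-dimensional measure $\mu_H$ of dimension $\sigma_\mu(H)$, concluding via the mass distribution principle. The cardinality estimate you give for the children inside a parent ball is correct at leading order.

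One point is off, however. The formula $m_\varrho(H,\delta)=\min\{(d(1-\varrho)+\varrho\sigma_\mu(H))/\delta,\,\sigma_\mu(H)\}$ does \emph{not} come from optimising an interpolation parameter $\alpha\in[0,1]$ in the mass splitting. All the selected $w$ at a given stage satisfy $\mathcal P(\varrho,H,\widetilde\xi)$, so the weights $\mu_H(I_w^{\varrho\eta})^\alpha$ are all comparable up to $2^{o(j_n)}$ regardless of~$\alpha$; your interpolation therefore collapses to a single rule. The minimum arises instead from the fact that the constructed measure has \emph{two distinct scaling regimes}: at the coarse scale $2^{-\lfloor\varrho\eta j_n\rfloor}$ (the good cubes $I_w^{\varrho\eta}$, weighted via $\mu_H$) the local exponent is $\sigma_\mu(H)$, while at the fine scale $\ell_{w_n}^\delta=2^{-\delta\eta j_n}$ (the contracted balls around the survivors inside each good cube, weighted uniformly) the exponent is $(d(1-\varrho)+\varrho\sigma_\mu(H))/\delta$. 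The lower local dimension is the $\liminf$ over scales and hence picks out the smaller of the two. This two-scale mechanism, not an $\alpha$-optimisation, is the actual content of the bound in \cite{BSubiquity2}.
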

\begin{remark}
Like for the case $\ro=1$, Theorem  2.7 of \cite{BSubiquity2} deals with $H\in(H_{\min},H_{\max})$, but a simple adaptation of the proof makes it possible to extend the result to $H\in \{H_{\min},H_{\max}\}$.
\end{remark}

\subsection{Upper bound for $\sigma_{\M_\ro}$}
\label{upper2}

To check the desired upper bound $\sigma_{\Mr}\le D_{\mu,\ro,\eta}$,  we distinguish the cases $H\le \varrho \tau_\mu'(0)$ and $H\ge \varrho \tau_\mu'(0)$. The first case  is a direct consequence of Proposition~\ref{pro5.1}, \eqref{majrho}, Proposition~\ref{setestimates}(1), and the following Lemma. 
\begin{lemma}
\label{propmaj3}For $\ep>0$, denote by $\tilde\ep$ the constant sequence equal to $\ep$.  
With probability 1, 
 for all $H\ge 0$, 
$$\dim  \Big( \bigcap_{\varepsilon>0}  \ \bigcup_{ {H' \ge 0,\,  \delta\in [1,1/\eta],\, \frac{\varrho H' }{\delta}\le H+\varepsilon}}     {\mathcal{A}}_{\ro, H'  ,\tilde \varepsilon,\de} \Big)\le \widetilde D_{\mu,\ro,\eta}(H).$$
\end{lemma}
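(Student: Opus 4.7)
The plan is to bound $\dim A(H)$, where $A(H)$ denotes the set on the left-hand side, by discretizing the two-parameter family $(H',\delta)$ over which the union is taken and applying \eqref{majrho} on each resulting piece. The effective parameter space is compact: by the uniform bounds \eqref{uniform} on $\mu$, the property $\mathcal{P}(\ro,H',\tilde\ep)$ forces $H'\in[\alpha_m-\ep,\alpha_M+\ep]$, and $\delta\in[1,1/\eta]$ by definition, so it suffices to work over $K=[\alpha_m,\alpha_M]\times[1,1/\eta]$.

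Fix $\ep>0$ and pick a finite $\ep$-net $\{H'_i\}_{i\le N_\ep}$ of $[\alpha_m,\alpha_M]$. A direct comparison of exponents of $|I_w^{\ro\eta}|\le 1$ shows that $\mathcal{P}(\ro,H',\tilde\ep)$ implies $\mathcal{P}(\ro,H'_i,2\tilde\ep)$ whenever $|H'-H'_i|\le\ep$, so $\mathcal{A}_{\ro,H',\tilde\ep,\delta}\subset\mathcal{A}_{\ro,H'_i,2\tilde\ep,\delta}$ for every $\delta$. Since $\delta\mapsto \mathcal{A}_{\ro,H'_i,2\tilde\ep,\delta}$ is non-increasing (smaller $\delta$ means larger balls), taking the union over those $\delta\in[1,1/\eta]$ satisfying $\varrho H'/\delta\le H+\ep$ yields
$$
\bigcup_{\delta} \mathcal{A}_{\ro,H',\tilde\ep,\delta}\ \subset\ \mathcal{A}_{\ro,H'_i,2\tilde\ep,\delta_i^\ep},
\qquad
\delta_i^\ep:=\max\bigl(1,\varrho(H'_i-\ep)/(H+\ep)\bigr),
$$
so, summing over $i$,
$$
\bigcup_{(H',\delta)\colon\varrho H'/\delta\le H+\ep}\mathcal{A}_{\ro,H',\tilde\ep,\delta}\ \subset\ \bigcup_{i=1}^{N_\ep}\mathcal{A}_{\ro,H'_i,2\tilde\ep,\delta_i^\ep}.
$$

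Next, apply \eqref{majrho} to each piece (the boundary case $\delta_i^\ep\in[1,1+2\ep]$ being handled by replacing $\delta_i^\ep$ with $1+2\ep$, which only costs $o(1)$ in the bound):
$$
\dim \mathcal{A}_{\ro,H'_i,2\tilde\ep,\delta_i^\ep}\ \le\ \min\!\Bigl(\tfrac{d(1-\varrho)+\varrho S_\mu(H'_i,2\ep)}{\delta_i^\ep-2\ep},\ S_\mu(H'_i,2\ep)\Bigr)\ \le\ m_\varrho(H'_i,\delta_i^\ep)+r(\ep),
$$
where $r(\ep)\to 0$ uniformly in $i$ by the uniform continuity of $\sigma_\mu$ on $[\alpha_m,\alpha_M]$. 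Since a finite union has dimension equal to the maximum of the individual dimensions, and since $m_\varrho(H',\cdot)$ is non-increasing (so the optimum $\delta$ subject to $\varrho H'/\delta\le H$ is $\max(1,\varrho H'/H)$, matching $\delta_i^\ep$ in the limit $\ep\to 0$), we obtain
$$
\dim \bigcup_{i}\mathcal{A}_{\ro,H'_i,2\tilde\ep,\delta_i^\ep}\ \le\ \max_i m_\varrho(H'_i,\delta_i^\ep)+r(\ep)\ \le\ \widetilde D_{\mu,\varrho,\eta}(H+C\ep)+r(\ep)
$$
for some constant $C=C(\varrho,\eta)$.

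Intersecting over $\ep>0$ taken along rationals and using the continuity of $\widetilde D_{\mu,\varrho,\eta}=D_{\mu,\varrho,\eta}$ (Propositions~\ref{pro5.1} and~\ref{regularityDetarho}) then yields $\dim A(H)\le \widetilde D_{\mu,\varrho,\eta}(H)$. The uniform-in-$H$ almost sure statement follows from the fact that the whole discretization uses only countably many parameters: we work on a single full-measure event on which \eqref{majrho} holds for every $(H'_i,2\tilde\ep,\delta_i^\ep)$ with $\ep\in\mathbb{Q}_{>0}$ and $i\le N_\ep$. The main technical step is the inclusion $\mathcal{A}_{\ro,H',\tilde\ep,\delta}\subset\mathcal{A}_{\ro,H'_i,2\tilde\ep,\delta_i^\ep}$, which authorizes replacing the uncountable union by a finite one with only a controlled parameter shift; once it is in place, the rest of the argument is a compactness plus continuity routine.
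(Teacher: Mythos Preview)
Your proof is correct and follows essentially the same route as the paper: discretize the parameter space $(H',\delta)$, apply the bound \eqref{majrho} on each piece, take the maximum over the finite family, and let $\ep\to 0$ using the continuity of $\sigma_\mu$. Two small remarks: (i) the phrase ``uniform continuity of $\sigma_\mu$ on $[\alpha_m,\alpha_M]$'' should read ``on its domain $[H_{\min},H_{\max}]$'', since $\sigma_\mu=-\infty$ outside that interval; and (ii) your countability concern for the almost-sure statement is unnecessary, because the weak redundancy property (Lemma~\ref{lem2}) holds on a single full-measure event, after which \eqref{majrho} is deterministic for \emph{all} parameters simultaneously.
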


\begin{proof} Fix $\ep\in (0,1)$. For every integer $N\ge 1/\varepsilon$, set $P_N=\{k/N:  k\in\N\}$. We have 
$$
\bigcup_{\substack{H' \ge 0, \, \delta\in [1,1/\eta],\,  \frac{\varrho H' }{\delta}\le H+\varepsilon}}     {\mathcal{A}}_{H' ,\varrho,\tilde\ep,\delta} =  \bigcup_{\substack{ H' \in P_N , \, \delta\in [1,1/\eta]\cap P_N, \,  \frac{\varrho H' }{\delta}\le H+\varepsilon}}     {\mathcal{A}}_{H' ,\varrho,\tilde\varepsilon,\delta}.
$$

By Lemma~\ref{lem2} the family of balls $(B(x_w,\ell_w))_{w\in\bigcup_{j\ge 1}\mathcal{S}_j(\eta)}$ satisfies  the weakly redundancy property \cite[Definition 2.1]{BSubiquity2}. Thus  by \eqref{majrho},
\begin{align*}
\dim \bigcap_{\ep>0}\bigcup_{\substack{H' \ge 0, \delta\in [1,1/\eta]\\ \frac{\varrho H' }{\delta}\le H+\ep}}     {\mathcal{A}}_{H' ,\varrho,\delta,\ep}&\le \inf_{\ep>0} \liminf_{N\ge 1/\ep} \sup_ {\substack{ H' \in P_N, \delta\in [1,1/\eta]\cap P_N\\ \frac{\varrho H' }{\delta}\le H+\ep}} \min \Big (\frac{d(1-\varrho)+\varrho S_\mu(H' ,\ep)}{\delta-\ep}, S_\mu(H' ,\ep)\Big )\\
&=\widetilde D_{\mu,\ro,\eta}(H),
\end{align*}
the conclusion coming from \eqref{defdtilde} and the continuity of the upper-semicontinuous concave function $\sigma_\mu$ over its compact domain $[H_{\min},H_{\max}]$. 
\end{proof}

For the second case $H\geq \ro\tau_\mu'(0)$, combining item (2) of  Proposition~\ref{setestimates} and \eqref{upcap2} applied to $\mu$ yields
$$\dim \underline E_{\Mr}(H) \leq \dim \overline{E}^{\ge} (H/\ro )\le \si_\mu(H/\ro)=  D_{\mu,\ro,\eta}(H).$$
This completes the proof.

\subsection{{Lower bound for the spectrum}}

Here one proves that $\si_{\Mr}\geq D_{\mu,\ro,\eta}$. The main idea is to find, for each $H$, with well-chosen parameters $H'$ and $\delta$, either a suitable measure of the form $\mu_{H'}$ as in Proposition~\ref{proplast}, or a measure $\nu_{\ro,H',\delta}$ from Theorem \ref{ubirho} carried by the set  $\underline E_{\Mr}(H)$.

\subsubsection{{\bf Decreasing part of the spectrum:  $H\ge \varrho\tau_\mu'(0)$}} 
If $H_{\min}\le H\le \ro H_{\max}$, by item (2) of  Lemma~\ref{lem3bis}, one knows that $ \{x\in[0,1]^d:\, \dim(\mu,x)=H/\ro \} \bigcap\{x\in[0,1]^d:\,\delta_x=1\}  \subset      E_{\Mr}(H) \subset   \underline  E_{\Mr}(H)$. This implies that the measure $\mu_{H/\ro}$  considered in Proposition~\ref{proplast} is supported on  $\underline E_{\Mr}(H))$, hence  $\dim \underline E_{\Mr}(H) \geq \si_\mu(H/\ro)$. If $H> \ro H_{\max}$, then $\sigma_{M_\ro}(H)\ge -\infty=D_{\mu,\ro,\eta}(H)$. 
 
\subsubsection{{\bf Increasing part of the spectrum when $\sigma_\mu(H_{\min})>  \frac{d(1-\varrho)}{1/\eta-\varrho}$ }}$\ $

\medskip

The three phases exhibited by formula~\ref{spec1} correspond to various choices for $H'$ and $\delta$ to get from Theorem~\ref{ubirho} a good measure $\nu_{\ro,H',\delta}$ 
supported on $\underline{E}_{\M_\ro}(H')$.

\mk

\noindent
{\it $\bullet$ $H\le  \varrho\eta H_\ro $.} Let $H'=H/\varrho\eta$ and  $\delta=1/\eta \ge \delta_\ro(H')$. 

Note that  $\sigma_\mu(H')> 0$ or $\sigma_\mu(H')=-\infty$. If $\sigma_\mu(H')> 0$, consider the measure  $\nu_{\ro,H',\delta}$ and the set ${\mathcal{A}}_{\varrho,H',\de,\widetilde\xi}$  from Theorem \ref{ubirho} (for an appropriate  sequence $\widetilde \xi$). By Proposition~\ref{majexpo'},  
\begin{equation}
\label{embed1}
 {\mathcal{A}}_{\varrho,H',\de,\widetilde\xi}\subset  \underline{E}_{\M_\rho}^\le(H).
 \end{equation}
Moreover, by construction, for all $n\ge 1$, we know using the upper bound for $\si_{\Mr}$ obtained in Section \ref{upper2} that   
$$\dim \underline{E}_{\M_\ro}^{\le}(H-1/n) \le \widetilde D_{\mu,\ro,\eta}(H-1/n)<D_{\mu,\ro,\eta}(H)=\frac{d(1-\varrho)+\varrho\sigma_\mu(H/\ro\eta ))}{\delta}=m_\ro(H,\delta),$$
the last inequality coming  from the formula \eqref{spec1} for $D_{\mu,\ro,\eta}$. Consequently, one has $\nu_{\ro,H',\delta}({\mathcal{A}}_{\varrho,H',\de,\widetilde\xi})=1$ and   $\nu_{\ro,H',\delta}\big(\underline{E}_{\M_\ro}^{\le}(H-1/n)\big) =0$ for every $n\geq 1$, hence  $\nu_{\ro,H',\delta}$  is supported  on the set
 $$ F= {\mathcal{A}}_{\varrho,H',\de,\widetilde\xi}\setminus \bigcup_{n\geq 1 }\underline{E}_{\M_\ro}^{\le}(H-1/n)  \subset   \underline E_{\Mr}(H).$$
This shows that   $\nu_{\ro,H',\delta}(\underline E_{\Mr}(H) ) = \nu_{\ro,H',\delta}( F) =1$, hence $\dim \underline E_{\Mr}(H) \ge   D_{\mu,\ro,\eta}(H)$. 

If $\sigma_\mu(H')=-\infty$, then  $\widetilde D_{\mu,\ro,\eta}(H)=D_{\mu,\ro,\eta}(H)=-\infty$, so $\underline E_{\Mr}(H)=\emptyset$ by ~\ref{setestimates}(1).

\medskip

\noindent
{\it $\bullet$ $ \varrho\eta H_\ro  \le H \le  \theta_\ro(H_\ro) $.} Here we set $H'=H_\ro $ and  $\delta=\delta_\ro (H_\varrho) \theta_\ro(H_\ro)  /H  = \ro H_\ro/H $ and fix $\widetilde \xi$ like in Theorem~\ref{ubirho}. 
Since $\frac{\ro H'}{\delta} =  \frac{\ro H_\ro}{\ro H_\ro/H}  = H$,  by Proposition~\ref{majexpo'},  
\eqref{embed1} still holds true, and one can imply the same arguments  as when $H\le  \varrho\eta H_\ro $. The lower bound for $\dim \underline E_{\Mr}(H) $ is then 
$$  m_\ro(H',\de) = \frac{d(1-\varrho)+\varrho\sigma_\mu(H') }{\delta}  = \frac{d(1-\varrho)+\varrho\sigma_\mu(H_\ro ) }{ \delta_\ro (H_\varrho) H_\ro /H}  = \frac{\si_\mu(H_\ro)}{\theta_\ro(H_\ro) } H $$
hence the desired value.

\medskip

\noindent
{\it $\bullet$ $ \theta_\ro(H_\ro)  \le H \le \varrho \tau_\mu'(0)$.} Here we set $H'=\theta_\ro ^{-1}(H)$, $\delta= \de_\ro(\theta_\ro ^{-1}(H))$ and fix $\widetilde \xi$ like in Theorem~\ref{ubirho}. The same argument as $H\le  \varrho\eta H_\ro $  yields the   lower bound
$$  m_\ro(H',\de) = \frac{d(1-\varrho)+\varrho\sigma_\mu(H') }{\delta}  = \frac{d(1-\varrho)+\varrho\sigma_\mu(\theta_\ro ^{-1}(H) ))}{\de_\ro(\theta_\ro ^{-1}(H)) }  =  \si_\mu(\theta_\ro^{-1}(H)) , $$
hence the expected value.

\subsubsection{{\bf Increasing part of the spectrum when $\sigma_\mu(H_{\min})>  \frac{d(1-\varrho)}{1/\eta-\varrho}$ and $H_{\ro,\eta}\le H_{\varrho}$}}$\ $

We only need to deal with the case $H\le \varrho\eta H_{\ro,\eta}$, since the other cases can be treated as in the previous subsection. 

\medskip

If $H\le \varrho\eta H_{\ro,\eta}$ and $\sigma_\mu(H/\varrho\eta)\ge 0$, set $H'= H/\varrho\eta$ and $\delta=1/\eta \le \delta_\ro (H')$. If $\sigma_\mu(H')\ge 0$ and $\sigma_\mu(H')=0$, then $H'=H_{\min}$. In this case, note that by item (1) of Lemma~\ref{lem3bis} one has $\underline\dim(\M_\ro,x)\ge \ro\eta H_{\min}$ for all $x\in[0,1]^d$. Moreover,  it is not difficult to show that there exists a sequence $\widetilde\xi$ such that $ {\mathcal{A}}_{\varrho,H_{\min},1/\eta,\widetilde\xi}\neq\emptyset$. Since by Proposition~\ref{majexpo'}, ${\mathcal{A}}_{\varrho,H_{\min},1/\eta,\widetilde\xi}\subset \underline{E}_{\M_\rho}^\le(\ro\eta H_{\min})$, this yields $\underline{E}_{\M_\ro}(\ro\eta H_{\min})\neq\emptyset$ and $\sigma_{\M_\ro}(\ro\eta H_{\min})\ge 0=\sigma_\mu(H_{\min})=D_{\mu,\ro,\eta}(\ro\eta H_{\min})$. 

If $\sigma_\mu(H')>0$, the discussion is the same as in the previous  section except that here   $m_\ro(H',\delta)=\sigma_\mu(H')$ instead of $m_\ro(H',\delta) =(1-\varrho+\varrho\sigma_\mu(H'))/\delta$. 

If $\sigma_\mu(H')=-\infty$, then, as in the previous subsection, $\widetilde D_{\mu,\ro,\eta}(H)=D_{\mu,\ro,\eta}(H)=-\infty$, so $\underline E_{\Mr}(H)=\emptyset$ by Proposition~\ref{setestimates}(1).

\subsubsection{{\bf Increasing part of the spectrum when  $\sigma_\mu(H_{\min})>  \frac{d(1-\varrho)}{1/\eta-\varrho}$ and $H_{\ro,\eta}> H_{\varrho}$}}$\ $

\medskip

\noindent
{\it $\bullet$ $ H\le  \varrho\eta H_{\ro,\eta}$.} The discussion is the same as in the previous subsection.

\medskip

\noindent
{\it $\bullet$ $ \varrho\eta H_{\ro,\eta}\le H\le \varrho\tau_\mu'(0)$.} The discussion is identical as in the previous subsection when $H\ge H_\ro$, since we know from Section~\ref{sec5-1} that  $\varrho\eta H_{\ro,\eta}\ge H_\ro$.

\subsection{About the  multifractal formalism for $\Mr$}

In this section, we compute $\tau_{\Mr}$ and study the (non)-validity of the multifractal formalism for $\Mr$. First we restate item (2) of Theorem \ref{main2}.

\begin{proposition}\label{tauMrho}
With probability 1, one has  
$$
\tau_{\M_{\ro}} (q) =
\begin{cases}
d(\varrho-1)+\varrho\tau_\mu(q)&\text{if }q<{q_\ro},\\
\eta(d(\varrho-1)+\varrho\tau_\mu(q))&\text{if }q\ge {q_\ro}.
\end{cases}
$$
\end{proposition}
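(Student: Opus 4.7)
The claim is that $\tau_{\Mr}(q)=T_\ro(q)$ almost surely for all $q\in\R$, where $T_\ro$ denotes the right-hand side in the statement, i.e. $T_\ro(q)=d(\ro-1)+\ro\tau_\mu(q)$ for $q<{q_\ro}$ and $T_\ro(q)=\eta(d(\ro-1)+\ro\tau_\mu(q))$ for $q\geq {q_\ro}$. The goal is to establish the two matching inequalities $\tau_{\Mr}\geq T_\ro$ and $\tau_{\Mr}\leq T_\ro$. The key tools used throughout are the two-sided estimate $\mu(I_w^\ro)2^{-|w|\wep_{|w|}} \leq \Mr(I_w) \leq \mu(I_w^{\reta})$ of Lemma~\ref{lem3bis}(1), the quasi-multiplicativity~\eqref{quasib} and the uniform bounds~\eqref{uniform} for the Gibbs capacity $\mu$, the combinatorial controls of Lemmas~\ref{lem1}--\ref{lem2}, and the convergence $\tau_\mu=\lim_{j\to\infty}\tau_{\mu,j}$ from~\eqref{lim-tau}.

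For the lower bound $\tau_{\Mr}\geq T_\ro$, I would follow the strategy of Proposition~\ref{propupper}. When $q\leq 0$, using $\wMr(I_w)\geq \Mr(I_w)\geq \mu(I_w^\ro)2^{-|w|\wep_{|w|}}$ together with the identity $\sum_{|w|=j}\mu(I_w^\ro)^q = 2^{d(j-\lfloor \ro j\rfloor)}\sum_{|v|=\lfloor\ro j\rfloor}\mu(I_v)^q$ yields the deterministic bound $\tau_{\Mr,j}(q)\geq d(\ro-1)+\ro\tau_\mu(q)-o(1)$. When $q>0$, each $\wMr(I_w)$ is realized by a surviving vertex $w''$ of generation $j'\in[j,\lfloor j/(\eta-\ep_j)\rfloor]$, and taking expectations together with the grouping of $2^{d(j'-\lfloor\reta j'\rfloor)}$ subcubes above each cube of generation $\lfloor\reta j'\rfloor$ gives
\begin{equation*}
\mathbb E\Big[\sum_{|w|=j}\wMr(I_w)^q\Big]\lesssim \sum_{j'=j}^{\lfloor j/(\eta-\ep_j)\rfloor} 2^{-j'\eta(d(\ro-1)+\ro\tau_\mu(q))},
\end{equation*}
which is dominated by its first term when $q>{q_\ro}$ (giving exponent $\eta(d(\ro-1)+\ro\tau_\mu(q))$) and by its last term when $q<{q_\ro}$ (giving exponent $d(\ro-1)+\ro\tau_\mu(q)$). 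Markov plus Borel--Cantelli over a dense countable set of $q$'s, combined with the concavity of $\tau_{\Mr}$, close the lower bound.

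For the upper bound $\tau_{\Mr}\leq T_\ro$, I would split into three cases. When $0\leq q\leq {q_\ro}$, the deterministic lower bound on $\Mr$ already forces $\sum_{|w|=j}\wMr(I_w)^q \geq 2^{-qj\wep_j}\sum_{|w|=j}\mu(I_w^\ro)^q\gtrsim 2^{-j(d(\ro-1)+\ro\tau_\mu(q))-o(j)}$, hence $\tau_{\Mr}(q)\leq T_\ro(q)$. When $q\geq {q_\ro}$, I would restrict to surviving cubes, for which $\Mr(I_w)=\mu(I_w^{\reta})$: the variable $Y_j:=\sum_{|w|=j,\, p_w=1}\mu(I_w^{\reta})^q$ has $\mathbb E[Y_j]\sim 2^{-j\eta(d(\ro-1)+\ro\tau_\mu(q))}$, and a concentration argument based on the independence of the $(p_w)$ together with Borel--Cantelli along a geometric subsequence forces $Y_j\geq \tfrac{1}{2}\mathbb E[Y_j]$ infinitely often almost surely. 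For the delicate case $q<0$, the naive bound $\Mr(I_w)\leq C\mu(I_w^{\reta})$ only yields $\tau_{\Mr}(q)\leq \eta(d(\ro-1)+\ro\tau_\mu(q))$, strictly weaker than $T_\ro(q)$; the sharper estimate follows by showing that, for any $\ep>0$, all but a vanishing proportion of cubes $I_w$ satisfy $\Mr(I_w)\leq C\mu(I_w^\ro)2^{\ro\ep j\alpha_M}$. To this end, I introduce the smallest generation $j^*(I_w)$ of a surviving descendant of $I_w$: a Markov bound on the expected number of survivors in $I_w$ at generations below $(1-\ep)j/\eta$ gives $\mathbb P(j^*(I_w)<(1-\ep)j/\eta)\lesssim 2^{-dj\ep}$, and because this event is independent of $\mu(I_w^\ro)$, $\mathbb E[\sum_{w\text{ bad}}\mu(I_w^\ro)^q]\lesssim 2^{-dj\ep}\sum_{|w|=j}\mu(I_w^\ro)^q$. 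Borel--Cantelli then yields $\sum_{w\text{ good}}\mu(I_w^\ro)^q=(1-o(1))\sum_{|w|=j}\mu(I_w^\ro)^q$, while the quasi-multiplicativity of $\mu$ gives $\Mr(I_w)\leq C\mu(I_w^\ro)2^{\ro\ep j\alpha_M}$ for good $w$; letting $\ep\to 0^+$ after $j\to\infty$ concludes.

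The main obstacle lies in this last case $q<0$: the deterministic two-sided bounds on $\Mr$ leave a factor-$\eta$ gap between the two natural exponents, and closing it requires the combined probabilistic ingredient (concentration of $j^*(I_w)$ near $j/\eta$ together with the independence of the bad event from $\mu(I_w^\ro)$) and the analytic ingredient (quasi-multiplicativity of $\mu$) sketched above.
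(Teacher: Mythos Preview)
Your lower bound $\tau_{\Mr}\ge T_\ro$ is essentially identical to the paper's argument. The upper bound is correct as well, but your route differs from the paper's in two places, and both differences are worth noting.

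\textbf{Case $q\ge q_\ro$.} You use a second-moment concentration bound on $Y_j=\sum_{|w|=j,\,p_w=1}\mu(I_w^{\reta})^q$. This works: writing $a_w=\mu(I_w^{\reta})^q$ and grouping the $2^{d(j-\lfloor\reta j\rfloor)}$ cubes above each $v\in\Sigma_{\lfloor\reta j\rfloor}$, one finds $\mathrm{Var}(Y_j)/\mathbb E[Y_j]^2\lesssim 2^{-j\eta\,(d(1-\ro)+\ro(\tau_\mu(2q)-2\tau_\mu(q)))}$, and since $q\mapsto\tau_\mu(2q)-2\tau_\mu(q)$ decreases to $\sigma_\mu(H_{\min})\ge 0$, the exponent is $\ge d(1-\ro)>0$. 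So Chebyshev plus Borel--Cantelli give $Y_j\ge\tfrac12\mathbb E[Y_j]$ for all large $j$ almost surely. The paper instead fixes $H=\tau_\mu'(q)$, uses the auxiliary Gibbs measure $\mu_H$ of Proposition~\ref{proplast} to exhibit $\gtrsim 2^{\lfloor\reta j\rfloor(\tau_\mu^*(H)-\ep)}$ cubes $I_u$ at scale $\lfloor\reta j\rfloor$ with $\mu(I_u)\ge 2^{-\lfloor\reta j\rfloor(H+\ep)}$, and then invokes Lemma~\ref{lem1} to populate each with survivors. Your argument is more elementary (no $\mu_H$), at the cost of the variance computation above.

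\textbf{Case $q<0$.} You control the $\mu(I_w^\ro)^q$-weighted mass of ``bad'' cubes directly via Markov and Borel--Cantelli, exploiting that the bad event is independent of the deterministic weight $\mu(I_w^\ro)^q$. The paper instead uses a Fubini--Tonelli argument against the measure $\mu_H$ (with $H=\tau_\mu'(q)$) to locate, inside $\gtrsim 2^{\lfloor\ro j\rfloor(\tau_\mu^*(H)-\ep)}$ well-chosen cubes $I_u$, at least $2^{d(j-\lfloor\ro j\rfloor)(1-\ep)}$ subcubes $I_w$ with no early survivor in $3I_w$. Again your approach avoids $\mu_H$ and is somewhat more direct. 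One small correction: since $\tau_{\Mr,j}$ is built from $\wMr(I_w)=\max_{I'\in\mathcal N(I_w)}\Mr(I')$, your notion of ``good'' must forbid early survivors in $3I_w$ rather than just in $I_w$; the probability bound $\mathbb P(\text{bad})\lesssim 2^{-d\ep j}$ only changes by a factor $3^d$, so the rest of your argument is unaffected.
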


\begin{proof}

For $j\geq 1$, one estimates the sum
$ \sum_{w\in \Sigma_j} \wMr( I_w)^q$.

\mk

 {\em (i)  Lower bound for $\tau_{\Mr}$:}

\mk
 {\bf $\bullet$  Case   $q<0$:}  using the lower bound in item (1) of Lemma \ref{lem3bis}, one has
\begin{eqnarray*}
 \sum_{w\in \Sigma_j} \wMr(  I_w)^q &  \leq     \sum_{w\in \Sigma_j} \max(\Mr(I): I\in \mathcal{N}(I_j(x)))^q  \leq  3^{d} \sum_{w\in \Sigma_j}   \mu(I^\ro_w)^{q} 2^{-q|w|\widetilde \ep_{|w|}}.
 \end{eqnarray*}
 
 In the last sum, each interval $I_{w'}$, for $w'\in \Sigma_{\lfloor j\ro \rfloor}$, appears as $I_w^\ro$ in $2^{d(j-{\lfloor j\ro\rfloor})}$ terms. So
 \begin{eqnarray*}
 \sum_{w\in \Sigma_j} \wMr(  I_w)^q &  \leq   C 2^{d(j-{\lfloor j\ro \rfloor})} \sum_{w' \in \Sigma_{\lfloor j\ro \rfloor} }   \mu(I_{w'})^{q} 2^{-qj\widetilde \ep_{j}} =  C 2^{d(j-{\lfloor j\ro \rfloor})}  2^{-{\lfloor j\ro \rfloor}\tau_{\mu,j}(q)}.
 \end{eqnarray*}
Letting $j$ go to infinity yields  $\tau_{\Mr}(q) \geq  d(\ro-1) + \ro \tau_{\mu}(q)$.  

\mk

\sk {\bf $\bullet$  Case $q>0$:}      The same arguments as in Proposition \ref{propupper} can be adapted.   By \ref{lem1},  for every word $w\in \Sigma_j$, the value of $\wMr(I_w)$ is reached for one word $w' $ whose length is between $j$ and $j/(\eta-\ep_j)$, and  each word $w'  \in \mathcal{S}_{j'}(\eta)$ with $j  \leq j' \leq j/(\eta-\ep_j)$ may contribute to at most $3^{d}$ values of $\wMr(I_w)$ for  $|w|=j$. Hence   the upper bound in item  (1) of Lemma \ref{lem3bis} gives 
\begin{eqnarray*}
 \sum_{w\in \Sigma_j} \ \wMr (I_w)^q &  \leq  & 3^{d}  \sum_{j'=j}^{\lfloor j/(\eta-\ep_j)\rfloor } \sum_{w\in \mathcal{S}_{j'}(\eta) }  \mu(I_w^{\ro\eta} )^q  \leq 3^{d}   \sum_{j'=j}^{\lfloor j/(\eta-\ep_j)\rfloor } \sum_{w\in\Sigma_{j'}}   p_w\mu(I^{\ro\eta} _w) ^{q}
   \end{eqnarray*}
Taking expectation, one gets  
$$
\mathbb{E} \Big ( \sum_{w\in \Sigma_j} \  \wMr (I_w)^q\Big )\le 3^{d}   \sum_{j'=j}^{\lfloor j/(\eta-\ep_j)\rfloor } \sum_{w\in\Sigma_{j'}}   2^{-dj'(1-\eta)}\mu(I^{\ro\eta} _w) ^{q}.
$$
As before, each cube $I_{w'}$, for $w'\in \Sigma_{\lfloor {\ro\eta}  j'\rfloor }$ and $j  \leq j' \leq j/(\eta-\ep_j)$, contains  at most  $3^{d}   \times 2^{d(j'-\lfloor j'{\ro\eta}) \rfloor }$  dyadic cubes   of generation $2^{j'}$. Thus,
\begin{eqnarray}
\nonumber 
 \mathbb{E} \Big ( \sum_{w\in \Sigma_j}  \wMr (I_w)^q\Big )&  \leq  &    3^d   \sum_{j'=j}^{\lfloor j/(\eta-\ep_j)\rfloor } \sum_{w\in \Sigma_{[j'{\ro\eta} ]}}   2^{  d(j'-\lfloor j'{\ro\eta}) \rfloor }    2^{-dj'(1-\eta)} \mu(I_w)^{q} \\
\label{malE1ro}  & \leq &  C \sum_{j'=j}^{\lfloor j/(\eta-\ep_j)\rfloor }   2^{d \eta (1-\ro) j'}  2^{- {\lfloor j' \ro\eta \rfloor }\tau_{\mu,{[j'\ro\eta]}}(q)  }   .
 \end{eqnarray}
Again, the behavior of the last sum depends on the value of $\tau_{\mu}(q) $. 

\sk

\sk {\bf $\bullet$  $0<q<q_\rho$:} since by \eqref{deftaurho} one has  $d(\ro-1)+\ro\tau_\mu(q_\ro)=0$, one sees that $d(\ro-1)+\ro\tau_\mu(q)<0$.  Fixing  $s<d(\ro-1)+\ro\tau_\mu(q)   $ and $\ep = (d(\ro-1)+\ro\tau_\mu(q) -s)/3$.  Recalling \eqref{lim-tau}, for $j'$ sufficiently large, $|\tau_{\mu,[j'\ro\eta]}(q)  - \tau_\mu(q)|\leq \ep$, so that 
 \begin{eqnarray*}
 \mathbb{E} \Big ( \sum_{w\in \Sigma_j}  \wMr (I_w)^q\Big )    & \leq  & C   \sum_{j'=j}^{\lfloor j/(\eta-\ep_j)\rfloor } 2^{- j'\eta ( d(\ro-1)+\ro\tau_\mu(q) )- \ep)  } \leq C 2^{-  j\eta/(\eta-\ep_j) ((d(\ro-1)+\ro\tau_\mu(q) ) - \ep)  } .
\end{eqnarray*}
Then,  as before,
$$\mathbb{E} \Big (\sum_{j\ge 1} 2^{js} \sum_{w\in \Sigma_j} \ \wMr(I_w)^q\Big ) \leq  C \sum_{j\ge 1} 2^{j(s-  \eta/(\eta-\ep_j) (d(\ro-1)+\ro\tau_\mu(q)  )- \ep)   ) }  <\infty,$$ 
from which we deduce that, almost surely,
$$\sum_{j\ge 1} 2^{js} \sum_{w\in \Sigma_j} \ \wMr1(I_w)^q <\infty.$$
 Consequently, $\tau_{\Mr}(q) \geq  s$, and letting $s$ tend to $d(\ro-1)+\ro\tau_\mu(q) $ yields the result.

\sk

\sk {\bf $\bullet$  $q>q_\ro$:}    Fix  $0< s<d(\ro-1)+\ro\tau_\mu(q)  $,  $\ep = (d(\ro-1)+\ro\tau_\mu(q) -s)/3$ and $j'$ large.  Since $d(\ro-1)+\ro\tau_\mu(q)  >0$, the sum in \eqref{malE1ro}  is bounded above by the therm $j'=j$, and   
 $$
 \mathbb{E} \Big ( \sum_{w\in \Sigma_j}  \wMr (I_w)^q\Big )     \leq C 2^{-  j\eta (d(\ro-1)+\ro\tau_\mu(q) - \ep)  } .
$$

 The same conclusion as above gives   $\tau_{\Mr}(q) \geq  \eta (d(\ro-1)+\ro\tau_\mu(q) ) $, almost surely.

\sk

 The case where $\tau_{\mu}(q) =0$ is obtained by continuity.  

\mk

{\em (ii) Upper bound for $\tau_{\Mr}$:}
 \mk
 
$\bullet$  Case  {\bf   $q\ge 0$:} We show that with probability 1, for all $q\ge 0$,
$$
\tau_{\Mr}(q)\le \min (\eta(d(\varrho-1)+ \varrho\tau_\mu(q)), d(\varrho-1)+ \varrho\tau_\mu(q))
=\begin{cases}
\eta(d(\varrho-1)+ \varrho\tau_\mu(q))&\text{if }q\le q_\varrho\\
d(\varrho-1)+ \varrho\tau_\mu(q)&\text{if }q\ge q_\varrho.
\end{cases}
$$
By Lemma~\ref{lem3bis},  with probability 1, $\Mr(I_w)\ge \mu(I_w^\varrho)2^{-j\widetilde \epsilon_j}$ for all $w\in \Sigma_j$, $j\ge 1$. Consequently, for every $q\ge 0$
\begin{align*}
\sum_{w\in \Sigma_j} \wMr(I_w)^q \ge \sum_{w\in \Sigma_j} \mu(I_w^\varrho)^q2^{-qj\widetilde \epsilon_j}.
\end{align*}
By the arguments already used before, each $I_{w'}$, for $w'\in \Sigma_{\lfloor \ro j\rfloor}$, appears $2^{d(j- \lfloor \ro j\rfloor)}$ times in the above sum. Hence
\begin{align*}
\sum_{w\in \Sigma_j} \wMr(I_w)^q \ge  \sum_{w'\in \Sigma_{\lfloor \varrho j\rfloor }} \mu(I_{w'})^q 2^{d(j-\lfloor \varrho j\rfloor)}2^{-qj\widetilde \epsilon_j}  = 2^{-\lfloor \varrho j\rfloor\tau_{\mu,\lfloor \varrho j\rfloor }(q) +d(j-\lfloor \varrho j\rfloor) - qj\widetilde \epsilon_j} .
\end{align*}
Letting $j$ tend to infinity yields  $\tau_{\Mr}(q)\le d(\varrho-1)+ \varrho\tau_\mu(q)$ for all $q\ge 0$.

\mk
 
 To get the other bound, let $H=\tau_\mu'(q)$. Consider the measure $\mu_H$ provided by Proposition~\ref{proplast} as in Lemma~\ref{lem1}. Since $\mu_H$ is exact dimensional and supported on $E_\mu(H)$, and $\mu$ is doubling, given $\ep>0$, for $J$ large enough, there is a subset $\Sigma_J(H,\ep)$ of $\Sigma_J$ of cardinality at least $2^{J(\tau_\mu^*(q)-\ep)}$ such that for all $u\in  \Sigma_J(H,\ep)$ one has $2^{-J(H+\ep)}\le \mu(I_u)\le 2^{-J(H-\ep)}$. 
 
 We now use  Lemma~\ref{lem1} and the sequence   $(\ep_j)_{j\ge 1}$ defined therein (which does not depend on $q$). With probability 1, when $j$ is large enough, every word $v \in \Sigma_{\lfloor(\eta-\ep_j)j\rfloor}$  contains at least one  $w_v\in\Sigma_j$ such that  $p_{w_v}=1$. Hence, for $j$ large enough, taking $J=\lfloor \varrho \eta j\rfloor$, fixing $u\in \Sigma_J(H,\ep)$, this applies to each element $v \in \Sigma_{\lfloor(\eta-\ep_j)j\rfloor}$ such that $I_v\subset I_u$: $I_v$  contains a subcube   $I_{w_v}$ with   $w_v\in\Sigma_j$ and  $p_{w_v}=1$. 
 In particular, $\wMr(I_{w_v})\ge \mu(I_{w_v}^{\ro\eta})$. Also, $I_{w_v}^{\ro\eta} \in \Sigma_J$, so  $I_{w_v}^{\ro\eta} = I_u$,  and $\wMr(I_{v})\ge \mu(I_u)\geq 2^{-J(H+\ep)}= 2^{-\ro \eta j (H+\ep)}$.
 
 There are $2^{d(\lfloor(\eta-\ep_j)j\rfloor-J)} = 2^{d(\lfloor(\eta-\ep_j)j\rfloor-\lfloor \ro\eta j\rfloor )}$ such disjoint  subcubes $I_v$. These observations and the fact that $q\ge 0$ imply that 
\begin{align*}
 \sum_{w\in \Sigma_j} \wMr(I_w)^q  & \ge  \sum_{u\in \Sigma_J(H,\ep)} \ \  \sum_{v \in \Sigma_{\lfloor(\eta-\ep_j)j\rfloor}: I_v\subset I_u}   \ \ \sum_{w \in \Sigma_j : I_{w}\subset I_v \mbox {and } p_{w_v}=1} \wMr(I_{w})^q\\
 & \ge  2^{\lfloor \ro\eta j\rfloor (\tau_\mu^*(q)-\ep)}  2^{d(\lfloor(\eta-\ep_j)j\rfloor-\lfloor \ro\eta j\rfloor )} 2^{-\ro \eta j (H+\ep)}.
 \end{align*}
 This yields  $\tau_{\Mr}(q)\le \eta(d(\ro-1)+\varrho (qH-\tau_\mu^*(q)))+O(\ep)$. Since  $qH-\tau_\mu^*(q)=\tau_\mu(q)$ and $\ep$ is arbitrary, we conclude that with probability 1, $\tau_{\Mr}(q)\le  \eta(d(\ro-1)+\varrho\tau_\mu(q))$ for all $q\ge 0$.
 \mk

 {\bf $\bullet$  Case   $q<0$:}  Let us make two preliminary observations. 
 
 First,  if $1<\gamma<1/\eta$, for all $j\ge 1$ and $w\in \Sigma_j$,  consider the event  $\mathcal E(I_w)=\{\exists\,  v\in \bigcup_{k=j}^{\lfloor \gamma j\rfloor}\Sigma_k: I_v\subset 3I_w\text{ and }p_v=1\}$. When $\mathcal E(I_w)$ does not hold for some $w\in \Sigma_j$, all the $\mu(I_v)$  are put to zero by the sampling process, for $I_v\subset3 I_w$ and $|v| \in \{j+1,..., \gamma j\}$.  So intuitively, the corresponding surviving weight $\wMr(I_w)$ will be small. By independence of the $p_v$'s,  
 \begin{align}
\label{proba-ew}
 \mathbb P(\mathcal E(I_w))=1-\Big (\prod_{k=j}^{\lfloor \gamma j\rfloor} (1-2^{-d(1-\eta)k})^{2^{d(k-j)}}\Big )^{3^d}\le 1- \exp(-C 2^{-dj(1-\gamma\eta)})\le C 2^{-dj(1-\gamma\eta)}
 \end{align}
for some $C>0$ depending on $(d, \eta, \gamma)$.

 Next, we are interested in counting how many times $\mathcal E(I_w)$ does not hold for $I_w \in \Sigma_{j'} $ inside a given cube $I \in \Sigma_J$ with $J\leq {j'}$. For this,  fix   $\ep\in (0,1)$ and $\theta>1$, as well as a Borel probability measure $\nu$ on $[0,1]^d$. It is easily checked that by combining \eqref{proba-ew}, the Markov inequality and the Fubini-Tonelli Theorem,  
 \begin{align}
\nonumber &
\sum_{J\ge 1} \ \sum_{{j'} \ge \theta J}\mathbb{E}\left(\nu\left(\left\{x\in[0,1]^d: \ \mathcal{L}^d(\{y\in I_J(x): \mathcal E(I_{j'} (y))\text{ holds }\})\ge 2^{-dJ} (1-2^{-d({j'}-J)\ep})\right\}\right)\right )\\
&\le \sum_{J\ge 1} \ \sum_{{j'}\ge \theta J}(1-2^{-d({j'}-J)\ep})^{-1} C 2^{-d{j'}(1-\gamma\eta)}<\infty.
\label{eq-fin2}
\end{align}
Now, for $q<0$, let $H=\tau_\mu'(q)$ and take for $\nu$ the measure $\mu_H$ of Proposition~\ref{proplast}.  By the Borel-Cantelli Lemma and \eqref{eq-fin2}, with probability 1, for $\mu_H$-almost every $x$, for $J$ large enough, for all $j\ge \theta J$,  
$$\mathcal{L}^d(\{y\in I_J(x): \mathcal E(I_{j'}(y))\text{ does not hold }\})\ge 2^{-dJ}2^{-d({j'}-J)\ep}=2^{d({j'}-J)(1-\ep)}2^{-d{j'}}.$$
Hence for every $j'\geq \theta J$,  there exists a subset $\Sigma_{j'}(I_J(x)))$ of $\Sigma_{j'}$ of cardinality at least $2^{\lfloor d ({j'}-J)(1-\ep)\rfloor}$ such that for all $w\in \Sigma_{j'}(I_J(x))$, $I_w\subset I_J(x)$ and $\mathcal{E}(I_w)$ does not hold.  
 
 Take $\theta= (1+\ro^{-1})/2$, $\ep\in(0,1\land (\eta^{-1}-1)/2)$, and $\gamma=1/\eta-\ep$.  Remark that with this choice of parameter, if $J=\lfloor \ro j\rfloor$, then   $j \geq \ro^{-1} J >\theta j$, so the previous conclusion holds for $j'=j$.

 Combining what just precedes with the properties of $\mu_H$ already used in the case $q\ge 0$, one obtains that for all $J$ large enough of the form $J=\lfloor \ro j\rfloor$, there exists:
 \begin{itemize}
 \item a subset $\Sigma_J(H,\ep)$ of $\Sigma_J$ of cardinality at least $2^{J(\tau_\mu^*(q)-\ep)}$ such that for all $u\in  \Sigma_J(H,\ep)$ one has $\mu(I_u)\le 2^{-J(H-\ep)}$, 
 \item
 for all $u\in  \Sigma_J(H,\ep)$, there is    a subset $\Sigma_j(I_u)$ of $\Sigma_j$ of cardinality at least $2^{\lfloor d(j-J)(1-\ep)\rfloor}$ such that for all $w\in \Sigma_j(I_u)$, $I_w\subset I_u$ and $\mathcal{E}(I_w)$ does not hold. 
   \end{itemize}
   
   Hence for each $w\in \Sigma_j(I_u)$,  the dyadic cubes $I_v$ contained in $3I_w$ such that $p_v=1$ are of generation at least $\lfloor(\eta^{-1}-\ep)j\rfloor$. For such surviving vertices $v$, their generation $g(v)$ satisfies $ \lfloor \ro\eta  g(v)\rfloor \geq \ro  \eta  \lfloor(\eta^{-1}-\ep)j\rfloor  \geq  j(\ro  -C'\ep) $,   for some constant $C'$ depending on  $\eta$ and $\mu$. So  $I^{\rho\eta}_v \subset 
 I_u^{  (\rho-C'\ep)/\ro}$, and the doubling property \eqref{quasib} of $\mu$  together with \eqref{uniform} imply that    $\mu(I^{\rho\eta}_v)\le \mu(I_u)2^{C''j\ep}$ for some constant $C''$.  Hence $\widetilde \Mr (I_w)\le \mu(I_u)2^{C'j\ep}\le 2^{-J(H-\ep)} 2^{C''j\ep}$.

 Putting everything together,  in the case where $q<0$, one finally gets  
  \begin{align}
\nonumber 
 \sum_{w\in \Sigma_j} \wMr(I_w)^q & \ge   \sum_{u\in \Sigma_J(H,\ep)} \ \  \sum_{w  \in \Sigma_j(I_u) }   \ \   \wMr(I_{w})^q\\    
 & \geq 2^{J(\tau_\mu^*(q)-\ep)} 2^{\lfloor d(j-J)(1-\ep)\rfloor}(2^{-J(H-\ep)} 2^{C''j\ep})^q.
\end{align}
Recalling that $J=\lfloor \ro j\rfloor$,  one deduces that $\tau_{\Mr}(q)\le d(\ro-1)+\ro(qH-\tau_\mu^*(H))+O(\ep)=d(\ro-1)+\ro\tau_\mu(q)+O(\ep)$. Since $\ep$ is arbitrary, this yields the desired upper bound, for all $q<0$, almost surely.

 \sk
 
 Finally, $\tau_{\Mr}$ and $\tau_\mu$ being continuous functions, all the previous inequalities relating these functions hold almost surely for all    $q<0$.
 \end{proof}

Next  proposition  entails the failure of the multifractal formalism over    subsets of~$\mathrm{dom}(\sigma_{\Mr})$. 

\begin{proposition}
Recall the definifion of $I_{\ro,\eta}$ in Theorem~\ref{main2}. With probability 1, the multifractal formalism holds over $I_{\ro,\eta}$ and it fails over $\mathrm{dom}(\sigma_{\M_\ro})\setminus I_{\ro,\eta}$. 
\end{proposition}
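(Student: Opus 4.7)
The plan is to compute the Legendre transform $\tau_{\M_\ro}^*$ explicitly from Proposition~\ref{tauMrho} and then to compare it piecewise with the spectrum $\sigma_{\M_\ro} = D_{\mu,\ro,\eta}$ on $\mathrm{dom}(\sigma_{\M_\ro})$. Since the inequality $\sigma_{\M_\ro}(H) \le \tau_{\M_\ro}^*(H)$ from \eqref{formalism-1} is automatic, the formalism holds at $H$ precisely when these two quantities coincide, so the task reduces to identifying the set of equality points.

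The function $\tau_{\M_\ro}$ is continuous at $q_\ro$ (since $\tau_\mu(q_\ro) = d(1-\ro)/\ro$ forces $\tau_{\M_\ro}(q_\ro) = 0$) but exhibits a first-order phase transition there, with left derivative $\ro H_\ro$ and strictly smaller right derivative $\ro\eta H_\ro$. Its Legendre transform therefore reads
$$
\tau_{\M_\ro}^*(H) = \begin{cases}
\eta\bigl(d(1-\ro) + \ro\sigma_\mu(H/(\ro\eta))\bigr) & \text{if } H \in [\ro\eta H_{\min},\,\ro\eta H_\ro], \\
q_\ro H & \text{if } H \in [\ro\eta H_\ro,\,\ro H_\ro], \\
d(1-\ro) + \ro\sigma_\mu(H/\ro) & \text{if } H \in [\ro H_\ro,\,\ro H_{\max}],
\end{cases}
$$
and $-\infty$ elsewhere, the middle interval being the affine bridge forced by the phase transition; continuity at the junctions is guaranteed by \eqref{formula-rho}, namely $\delta_\ro(H_\ro)\sigma_\mu(H_\ro) = \ro q_\ro H_\ro$.

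Next I would carry out the piecewise comparison with $D_{\mu,\ro,\eta}$ in each of the three cases of Definition~\ref{defDetarho}. Equality $D_{\mu,\ro,\eta} = \tau_{\M_\ro}^*$ is immediate on $[\ro\eta H_{\min},\ro\eta H_\ro]$ (Case 1) or on $[\ro\eta H_{\ro,\eta},\ro\eta H_\ro]$ (Case 2a); the linear middle piece $\sigma_\mu(H_\ro)H/\theta_\ro(H_\ro) = q_\ro H$ of $D_{\mu,\ro,\eta}$ on $[\ro\eta H_\ro,\theta_\ro(H_\ro)]$ coincides with the affine bridge, since $\theta_\ro(H_\ro) \le \ro H_\ro$. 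Strict inequality $D_{\mu,\ro,\eta} < \tau_{\M_\ro}^*$ on the complementary intervals is then obtained from the strict concavity of $\sigma_\mu$ on $[H_{\min},\tau_\mu'(0)]$ and of $H \mapsto \sigma_\mu(\theta_\ro^{-1}(H))$ (Proposition~\ref{regularityDetarho}): on $(\theta_\ro(H_\ro),\ro H_\ro]$ in Cases 1 and 2a, $\sigma_\mu(\theta_\ro^{-1}(H))$ lies strictly below $q_\ro H$; on $[\ro H_\ro,\ro H_{\max}]$, one has $d(1-\ro) + \ro\sigma_\mu(H/\ro) > \sigma_\mu(H/\ro)$ except at the single point $H = \ro\tau_\mu'(0)$ where $\sigma_\mu(\tau_\mu'(0)) = d$; on $[\ro\eta H_{\min},\ro\eta H_{\ro,\eta})$ in Cases 2a and 2b, the inequality $\sigma_\mu(H/\ro\eta) < \eta(d(1-\ro) + \ro\sigma_\mu(H/\ro\eta))$ reduces, via \eqref{defHmurho}, to $\sigma_\mu(H/\ro\eta) < \sigma_\mu(H_{\ro,\eta})$ and holds strictly by monotonicity. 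Finally, in Case 2b with $H_{\ro,\eta} > H_\ro$, failure at $\ro\eta H_{\ro,\eta}$ itself is obtained from the fact that $H \mapsto (d(1-\ro) + \ro\sigma_\mu(H))/H$ attains its unique maximum $q_\ro$ at $H_\ro$ (Section~\ref{sec5-1}), so $\sigma_\mu(H_{\ro,\eta})/(\ro\eta H_{\ro,\eta}) < q_\ro$.

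The main obstacle is the tangency analysis at the corner $H = \theta_\ro(H_\ro)$, where the strictly concave branch $\sigma_\mu(\theta_\ro^{-1}(\cdot))$ glues to the affine piece $q_\ro H$: a direct calculation of $\theta_\ro'(H_\ro)$ in terms of $\delta_\ro$ and $\sigma_\mu'$, combined with $\sigma_\mu'(H_\ro) = q_\ro$ and \eqref{formula-rho}, shows that both the value and the slope match at $\theta_\ro(H_\ro)$. Hence the strict inequality $D_{\mu,\ro,\eta}(H) < \tau_{\M_\ro}^*(H)$ for $H > \theta_\ro(H_\ro)$ is a consequence of strict concavity rather than of a cruder size comparison, and this is the step requiring the most delicate bookkeeping.
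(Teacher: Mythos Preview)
Your overall approach matches the paper's: compute $\tau_{\M_\ro}^*$ from Proposition~\ref{tauMrho} and compare it piecewise with $D_{\mu,\ro,\eta}$. The Legendre transform you obtain is correct, as are the equality regions and the arguments on $[\ro\eta H_{\min},\ro\eta H_{\ro,\eta})$ (Cases 2a/2b), on $(\theta_\ro(H_\ro),\ro H_\ro]$ via tangency plus strict concavity, and on $[\ro\tau_\mu'(0),\ro H_{\max}]$ via $d(1-\ro)>0$.

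There is, however, a genuine gap on the subinterval $(\ro H_\ro,\ro\tau_\mu'(0))$ in Cases~1 and~2a. On this range $\tau_{\M_\ro}^*(H)=d(1-\ro)+\ro\sigma_\mu(H/\ro)$, while $D_{\mu,\ro,\eta}(H)=\sigma_\mu(\theta_\ro^{-1}(H))$, \emph{not} $\sigma_\mu(H/\ro)$. Your stated inequality $d(1-\ro)+\ro\sigma_\mu(H/\ro)>\sigma_\mu(H/\ro)$ is true but does not bound $D_{\mu,\ro,\eta}(H)$, because $\theta_\ro^{-1}(H)\ge H/\ro$ and $\sigma_\mu$ is increasing there, so $\sigma_\mu(\theta_\ro^{-1}(H))\ge\sigma_\mu(H/\ro)$; the comparison goes the wrong way. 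Your alternative suggestion that tangency at $\theta_\ro(H_\ro)$ plus strict concavity of $D_{\mu,\ro,\eta}$ handles \emph{all} $H>\theta_\ro(H_\ro)$ also fails here: once $H>\ro H_\ro$, $\tau_{\M_\ro}^*$ is itself strictly concave and tangent to $D_{\mu,\ro,\eta}$ at $\ro\tau_\mu'(0)$ (both have slope $0$ there), so a pure concavity comparison is inconclusive.

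The paper closes this gap by a direct computation exploiting the monotonicity of the map \eqref{funcro} on $[H_\ro,H_{\max}]$: writing $\sigma_\mu(\theta_\ro^{-1}(H))=\dfrac{\theta_\ro^{-1}(H)}{\delta_\ro(\theta_\ro^{-1}(H))}\cdot\dfrac{d(1-\ro)+\ro\sigma_\mu(\theta_\ro^{-1}(H))}{\theta_\ro^{-1}(H)}$ via \eqref{rewriting}, using that the second factor is strictly smaller than $\dfrac{d(1-\ro)+\ro\sigma_\mu(H/\ro)}{H/\ro}$ since $H/\ro<\theta_\ro^{-1}(H)$ on $[H_\ro,\tau_\mu'(0)]$, and identifying the first factor with $H/\ro$. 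You should insert this (or an equivalent) argument to cover $(\ro H_\ro,\ro\tau_\mu'(0))$.
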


\begin{proof}
It follows from Proposition~\ref{tauMrho} that, with probability 1, 
\begin{equation}
\label{taumuret}
\tau_{\Mr}^*(H)= \begin{cases}
\eta d (1-\ro) +\eta \ro \si_\mu(H/\ro\eta)& \mbox{ when }   H\leq \ro\eta H_\ro,\\
\ \ \  \ \ \ \ \ \frac{ \si_\mu(H_\ro)}{\theta_\ro(H_\ro)}H& \mbox{ when } \ro\eta H_\ro <H\leq \ro H_\ro,\\
\ \ \ d(1-\varrho)+\varrho\sigma_\mu(H/\varrho) & \mbox{ when } H > \ro H_\ro .
\end{cases}
\end{equation}

\mk

{\bf Case 1.}
One compares \eqref{taumuret} with \eqref{spec1}. 

The two formulas coincide when $H_{\min}\leq H\leq \theta_\ro(H_\ro)$, and at $H=\ro\tau_\mu'(0)$.

When $  \theta_\ro(H_\ro) <H \leq \ro H_\ro$,  $\tau_{\Mr}^*$ stays linear but $\si_{\Mr}$ becomes stricly concave, so $\si_{\Mr} < \tau_{\Mr}^*$.

When  $   \ro H_\ro  < H \leq \ro \tau_\mu'(0)$,     $ H_\ro\leq H /\ro \leq  \theta_\ro^{-1}(H)$, and since the mapping \eqref{funcro} is decreasing on $[H_\ro,H_{\max}]$, one has 
\begin{eqnarray*}
 \si_\mu(\theta_\ro^{-1} (H) )  & = &  \frac{d(1-\ro)+\ro \si_\mu(\theta_\ro^{-1} (H) )}{\delta_\ro (\theta_\ro^{-1} (H)) }=  \frac{ \theta_\ro^{-1} (H) }{\delta_\ro (\theta_\ro^{-1} (H)) } \frac{d(1-\ro)+\ro \si_\mu(\theta_\ro^{-1} (H) )}{\theta_\ro^{-1} (H) } \\
   & <&  \frac{ \theta_\ro^{-1} (H) }{\delta_\ro (\theta_\ro^{-1} (H)) }    \frac{d(1-\ro)+\ro \si_\mu(H/\ro)}{H/ \ro} .
   \end{eqnarray*}
   But  by definition of $\delta_\ro$, $ \frac{ \theta_\ro^{-1} (H) }{\delta_\ro (\theta_\ro^{-1} (H)) }    = \frac{ \theta_\ro(  \theta_\ro^{-1} (H))}{\ro} = H/\ro$, so $ \si_\mu(\theta_\ro^{-1} (H) )  <d(1-\ro)+\ro \si_\mu(H/\ro) $.

When $H> \ro \tau_\mu'(0)$, by convexity $\si_\mu(H/\ro) < d(1-\ro)+\ro\si_\mu(H/\ro)$.

\mk

{\bf Case 2.a.}
The formulas \eqref{taumuret} with \eqref{spec2} do not coincide any more when $H_{\min}\leq H <  \ro\eta H_{\ro\eta}$, and the other regions are similar to Case {\bf 1}. 

\mk

{\bf Case 2.b.}  This case is simpler and follows from the previous ones. 
 \end{proof}


\bibliographystyle{plain}
\bibliography{Biblio_BME}

\end{document}